\documentclass[a4paper, 10pt, oneside]{amsart}
%%%%%%%%%%%%%%%%%%%%%%%%%%%%%%%%%%%%%%%%%%%%%%%%%%%%%%%%%%%%%%%%%%%%%%%%%%%%%%%%%%%%%%%%%%%%%%%%%%%%%%%%%%%%%%%%%%%%%%%%%%%%%%%%%%%%%%%%%%%%%%%%%%%%%%%%%%%%%%%%%%%%%%%%%%%%%%%%%%%%%%%%%%%%%%%%%%%%%%%%%%%%%%%%%%%%%%%%%%%%%%%%%%%%%%%%%%%%%%%%%%%%%%%%%%%%

%%% in the end comment these two away and pagebackref=false
%\usepackage[notcite, notref]{showkeys}
\usepackage[dvipsnames]{xcolor}
%%%%%%
\usepackage{latexsym,amssymb}
\usepackage[english]{babel}
\usepackage[utf8]{inputenc}
\usepackage{amsmath,amsthm,bm}
\usepackage[T1]{fontenc}
\usepackage{geometry}
\usepackage{color}
\usepackage[colorlinks,pdfpagelabels,pdfstartview = FitH,bookmarksopen
= true,bookmarksnumbered = true,linkcolor = blue,plainpages =
false,hypertexnames = false,citecolor = red,pagebackref=false]{hyperref}

\usepackage{amsbsy}
\usepackage{amstext}
\usepackage{amssymb}
\usepackage{esint}
\usepackage{stmaryrd}
\usepackage{graphicx}

\setcounter{MaxMatrixCols}{10}

\allowdisplaybreaks
\sloppy
\newtheorem{theorem}{Theorem}
\newtheorem{lemma}[theorem]{Lemma}
\newtheorem{definition}[theorem]{Definition}

\newtheorem{corollary}[theorem]{Corollary}

\theoremstyle{definition}
\newtheorem{remark}[theorem]{Remark}
\numberwithin{theorem}{section}
\numberwithin{equation}{section}

\def\N{\mathbb{N}}
\def\R{\mathbb{R}}

\renewcommand{\d}{\:\! \mathrm{d}}
\newcommand{\dx}{\mathrm{d}x}

\newcommand{\dt}{\mathrm{d}t}
\newcommand{\ds}{\mathrm{d}s}

\renewcommand{\epsilon}{\varepsilon}

\DeclareMathOperator{\Div}{div}

\DeclareMathOperator{\dist}{dist}
\DeclareMathOperator{\loc}{loc}

\renewcommand{\epsilon}{\varepsilon}
\newcommand{\eps}{\varepsilon}
\renewcommand{\rho}{\varrho}

\def\eqn#1$$#2$${\begin{equation}\label#1#2\end{equation}}

\newcommand{\mollifytime}[2]{[\![ #1 ]\!]_{#2}}
\newcommand{\norm}[1]{\lVert \kern 0.6pt #1 \kern 0.6pt \rVert}

%Vector-valued case:

\def\Xint#1{\mathchoice
    {\XXint\displaystyle\textstyle{#1}}%
    {\XXint\textstyle\scriptstyle{#1}}%
    {\XXint\scriptstyle\scriptscriptstyle{#1}}%
    {\XXint\scriptscriptstyle\scriptscriptstyle{#1}}%
    \!\int}
\def\XXint#1#2#3{\setbox0=\hbox{$#1{#2#3}{\int}$}
    \vcenter{\hbox{$#2#3$}}\kern-0.5\wd0}

\def\dashint{\Xint{\raise4pt\hbox to7pt{\hrulefill}}}

\def\XXiint#1#2#3{\setbox0=\hbox{$#1{#2#3}{\iint}$}
    \vcenter{\hbox{$#2#3$}}\kern-0.5\wd0}

\renewcommand{\u}{\boldsymbol{u}}
\renewcommand{\v}{\mathbf{v}}
\renewcommand{\a}{\boldsymbol{a}}
\newcommand{\g}{\boldsymbol{g}}
\newcommand{\A}{\mathbf{A}}

\subjclass[2010]{35B35,35K40,35K55,35K65,35K67}
\keywords{Porous medium type systems, stability}

\author[K. Moring]{Kristian Moring}
\address{Kristian Moring\\
Department of Mathematics and Systems Analysis, Aalto University\\
P.~O.~Box 11100, FI-00076 Aalto, Finland}
\email{kristian.moring@aalto.fi}

\author[R. Rainer]{Rudolf 
Rainer}
\address{Rudolf Rainer\\
%Department of Mathematics, University of Salzburg\\
Fachbereich Mathematik, Universit\"at Salzburg \\
Hellbrunner Str. 34, 5020 Salzburg, Austria}
\email{rudolf.rainer@sbg.ac.at}

%\author[S. Schwarzacher]{Sebastian Schwarzacher}
%\address{Sebastian Schwarzacher\\Katedra matematick\'e analy\'zy, Matematicko-fyzik\'aln\'\i\
%  fakulta Univerzity Karlovy, Sokolovsk\'a 83, 186 75 Praha 8, Czech Republic}
%  \email{schwarz@karlin.mff.cuni.cz}

\begin{document}
\title{Stability for systems of porous medium type}
%\title[Global higher integrability of weak solutions of porous medium systems]{Global higher integrability of weak solutions of\\ porous medium systems}
%\date{\today}

%****************************************************************

%\maketitle

\begin{abstract}
%% 1
%We deal with stability problems with respect to the exponent $m$ for systems of the porous medium type. We present results of both local and global nature. In the global setting we assume that the solution takes prescribed initial and boundary values. Both the degenerate case as well as the singular case are covered.
%% 2
%We deal with an initial and boundary value problem regarding systems of the porous medium type and show that it is stable with respect to $m$. We present results of both local and global nature. The degenerate case as well as the singular case are covered.
%% 3
%Stability properties of weak solutions for systems of porous medium type are established. Regarding the exponent $m$, we consider stability for local case as well as for Cauchy-Dirichlet problems. Both degenerate and singular cases are covered.
%% 4
We establish stability properties of weak solutions for systems of porous medium type with respect to the exponent $m$. Thereby we treat stability for the local case as well as for Cauchy-Dirichlet problems. Both degenerate and singular cases are covered. \\[2ex]
\noindent
\textsc{Keywords.} Porous medium type systems, stability
\end{abstract}

\maketitle

\section{Introduction}

%\medskip
 
%\noindent
%{\bf Acknowledgments.} K.~Moring has been supported by the Magnus Ehrnrooth foundation. 

\noindent
Point of interest is the stability of weak solutions to parabolic systems
\begin{align}\label{intro-eq}
  \partial_t u -\Div \mathbf A \big(x,t,u,D(|u|^{m-1} u )\big) = 0
\end{align}
in a cylindrical domain with respect to the exponent $m$. $\A$ is a vector field whose structural properties are detailed further down. This general type is labelled system of the porous medium type, as it contains as its principal prototype the porous medium equation
\begin{align*}
\partial_t u -\Delta( u^m ) = 0.
\end{align*}

The equation is divided into two regimes: If $0<m<1$ one speaks of the singular or also fast diffusion case, while for $m >1$ one speaks of the degenerate or slow diffusion case. Both cases will be treated, although we will have a restriction in the singular case. In particular, a positive lower bound for $m$ is required. This matches up with regularity results for the porous medium equation, as the same bound appears e.g. in \cite{Boegelein-Duzaar-Scheven:2018} and~\cite[§6.21]{DBGV-book}.

We will answer the question whether weak solutions of \eqref{intro-eq} converge to a solution of the limit problem as the exponent $m$ varies. This ensures that the solutions of the equation are stable under small perturbations of the parameter $m$, which in applications may be known only approximately. In the first part, local convergence will be studied. We assume weak convergence of the sequence of solutions in this case in order to identify the limit. In the second part, we inspect a Cauchy-Dirichlet problem, where the solutions are expected to attain given initial and boundary values.

For the parabolic $p$-Laplace equation the stability question has been treated by Kinnunen and Parviainen in \cite{Kinnunen-Parviainen}. Two ingredients were essential for the proof: for one, the lateral boundary of the cylinder must be sufficiently regular. Furthermore, to overcome the difficulty that weak solutions (to the parabolic $p$-Laplace equation) for different exponents are in different parabolic Sobolev spaces, a global higher integrability result is essential. Somewhat surprisingly, neither of these were needed to complete the proof when considering an equation of the type \eqref{intro-eq}. This could stem from the fact that, in contrast to the parabolic $p$-Laplace equation, the spaces in the porous medium setting are fixed, even when the exponents differ. Even though not needed, the higher integrability can still be applied to obtain better convergence properties for the sequence of solutions and their gradients.

For the proof of the local result, we proceed as follows: By Caccioppoli type estimates we obtain a uniform bound on the norms of the solutions in a reflexive Banach space, which in turn implies the weak convergence of a subsequence. To improve the convergence for the solutions from weak to strong, we use a dual pairing argument which then allows us to use the compactness properties of parabolic Sobolev spaces, more specifically Theorem 3 in \cite{Simon}. To improve the convergence for the gradients, in \cite{Kinnunen-Parviainen} the authors showed that they form a Cauchy sequence in order to avoid testing with the limit function itself. In this case, we are able to show it directly.

In the global case, we apply the local result. It remains to extend the obtained convergences from local to global, which we do by applying a measure theoretic argument: One can observe that strong convergence in $L^1$ or even pointwise a.e. convergence together with boundedness in $L^2$ implies strong convergence in $L^q$ for all $q<2$. We conduct the argument in detail in Lemma \ref{lem:strong-lp-convergence} and then reuse it several times throughout the proofs.

We shall give a brief recap of the recent history in the research of stability questions. Lindqvist studied stability questions for the stationary $p$-Laplace equation in \cite{Lindqvist}, already in 1987. Due to the mentioned difficulties arising from varying Sobolev spaces, the stability problem for parabolic $p$-Laplacian was settled only after higher integrability was proven. First, Kinnunen and Lewis showed the local higher integrability in \cite{Kinnunen-Lewis:1}, which was then extended up to the boundary by Parviainen \cite{Parviainen} in 2009. This allowed Kinnunen and Parviainen to prove the stability for the parabolic $p$-Laplacian \cite{Kinnunen-Parviainen} one year later. Lukkari and Parviainen studied similar stability questions for the parabolic $p$-Laplace in the degenerate case in \cite{Lukkari-Parviainen}. They also took into account measure data at the initial boundary. Regarding equations of the porous medium type, Lukkari inspected nonnegative weak solutions to the model equation in \cite{Lukkari}. He used the specific structure of the model equation, which is not available in our general setting. \\
Further, in \cite{Benilan-Crandall} the theory of nonlinear semigroups is applied to obtain a stability result for an initial-value problem for equations of the form $\partial_t u - \Delta \varphi (u) = 0$ with a non-linearity $\varphi$. By applying the \lq\lq doubling of variables\rq\rq~method of Kruzkov, quantitative stability estimates in the sense of continuous dependencies and error estimates are obtained in \cite{Chen-Karlsen,Cockburn-Gripenberg,Karlsen-Risebro}. \\
Additionally, we mention the following border cases: For stability results for the case $m \rightarrow \infty$, where the limit problem is sometimes termed the mesa problem, we refer to \cite{Benilan-Boccardo-Herrero,Benilan-Igbida,Caffarelli-Friedman}. For $m \rightarrow 0$, where the limit problem is $\partial_t u - \Delta \log u = 0$, we refer to \cite{Dibenedetto-Gianazza-Liao,Hui}. Also worth noting is \cite{Hui-veryfast}, where the limit $m \rightarrow 0^-$ is inspected, so considering the very fast diffusion equation with $m<0$.

%Recently, several advances were achieved regarding the higher integrability for the porous medium equation, but also for a homogeneous doubly nonlinear equation. This was made possible by using a concept called intrinsic geometry, which is centered around cylinders which are intrinsically scaled with respect to $u$ or $Du$. Bögelein, Duzaar, Korte and Scheven started by proving the local higher integrability in the degenerate case in  \cite{Boegelein-Duzaar-Korte-Scheven}. The singular case in the local setting was treated by Bögelein, Duzaar and Scheven in  \cite{Boegelein-Duzaar-Scheven:2018}. Finally, Moring, Scheven, Schwarzacher and Singer were able to extend the result in the degenerate case to the global setting in \cite{Moring-Scheven-Schwarzacher-Singer}.

\medskip

\noindent
{\bf Acknowledgments.} K.~Moring has been supported by the Magnus Ehrnrooth Foundation. R. Rainer has been supported by the FWF-Project P 31956 ``Doubly Nonlinear Evolution Equations''.

\section{Preliminaries}

\subsection{Statement of the local result}
\label{sec:statement} 
\hfill \\[1ex]
We consider porous medium systems of the type 
%\begin{equation}\label{por-med-eq}
%  \partial_t u -\Div \mathbf A(x,t,u,D\power{u}{m}) = 0 \quad\mbox{in $\Omega_T$,}
%\end{equation}
\begin{equation}\label{por-med-eq}
  \partial_t u -\Div \mathbf A(x,t,u,D\u^m) = 0 \quad \text{ in } \Omega_T,
\end{equation}  
in which $\Omega_T := \Omega \times (0,T)$ is a space-time cylinder. $\Omega$ is a bounded open subset of $\R^n$ and $T>0$. We consider $n \geq 2$ and use the abbreviation $\u^m := |u|^{m-1}u$. $\partial_t$ denotes the time derivative, while $D=D_x$ and $\mathrm{div} = \mathrm{div}_x$ denote the derivatives and the divergence with respect to the spatial variable $x$. For open sets $A,B \subset \mathbb R^{n+1}$, we write $A \Subset B$ if $\overline A$ is a compact subset of $B$.

The assumptions on the vector field $\mathbf A\colon \Omega_T\times\R^N\times
\R^{Nn}\to \R^{Nn}$ are as follows. We assume that $\mathbf A$ is a Carathéodory function, i.e. it is measurable with respect to $(x, t) \in \Omega_T$ for all $(u,\xi)\in \R^N\times\R^{Nn}$ and continuous with respect to $(u,\xi)$ for a.e.~$(x,t)\in\Omega_T$. Moreover, we assume that $\mathbf A$ satisfies the following structural conditions with
$0<\nu\le L<\infty$:
\begin{equation}\label{growth}
\left\{
\begin{array}{c}
	\mathbf A(x,t,u,\xi)\cdot\xi \ge \nu|\xi|^2\, ,\\[6pt]
	| \mathbf A(x,t,u,\xi)|\le L|\xi|,
\end{array}
\right.
\end{equation} 
for a.e.~$(x,t)\in \Omega_T$ and any $(u,\xi)\in \R^N\times\R^{Nn}$. We also assume that the vector field is monotone in the sense that for some $\mu \in (0,\infty)$,
\begin{equation} \label{monotone}
\big( \mathbf A(x,t,u,\xi) - \mathbf A(x,t,v,\eta) \big) \cdot (\xi - \eta) \geq \mu |\xi - \eta|^2
\end{equation}
holds true for a.e.~$(x,t)\in \Omega_T$ and for any pairs $(u,\xi), (v,\eta)\in \R^N\times\R^{Nn}$. We work with weak solutions, which we define now.

\begin{definition}\label{def:weak_solution}\upshape
Assume that the vector field $\mathbf A\colon \Omega_T\times \R^N\times\R^{Nn}\to\R^{Nn}$ satisfies \eqref{growth} and \eqref{monotone}. 
We identify a measurable map
$u\colon\Omega_T\to\R^N$ in the class
\begin{equation*}
%	u\in C^0 \big([0,T], L^{m+1}_{\rm loc}(\Omega,\R^N)\big)
%	\quad\mbox{with}\quad 
	\u^m\in L^2_{\loc}\big(0,T;W^{1,2}_{\rm loc}(\Omega,\R^N)\big),
\end{equation*} 
with additional assumption $u \in L^{m+1}_{\loc}(\Omega_T,\R^N)$ in case $m < 1$, as a \textit{weak solution} to the porous medium type system \eqref{por-med-eq} with exponent $m$ if and only if the identity
\begin{align}\label{weak-solution}
	\iint_{\Omega_T}\big[- u\cdot\partial_t\varphi + \mathbf A(x,t,u,D\u^m)\cdot D\varphi\big]\dx\dt
    	= 0
\end{align}
holds true for any testing function $\varphi\in
C_0^\infty(\Omega_T,\R^N)$. 
\qed
\end{definition}

The assumptions on $\varphi$ can be weakened. It suffices that 
%%{\color{red} do we need $u$ bounded? Compare to Bög-Duz-Marc : PM type eq. with measure data p.3286}
\begin{align*}
\varphi \in W^{1,2}(0,T; L^2(\Omega, \mathbb R^N))
	\cap L^2(0,T;W^{1,2}_0(\Omega, \mathbb R^N))
\end{align*}
and $\text{supp } \varphi \Subset \Omega_T$ when $m\geq 1$.
%{\color{red} do we need something like: $\partial_t \varphi$ is zero in lateral boundary, or: is this already true from the fact that $\varphi \in L^2(\ldots W^{1,2}_0)$?}
If $m<1$, we further demand $\partial_t \varphi \in L^{\frac{1+m}{m}}(\Omega_T, \mathbb R^N)$ to ensure the finiteness of the integral of the parabolic part of the equation.% \textcolor{red}{Should we assume more than local integrability in time direction? Otherwise there could arise some technical difficulties with integrals in mollifications (integrals should then be over compact subsets of $(0,T)$) etc..}

\begin{remark} \label{rem:cont_representative_loc}
In Section~\ref{section:loc_continuity} we will prove that a weak solution $u$ according to Definition~\ref{def:weak_solution} has a representative in class $C((0,T); L^{m+1}_{\loc}(\Omega,\R^N))$.
\end{remark}

We denote the critical exponent by $m_c := \frac{(n-2)_+}{n+2}$, where $(n-2)_+ := \max\lbrace n-2,0 \rbrace$. Let $(m_i)$ be a sequence of real numbers in $(m_c,\infty)$ such that $m_i \longrightarrow m \in (m_c,\infty)$ as $i \longrightarrow \infty$. Let further $u_i$ be a weak solution to the Equation~\eqref{por-med-eq} with exponent $m_i$. We assume that there exists a measurable function $u: \Omega_T \to \R^N$, such that as $i \to \infty$,
%
%\begin{equation}\label{assumption:weak-convergence}
%\left\{
%\begin{array}{c}
%	\u_i^{m_i} \rightharpoonup \u^{m}\quad \text{weakly in } L^2_{\loc}(\Omega_T,\R^N), \quad \text{if } m\geq 1 %\, ,\\[6pt]
%	\u_i^{m_i+1} \rightharpoonup \u^{m+1} \quad \text{weakly in } L^1_{\loc}(\Omega_T,\R^N), \quad \text{if } %m_c<m< 1.
%\end{array}
%\right.
%\end{equation} 
%
\begin{equation}\label{assumption:weak-convergence}
	\u_i^{m_i} \rightharpoonup \u^{m}\quad \text{weakly in } L^2_{\loc}(\Omega_T,\R^N).
\end{equation} 
Moreover, if $m \in (m_c,1)$, we make an additional assumption, namely
\begin{equation} \label{assumption:boundedness}
\u_i^{m_i+1} \text{ is bounded in } L^1_{\loc}(\Omega_T,\R^N).
\end{equation}

\noindent
The following is our main result in the local setting. %\textcolor{red}{With current definitions we should probably change these spaces to $L^2(0,T;L^2_{\loc}(\Omega,\R^N))$ and $L^1(0,T;L^1_{\loc}(\Omega,\R^N))$ in order to guarantee that the solution of limit problem satisfies the definition..? Or would it be more convenient to think mollifications as $\mollifytime{u}{h}(x,t) := \frac 1 h \int_a^t e^{\frac{s-t}{h}} v(x,s) \mathrm{d}s$ and $\mollifytime{u}{\bar h}(x,t) := \frac 1 h \int_t^b e^{\frac{t-s}{h}} v(x,s) \mathrm{d}s$ for $0< a \leq t \leq b<T$ in the local case and use local space in time as well..?}

\begin{theorem} \label{theorem}
Let $(m_i)_{i \in \N}$ be a sequence in $(m_c,\infty)$ such that $m_i \longrightarrow m \in (m_c,\infty)$ as $i \longrightarrow \infty$. Let $u_i$ be a weak solution of Equation~\eqref{por-med-eq} with exponent $m_i$ in the sense of Definition~\ref{def:weak_solution}, where the vector field $\mathbf A$ satisfies the growth and monotonicity conditions~\eqref{growth} and~\eqref{monotone}. Furthermore, assume that the assumptions~\eqref{assumption:weak-convergence} and~\eqref{assumption:boundedness} are in force. 
Then, for the function $u$ from \eqref{assumption:weak-convergence}, we have $\u^m \in L^{2}_{\mathrm{loc}}(0,T; W^{1,2}_{\mathrm{loc}}(\Omega,\R^N))$ with
\begin{align*}
\u_i^{m_i} \stackrel{i \rightarrow \infty}\longrightarrow \u^m \quad \text{ in } L^{2}_{\mathrm{loc}}(0,T; W^{1,2}_{\mathrm{loc}}(\Omega,\R^N)).
\end{align*}
Moreover, the limit function $u$ is a weak solution to the Equation~\eqref{por-med-eq} with exponent $m$.

\end{theorem}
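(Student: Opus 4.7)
The plan is to combine four ingredients in sequence: Caccioppoli estimates for uniform bounds, a dual-pairing/Aubin--Lions compactness argument for strong convergence of $u_i$, passage to the limit in the weak formulation using a.e.\ convergence and continuity of $\mathbf{A}$ in its last two slots, and finally the monotonicity condition \eqref{monotone} for the upgrade from weak to strong convergence of the gradients $D\mathbf{u}_i^{m_i}$.

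First I would prove a Caccioppoli-type estimate by testing \eqref{weak-solution} (after Steklov/time-mollification, since $\partial_t u_i$ is not a priori a function) with $\varphi = \mathbf{u}_i^{m_i}\zeta^2$ for $\zeta\in C_c^\infty(\Omega_T)$. The parabolic term produces $|u_i|^{m_i+1}$, which is locally bounded by assumption \eqref{assumption:boundedness} in the singular range and by the weak-$L^2$ bound on $\mathbf{u}_i^{m_i}$ in the degenerate range (via $|u_i|^{m_i+1}=|\mathbf{u}_i^{m_i}|^{(m_i+1)/m_i}$ plus Young). The growth conditions \eqref{growth} then give a uniform $L^2_{\mathrm{loc}}$ bound on $D\mathbf{u}_i^{m_i}$; together with the bound from \eqref{assumption:weak-convergence} this yields, along a subsequence, $D\mathbf{u}_i^{m_i}\rightharpoonup V$ weakly in $L^2_{\mathrm{loc}}$, and the weak convergence assumption identifies $V=D\mathbf{u}^m$, so that $\mathbf{u}^m\in L^2_{\mathrm{loc}}(0,T;W^{1,2}_{\mathrm{loc}})$. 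Simultaneously $\mathbf{A}(x,t,u_i,D\mathbf{u}_i^{m_i})\rightharpoonup \mathbf{A}_\infty$ weakly in $L^2_{\mathrm{loc}}$.

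To pass from weak to strong convergence of $\mathbf{u}_i^{m_i}$ I would run the Aubin--Lions/Simon compactness theorem from \cite{Simon}. The only missing input is a uniform bound on $\partial_t\mathbf{u}_i^{m_i}$ in some negative-order space. This is obtained by a dual pairing: writing the equation \eqref{weak-solution} as $\partial_t u_i = \Div\mathbf{A}(\cdot,u_i,D\mathbf{u}_i^{m_i})$, one gets $\partial_t u_i$ bounded in $L^2(W^{-1,2}_{\mathrm{loc}})$, and an elementary identity transfers a similar (though weaker, involving $|u_i|^{m_i-1}$) control to $\partial_t\mathbf{u}_i^{m_i}$; this is enough for Simon's criterion in a Banach triple with $W^{1,2}_{\mathrm{loc}}\hookrightarrow\hookrightarrow L^2_{\mathrm{loc}}\hookrightarrow W^{-1,2}_{\mathrm{loc}}$. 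We therefore obtain $\mathbf{u}_i^{m_i}\to \mathbf{u}^m$ strongly in $L^2_{\mathrm{loc}}$, and after passing to a further subsequence, a.e.\ in $\Omega_T$. Since the map $w\mapsto|w|^{1/m_i-1}w$ is continuous and $m_i\to m$, this yields $u_i\to u$ a.e., and hence by the Carath\'eodory property of $\mathbf{A}$, $\mathbf{A}(x,t,u_i,\cdot)$ converges in the Nemytskii sense. Passing to the limit in \eqref{weak-solution} for $\varphi\in C_c^\infty$ then identifies $\mathbf{A}_\infty = \mathbf{A}(x,t,u,D\mathbf{u}^m)$ (after the gradient-convergence step below) and shows that $u$ solves \eqref{por-med-eq} with exponent $m$.

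The hardest step, and the place where care is required, is the strong $L^2_{\mathrm{loc}}$-convergence of the gradients. By \eqref{monotone} it suffices to show
\begin{equation*}
I_i:=\iint_{\Omega_T}\bigl(\mathbf{A}(\cdot,u_i,D\mathbf{u}_i^{m_i})-\mathbf{A}(\cdot,u_i,D\mathbf{u}^m)\bigr)\cdot\bigl(D\mathbf{u}_i^{m_i}-D\mathbf{u}^m\bigr)\zeta^2\,\dx\dt\longrightarrow 0
\end{equation*}
for a cut-off $\zeta\in C_c^\infty(\Omega_T)$. The second factor inside $\mathbf{A}$ and the subtracted gradient are both in $L^2_{\mathrm{loc}}$, so by the strong convergence $u_i\to u$ a.e.\ and Vitali, the terms not involving $D\mathbf{u}_i^{m_i}\cdot D\mathbf{u}_i^{m_i}$ pass to the limit; the remaining task is to compute the limit of $\iint\mathbf{A}(\cdot,u_i,D\mathbf{u}_i^{m_i})\cdot D\mathbf{u}_i^{m_i}\,\zeta^2\,\dx\dt$, and this is done by testing the weak form of the $u_i$-equation with $\varphi=(\mathbf{u}_i^{m_i}-\mollifytime{\mathbf{u}^m}{h})\zeta^2$, where $\mollifytime{\,\cdot\,}{h}$ is a time-mollification designed to bypass the lack of time regularity of the limit $\mathbf{u}^m$ (this is the role played by "showing the gradients form a Cauchy sequence" in \cite{Kinnunen-Parviainen}; here, the mollification trick works directly thanks to the continuity representative of Remark~\ref{rem:cont_representative_loc}). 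Letting first $i\to\infty$ and then $h\to0$, the parabolic term is handled by integration by parts in the mollified variable and the already-known strong $L^2_{\mathrm{loc}}$-convergence of $\mathbf{u}_i^{m_i}$, producing the desired cancellation and yielding $I_i\to 0$. Strict monotonicity then gives $D\mathbf{u}_i^{m_i}\to D\mathbf{u}^m$ strongly in $L^2_{\mathrm{loc}}$, completing the theorem.
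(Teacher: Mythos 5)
Your overall architecture mirrors the paper's: Caccioppoli estimate, weak convergence identification, compactness for strong convergence of $\u_i^{m_i}$, passage to the limit in the equation, and strong gradient convergence via monotonicity and a time-mollified test. The gradient step in particular is essentially the paper's Lemma~\ref{lem:strong-w1p-convergence}, and your remark that mollification replaces the Cauchy-sequence argument of \cite{Kinnunen-Parviainen} is exactly the point.

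There is, however, a genuine gap in the compactness step. You claim that writing $\partial_t u_i = \Div\A(\cdot,u_i,D\u_i^{m_i})$ gives $\partial_t u_i$ bounded in $L^2(W^{-1,2}_{\mathrm{loc}})$ (correct), and that then \emph{``an elementary identity transfers a similar control to $\partial_t\u_i^{m_i}$''}. No such elementary identity exists: formally $\partial_t\u_i^{m_i} = m_i|u_i|^{m_i-1}\partial_t u_i$, but $\partial_t u_i$ lives only as a distribution in $W^{-1,2}$, and multiplying a $W^{-1,2}$-valued distribution by the merely measurable weight $|u_i|^{m_i-1}$ is not a defined operation, let alone one producing a quantitative bound. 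This is precisely the technical obstacle the porous-medium structure creates, and the paper's way around it is not to bound $\partial_t\u_i^{m_i}$ at all but to control the time-\emph{shift} $\|\tau_h\u_i^{m_i}-\u_i^{m_i}\|_{L^1}$ directly via the nontrivial interpolation Lemma~\ref{lem:interpolation} (from \cite{Boegelein-Dietrich-Vestberg}), which combines the $C([t_1,t_2];(W^{1,2}_0)')$ equicontinuity of $u_i$ with the uniform $L^2(W^{1,2})$ bound on $\u_i^{m_i}$. A second, related issue is your choice of Banach triple $W^{1,2}\hookrightarrow\hookrightarrow L^2\hookrightarrow W^{-1,2}$ with $B=L^2$: this would require the time-shift estimate in an $L^p(L^2)$-norm, which is even stronger than the $L^1(L^1)$ control the interpolation lemma delivers. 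The paper deliberately applies Simon's theorem with $B=L^1$, $p=1$ and then upgrades to $L^q$, $q<2$ (and then to $L^2$ via Gagliardo--Nirenberg in Corollary~\ref{remark:strong-lp-convergence}) by a measure-theoretic argument. As written, your compactness step does not go through; the interpolation lemma, or an equivalent substitute, is indispensable.
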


\subsection{Cauchy-Dirichlet problem}
\hfill \\[1ex]
We further consider stability for a Cauchy-Dirichlet problem of the form
\begin{align}
\label{equation:global-PME}
\left\{
\begin{array}{ll}
\partial_t u- \Div \A(x,t,u, D\u^m) = 0 &\text{ in } \Omega_T, \vspace{1mm}\\
u=g & \text{ on } \partial_{\mathrm{par}}\Omega_T,
\end{array}
\right. 
\end{align}
where $\partial_{\mathrm{par}}\Omega_T := \big( \partial \Omega \times (0,T) \big) \cup \big( \overline{\Omega} \times \lbrace 0 \rbrace \big)$ is the parabolic boundary of $\Omega_T$. Let $m = \lim_{i \to \infty} m_i \in (m_c,\infty)$ as before. In the following we use the shorthand notation
$$
I(u,g) := I_{m}(u,g) := \tfrac 1{m+1} \big( |u|^{m+1} -|g|^{m+1} \big) -\g^m(u-g).
$$
%
%TODO this boundary term appears in hereandthere, for properties look hereandthere
When considering exponents $m_i$ instead of $m$, we then write $I_i(u_i,g)$ for $I_{m_i}(u_i,g)$. \\
We define a weak solution to the Cauchy-Dirichlet problem \eqref{equation:global-PME} as follows.
%%%%%%%%%%%%%%%%%%%%
% old global weak solution definition
%%%%%%%%%%%%%%%%%%%%
%\begin{definition}\label{def:global_weak_solution}\upshape
%Assume that the vector field $\mathbf A\colon \Omega_T\times \R^N\times\R^{Nn}\to\R^{Nn}$ satisfies \eqref{growth} and \eqref{monotone}. 
%We identify a measurable map
%$u\colon\Omega_T\to\R^N$ in the class
%\begin{equation*}
%	u\in C^0 \big([0,T]; L^{m+1}(\Omega,\R^N)\big)
%	\quad\mbox{with}\quad 
%	\u^m\in L^2\big(0,T;W^{1,2}(\Omega,\R^N)\big)
%\end{equation*} 
%as a \textit{weak solution} to the porous medium type system \eqref{por-med-eq} if and only if the identity
%\begin{align*}
%	\iint_{\Omega_T}\big[u\cdot\partial_t\varphi - \mathbf A(x,t,u,D\u^m)\cdot D\varphi\big]\dx\dt
%    	= 0
%\end{align*}
%holds true, for any testing function $\varphi\in
%C_0^\infty(\Omega_T,\R^N)$. \\
%Moreover, for a given function $g: \Omega_T \rightarrow \mathbb R^N$, we say that $u$ attains initial and boundary values $g$ if and only if
%\begin{equation} \label{lateral-boundary}
%(\u^m - \g^m)(\cdot,t) \in W^{1,2}_0(\Omega),\quad \text{ for a.e. } t \in (0,T),
%\end{equation}
%and 
%\begin{equation} \label{initial-boundary}
%\frac{1}{h} \int_0^h \int_\Omega I(u,g)\, \d x \d t \to 0, \quad \text{as } h\to 0.
%\end{equation}
%\end{definition}

\begin{definition}\label{def:global_weak_solution}
Assume that the vector field $\mathbf A\colon \Omega_T\times \R^N\times\R^{Nn}\to\R^{Nn}$ satisfies \eqref{growth} and \eqref{monotone}. Let $g: \Omega_T \rightarrow \mathbb R^N$ be in the class
%$g \in C^0 \big([0,T]; L^{m+1}(\Omega,\R^N)\big)$ and $\g^m \in L^2\big(0,T;W^{1,2}(\Omega,\R^N)\big)$. \\
\begin{equation*}
	g\in C^0 \big([0,T]; L^{m+1}(\Omega,\R^N)\big)
	\quad\mbox{with}\quad 
	\g^m\in L^2\big(0,T;W^{1,2}(\Omega,\R^N)\big).
\end{equation*}We identify a measurable map
$u\colon\Omega_T\to\R^N$ in the class
\begin{equation*}
%	u\in C^0 \big([0,T]; L^{m+1}(\Omega,\R^N)\big)
%	\quad\mbox{with}\quad 
	\u^m\in L^2\big(0,T;W^{1,2}(\Omega,\R^N)\big)
\end{equation*} 
with additional assumption $u \in L^{m+1}(\Omega_T,\R^N)$ if $m< 1$, as a \textit{weak solution} to the porous medium type system \eqref{equation:global-PME} with exponent $m$ and initial and boundary values $g$ if and only if $u$ is a weak solution of \eqref{por-med-eq} with exponent $m$ in the sense of Definition \ref{def:weak_solution} and $u$ attains initial and boundary values $g$ in the sense that
\begin{equation} \label{lateral-boundary}
(\u^m - \g^m)(\cdot,t) \in W^{1,2}_0(\Omega),\quad \text{ for a.e. } t \in (0,T),
\end{equation}
and
\begin{align} \label{initial-boundary}
\frac{1}{h} \int_0^h \int_\Omega I(u,g)\, \d x \d t \longrightarrow 0,
\end{align}
as $h \to 0$.
\end{definition}

%\begin{equation}\label{initial-boundary}
%u(\cdot,0) = g(\cdot,0) \quad \text{ in } L^{1+m}(\Omega,\mathbb R^N).
%\end{equation}\qed \\

%
%and 
%\begin{equation} \label{initial-boundary}
%\frac{1}{h} \int_0^h \int_\Omega I(u,g)\, \d x \d t \to 0, \quad \text{as } h\to 0. 
%\end{equation}\qed

Again, the assumptions on $\varphi$ in Definition~\ref{def:global_weak_solution} can be weakened. It suffices that the test function satisfies
%%{\color{red} do we need $u$ bounded? Compare to Bög-Duz-Marc : PM type eq. with measure data p.3286}
\begin{align*}
\varphi \in W^{1,2}(0,T; L^2(\Omega, \mathbb R^N))
	\cap L^2(0,T;W_0^{1,2}(\Omega, \mathbb R^N))
\end{align*}
and $\varphi(0)=\varphi(T)=0$ when $m \geq 1$.
%{\color{red} do we need something like: $\partial_t \varphi$ is zero in lateral boundary, or: is this already true from the fact that $\varphi \in L^2(\ldots W^{1,2}_0)$?}
If $m<1$, we further demand $\partial_t \varphi \in L^{\frac{1+m}{m}}(\Omega_T, \mathbb R^N)$ to ensure the finiteness of the integral of the parabolic part of the equation.

\begin{remark} \label{rem:cont_representative_glob}
As in the local case, we will also prove that a global weak solution $u$ with initial and boundary data $g$ according to Definition~\ref{def:global_weak_solution} has a representative in class $C([0,T]; L^{m+1}(\Omega,\R^N))$. %So after that we can assume that $u \in C([0,T]; L^{m+1}(\Omega,\R^N))$ as well. 
\end{remark}

\begin{remark}
Note that for the representative $u \in C([0,T]; L^{m+1}(\Omega,\R^N))$ the condition \eqref{initial-boundary} is equivalent to
\begin{align*} 
u(\cdot, 0 )= g(\cdot,0) \quad \text{ a.e. in } \Omega.
%\makebox[0pt][l]{\hspace*{4.65cm}\qed}
\end{align*}
This is a direct consequence of the estimates in~\eqref{eq:boundary_term_ineq}. 
\end{remark}

For the boundary datum $g \colon \Omega_T\to \R^N$ we suppose that for some $\widetilde m < m$, $\beta > 2\frac m {\tilde m}$ and $\gamma > 1+m$, we have
\begin{align}
  \label{assumption:g}
  \left\{
\begin{aligned}
 \g^{\widetilde m} &\in L^{\beta}\big( 0,T;W^{1,\beta}(\Omega,\R^N)\big),\\
 g&\in C^0\big([0,T],L^{\gamma}(\Omega,\R^N)\big),\\ 
  \partial_t\boldsymbol{g}^{\widetilde m}&\in L^{\frac \gamma {\tilde m}} (\Omega_T,\R^N).
\end{aligned}\right.
\end{align}

The reason for choosing these conditions is twofold. First of all, it ensures that $g$ can be chosen as initial and boundary values for all $i \in \mathbb N$, even though the exponents differ.
% $\g^{m_i} \in L^2(0,T;W^{1,2}(\Omega,\mathbb R^N))$, $g \in C \big( [0,T],L^{1+m_i}(\Omega,\mathbb R^N) \big) $ and $\partial_t \g^{m_i} \in L^{\frac{m_i+1}{m_i}}(\Omega_T, \mathbb R^N)$ for all $i \in \mathbb N$, such that $g$ can be chosen as initial and boundary values.
Secondly, it ensures the uniform boundedness of the right hand side of the energy estimate in Lemma \ref{lem:caccioppoli_global}. We will show why these conditions are needed in Lemma \ref{lem:g_conditions}.
%fancy: we will expand on this topic in ...

Observe that we could make stronger but more simplified assumptions, for example $\g^{\widetilde m} \in C^1(\overline{\Omega}_T)$ for some $\widetilde m < m$, which would ensure that the conditions above are satisfied.

Further, note that with these assumptions, the boundary problem for weak solutions might not be well defined for small $i$: The exponent $m_i$, possibly being quite larger than $m$, could exceed the integrability exponent of $g$. However, we are only interested in convergence properties, i.e. the tail of the sequence in question, such that this restriction is of no concern to us. We may thus assume that $m_i$ is already sufficiently close to $m$, ensuring existence of weak solutions and finiteness of the integrals as in Lemma \ref{lem:g_conditions} for all $i \in \mathbb N$.

We will extend the local result in Theorem \ref{theorem} to the boundary:

\begin{theorem}
\label{theorem:global}
Let $(m_i)_{i \in \N}$ be a sequence in $(m_c,\infty)$ such that $m_i \longrightarrow m \in (m_c,\infty)$ as $i \longrightarrow \infty$. Let $u_i$ be a weak solution of Equation~\eqref{equation:global-PME} with exponent $m_i$ in the sense of Definition~\ref{def:global_weak_solution}, where the vector field $\mathbf A$ satisfies the growth and monotonicity conditions~\eqref{growth} and~\eqref{monotone} and the boundary datum $g$ fulfils the conditions \eqref{assumption:g}. \\
Then there exists a subsequence, still denoted by $(\u_i^{m_i})$, and a measurable map $u: \Omega_T \to \R^N$, such that $\u^m \in L^{2}(0,T; W^{1,2}(\Omega,\R^N))$ with
\begin{align} \label{e.global_convergence}
\u_i^{m_i} \stackrel{i \rightarrow \infty}\longrightarrow \u^m \quad \text{ in } L^{2}(0,T; W^{1,2}(\Omega,\R^N)).
\end{align}
Moreover, the limit function $u$ is a weak solution to the Equation~\eqref{equation:global-PME}, attaining the initial and boundary values $g$ in the sense of \eqref{lateral-boundary} and \eqref{initial-boundary}.
\end{theorem}

\begin{remark} \label{rem:uniqueness}
In the special case $\A(x,t,u, D\u^m) = D \u^m$ the limit function $u$ in Theorem~\ref{theorem:global} is unique and the convergence in~\eqref{e.global_convergence} holds for the whole (original) sequence, not only on the level of subsequences.
\end{remark}

\section{Auxiliary results}
\noindent
We first recall a compactness result by Simon~\cite{Simon}. Let us denote $(\tau_h f) (t) := f(t+h)$ for $h>0$.
\begin{lemma} \label{lem:simon-compactness}
	Assume that there is a compact embedding of Banach spaces $X \subset B$. Let $F \subset L^p(0,T;B)$, where $1 \leq p \leq \infty$. In addition, suppose that
	$$
	F \text{ is bounded in } L^1_\mathrm{loc}(0,T;X)
	$$
	and 
	$$
	\| \tau_h f - f\|_{L^p(0,T-h;B)} \to 0 \text{ as } h \to 0, \text{ uniformly for } f \in F. 
	$$
	Then $F$ is relatively compact in $L^p(0,T;B)$.
\end{lemma}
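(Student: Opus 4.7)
The plan is to follow the classical time-averaging argument due to Simon. For $h>0$ I introduce the forward Steklov mean
$$
f_h(t) := \frac{1}{h}\int_t^{t+h} f(\sigma)\,\dsigma, \qquad t\in (0,T-h),
$$
well-defined as an element of $X$ (hence of $B$). The strategy has three stages: first, show that $f_h\to f$ in $L^p(0,T-h;B)$ uniformly in $f\in F$ as $h\to 0$; second, show that for each fixed $h>0$ the family $F_h := \{f_h : f\in F\}$ is relatively compact in $C([a,b];B)$ for every $[a,b]\Subset(0,T-h)$; third, combine these to deduce total boundedness of $F$ in $L^p(0,T;B)$. The first stage is immediate from Minkowski's inequality for integrals, which gives
$$
\|f_h - f\|_{L^p(0,T-h;B)} \le \sup_{0<\sigma<h}\|\tau_\sigma f - f\|_{L^p(0,T-h;B)},
$$
whose right hand side vanishes uniformly in $f\in F$ by the translation-equicontinuity hypothesis.

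For the second stage I would verify the two conditions of the Arzelà--Ascoli theorem. Pointwise precompactness of $\{f_h(t) : f\in F\}$ at each $t\in[a,b]$ follows from
$$
\|f_h(t)\|_X \le \frac{1}{h}\int_t^{t+h}\|f(\sigma)\|_X\,\dsigma,
$$
which is bounded uniformly in $f\in F$ by the $L^1_{\loc}(0,T;X)$-bound, combined with the compact embedding $X\subset B$. For equicontinuity in the $B$-norm, for $s<t$ in $[a,b]$ with $t-s<h$ I use the identity
$$
f_h(t) - f_h(s) = \frac{1}{h}\int_s^t\bigl( f(\sigma+h) - f(\sigma)\bigr)\,\dsigma,
$$
obtained by a change of variables, together with Hölder's inequality, to obtain
$$
\|f_h(t) - f_h(s)\|_B \le \frac{|t-s|^{1-1/p}}{h}\,\|\tau_h f - f\|_{L^p(0,T-h;B)},
$$
which is a uniform Hölder modulus on $F$ for each fixed $h$.

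For the third stage, given $\varepsilon>0$, I would pick $h>0$ small so that $\sup_{f\in F}\|f_h - f\|_{L^p(0,T-h;B)}<\varepsilon/3$ and $h<\varepsilon$; choose $[a,b]\Subset(0,T-h)$ whose complement in $(0,T)$ has a sufficiently small $L^p$-contribution for every $f_h$, using the uniform bound $\|f_h\|_{L^\infty(0,T-h;B)}\le h^{-1}\sup_{f\in F}\|f\|_{L^1([0,T];B)}$ (finite because of the continuous embedding $X\hookrightarrow B$ and the $L^1_{\loc}(X)$-bound on $F$); then use the Arzelà--Ascoli step to cover $F_h|_{[a,b]}$ by finitely many $L^p([a,b];B)$-balls of radius $\varepsilon/3$, and deduce via the triangle inequality a finite $\varepsilon$-net for $F$ in $L^p(0,T;B)$. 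The main obstacle is exactly this third stage: upgrading compactness on compact subintervals $[a,b]\Subset(0,T-h)$ to compactness on the full interval $(0,T)$ requires careful uniform control of the lateral strips $(0,a)$ and $(b,T)$; this is also the source of the usual restriction to $1\le p<\infty$, since the averaging procedure does not by itself yield convergence in $L^\infty$, and in that endpoint case one typically concludes compactness in every $L^q$ with $q<\infty$ and combines it with the separate $L^\infty$-bound.
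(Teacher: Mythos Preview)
The paper does not supply a proof of this lemma at all: it is merely recalled, with a citation to Simon's classical paper \cite{Simon} (Theorem~3 there), and then applied later in Lemma~\ref{lem:strong-lp-convergence}. So there is no argument in the paper against which to compare your proposal.

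Your sketch follows Simon's original time-averaging route and is essentially correct in outline, but two points deserve tightening. First, the equicontinuity estimate
\[
\|f_h(t)-f_h(s)\|_B \le \frac{|t-s|^{1-1/p}}{h}\,\|\tau_h f - f\|_{L^p(0,T-h;B)}
\]
yields a genuine modulus only for $p>1$; when $p=1$ the exponent is zero and the bound is just a constant, so you need a separate argument (e.g.\ approximate first in $L^1$ by a family bounded in some $L^q$, $q>1$, or argue equi-integrability directly). Second, in your third stage you invoke a uniform bound on $\|f\|_{L^1([0,T];B)}$ to control $\|f_h\|_{L^\infty}$ near the lateral strips, but the hypothesis only gives boundedness in $L^1_{\mathrm{loc}}(0,T;X)$, and $F\subset L^p(0,T;B)$ does not by itself mean $F$ is \emph{bounded} there; in Simon's argument boundedness of $F$ in $L^p(0,T;B)$ is in fact deduced along the way, and the strip control uses this together with H\"older rather than an $L^1(0,T;B)$ bound. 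These are exactly the delicate points you flag yourself, and they are handled carefully in \cite{Simon}.
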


Further there will be the need for some algebraic inequalities, also regarding the boundary term $I(u,g)$. It is often useful to see that it is comparable to $\u ^{\frac{m+1}2} - \g ^{\frac{m+1}2}$. We take the following Lemmas from \cite[Lemma 3.2, 3.3]{Boegelein-Duzaar-Scheven:2018} and from \cite[Lemma 2.3]{Boegelein-Duzaar-Korte-Scheven}.

\begin{lemma}
	For all $\alpha > 1$ there exists $c=c(\alpha)>0$ such that for all $a,b \in \mathbb R^N$ there holds
	\begin{align}\label{eq:pull_exponent_inside}
	|b-a|^\alpha
	\leq c \, |\boldsymbol b^\alpha - \boldsymbol a^\alpha|.
	\end{align}
\end{lemma}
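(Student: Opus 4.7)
By the scaling $(a,b)\mapsto(\lambda a,\lambda b)$ for $\lambda>0$, both sides of~\eqref{eq:pull_exponent_inside} are homogeneous of degree $\alpha$, so it is enough to treat the case $|b-a|=1$ (the case $a=b$ being trivial). At the level of intuition, the claim says that the radial map $f(\xi):=\boldsymbol{\xi}^\alpha=|\xi|^{\alpha-1}\xi$ has a $(1/\alpha)$-Hölder inverse on all of $\mathbb{R}^N$, and what one must exhibit is a quantitative modulus of continuity for $f^{-1}$.

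The cleanest route is via the fundamental theorem of calculus along the segment $[a,b]$. Away from the origin, $f$ is $C^1$ with
\[
Df(\xi)h \;=\; |\xi|^{\alpha-1}\,h \,+\, (\alpha-1)|\xi|^{\alpha-3}(\xi\cdot h)\,\xi,
\]
and since $\alpha>1$ this expression extends continuously to the origin by zero. Thus
\[
\boldsymbol{b}^\alpha - \boldsymbol{a}^\alpha \;=\; \int_0^1 Df\bigl(a+t(b-a)\bigr)\,(b-a)\,dt.
\]
Taking the Euclidean inner product with $b-a$ and observing that the extra rank-one term in $Df(\xi)$ contributes non-negatively (as $\alpha>1$), I obtain the pointwise bound $\bigl(Df(\xi)h\bigr)\cdot h\geq |\xi|^{\alpha-1}|h|^2$, and hence
\[
(\boldsymbol{b}^\alpha - \boldsymbol{a}^\alpha)\cdot(b-a) \;\geq\; |b-a|^2 \int_0^1 |a+t(b-a)|^{\alpha-1}\,dt.
\]

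The key step is a geometric lower bound $\int_0^1 |a+t(b-a)|^{\alpha-1}\,dt \geq c(\alpha)\,|b-a|^{\alpha-1}$. For this, note that by the triangle inequality at least one of $|a|$, $|b|$ is bounded below by $|b-a|/2$; say the latter. Then for $t\in[\tfrac34,1]$ one has $|a+t(b-a)|=|b-(1-t)(b-a)|\geq |b-a|/2 - |b-a|/4 = |b-a|/4$, yielding the bound with constant $4^{-\alpha}$. Combined with the Cauchy--Schwarz inequality $(\boldsymbol{b}^\alpha-\boldsymbol{a}^\alpha)\cdot(b-a) \leq |\boldsymbol{b}^\alpha-\boldsymbol{a}^\alpha|\,|b-a|$, this chain produces~\eqref{eq:pull_exponent_inside} with $c=4^\alpha$.

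The only mildly delicate point I anticipate is that the segment $[a,b]$ may pass through the origin, where $Df$ is not defined classically. This is routine: one can either split the $t$-integration at the (unique) crossing time and apply the $C^1$-theory on each half, or approximate $a$ and $b$ by $a+\varepsilon e$ and $b+\varepsilon e$ for a generic $e\in\mathbb{R}^N$ and pass to the limit using dominated convergence with the integrable dominating function $\alpha|\,\cdot\,|^{\alpha-1}$.
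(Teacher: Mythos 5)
The paper does not prove this lemma; it cites it from B\"ogelein--Duzaar--Scheven and B\"ogelein--Duzaar--Korte--Scheven, so there is no in-paper argument to compare against. Your proof is correct and self-contained. The chain
\[
|\boldsymbol b^\alpha-\boldsymbol a^\alpha|\,|b-a|
\;\ge\;(\boldsymbol b^\alpha-\boldsymbol a^\alpha)\cdot(b-a)
\;\ge\;|b-a|^2\int_0^1|a+t(b-a)|^{\alpha-1}\,dt
\;\ge\;4^{-\alpha}|b-a|^{\alpha+1}
\]
is valid: the Jacobian formula and the discarded rank-one term are as you state, the triangle-inequality argument that at least one endpoint has norm $\ge|b-a|/2$ and that $|a+t(b-a)|\ge|b-a|/4$ on a quarter of the $t$-interval are both right, and dividing by $|b-a|>0$ gives $c=4^\alpha$. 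One minor remark: the worry about the segment passing through the origin is unnecessary. For $\alpha>1$ the map $f(\xi)=|\xi|^{\alpha-1}\xi$ is globally $C^1$ with $Df(0)=0$ (differentiability at $0$ follows from $|f(h)|=|h|^\alpha=o(|h|)$, and $\|Df(\xi)\|\le\alpha|\xi|^{\alpha-1}\to0$ gives continuity of the derivative), so the fundamental theorem of calculus applies directly along any segment without a splitting or $\varepsilon$-shift argument. Your fallback arguments are fine, just not needed. The cited sources typically argue via a case distinction on $|a|$ versus $|b|$ combined with algebraic estimates; your FTC-plus-monotonicity route is a clean alternative that also yields an explicit, if not optimal, constant.
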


\begin{lemma}
For all $m > 0$ there exists $c=c(m)>0$ such that for all $u,g \in \mathbb R^N$ one has
\begin{align}\label{eq:boundary_term_ineq}
\begin{aligned}
\tfrac 1 c \, I(u,g)
&\leq | \u ^{\frac{m+1}2} - \g ^{\frac{m+1}2} |^2
\leq c \, I(u,g), \\
\tfrac 1 c | \u^m - \g^m |
&\leq \big( |u| + |g| \big)^{m-1}|u-g|
\leq c |\u^m - \g^m|.
\end{aligned}
\end{align}
Further,
\begin{align}\label{eq:boundary_term_ineq2}
\begin{aligned}
I(u,g) &\leq c |\u^m - \g^m|^{(1+m)/m} \quad \text{ for } m > 1, \\
I(u,g) &\leq c (|u| + |g|)^{m-1} |u-g|^2
	\leq c |\u^m - \g^m| |u-g|
	\quad \text{ for } m > 0.
\end{aligned}
\end{align}
\end{lemma}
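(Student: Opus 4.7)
The plan is to reduce the whole statement to a single master equivalence
\[
I(u,g)\sim (|u|+|g|)^{m-1}|u-g|^2\sim \big|\u^{\frac{m+1}{2}}-\g^{\frac{m+1}{2}}\big|^2,\qquad |\u^m-\g^m|\sim(|u|+|g|)^{m-1}|u-g|,
\]
with constants depending only on $m>0$; after that, the two lines of~\eqref{eq:boundary_term_ineq} can be read off directly, and the two lines of~\eqref{eq:boundary_term_ineq2} follow by elementary algebra.

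First I would observe that $I(u,g)=F(u)-F(g)-\nabla F(g)\cdot(u-g)$ for the strictly convex potential $F(v):=|v|^{m+1}/(m+1)$, whose gradient is $\nabla F(v)=\boldsymbol{v}^m$. Taylor's formula with integral remainder then gives
\[
I(u,g)=\int_0^1(1-s)\,D^2F(g_s)(u-g,u-g)\,\ds,\qquad g_s:=g+s(u-g).
\]
A direct computation yields $D^2F(v)(\xi,\xi)=|v|^{m-1}|\xi|^2+(m-1)|v|^{m-3}(v\cdot\xi)^2$, and combining this with $(v\cdot\xi)^2\le|v|^2|\xi|^2$ produces the two-sided sandwich
\[
\min(1,m)\,|v|^{m-1}|\xi|^2\;\le\;D^2F(v)(\xi,\xi)\;\le\;\max(1,m)\,|v|^{m-1}|\xi|^2,\qquad m>0.
\]
The very same device applied to the map $v\mapsto |v|^{\alpha-1}v$ furnishes the vector representation $\boldsymbol{u}^\alpha-\boldsymbol{g}^\alpha=\int_0^1\nabla(|\cdot|^{\alpha-1}\,\cdot)(g_s)(u-g)\,\ds$ together with a derivative that is bounded above and below by $|v|^{\alpha-1}|\xi|$ in exactly the same Cauchy--Schwarz manner.

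At this point everything boils down to the classical line-integral bound
\[
\int_0^1|g_s|^{m-1}\,\ds\;\sim\;(|u|+|g|)^{m-1},\qquad m>0,
\]
which is established by case analysis on the regions $|u|\le\tfrac12|g|$, $|g|\le\tfrac12|u|$, and $|u|\sim|g|$: in the first two $|g_s|\sim\max(|u|,|g|)$ uniformly in $s$, while in the third one uses only that the potential singularity of $s\mapsto|g_s|^{m-1}$ at a zero of $g_s$ is integrable, which holds precisely because $m>0$. Plugging this back with $\alpha=(m+1)/2$ and squaring establishes the first equivalence of the master display, and $\alpha=m$ establishes the second; the two inequalities in \eqref{eq:boundary_term_ineq} are then exactly these equivalences. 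From the same master estimate the second line of \eqref{eq:boundary_term_ineq2} is immediate: $I(u,g)\lesssim(|u|+|g|)^{m-1}|u-g|^2=\bigl[(|u|+|g|)^{m-1}|u-g|\bigr]\,|u-g|\lesssim|\u^m-\g^m|\,|u-g|$; and the first line, valid for $m>1$, rearranges via the master equivalence into $|u-g|^{m-1}\lesssim(|u|+|g|)^{m-1}$, which holds because $|u-g|\le|u|+|g|$ and $m-1>0$.

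The only genuinely delicate step is the Hessian sandwich in the singular range $0<m<1$, where the cross term $(m-1)|v|^{m-3}(v\cdot\xi)^2$ is negative and must be absorbed into the positive principal term; the resulting sharp lower constant $\min(1,m)=m$ degenerates as $m\downarrow 0$, matching exactly the standing hypothesis $m>0$ in the statement.
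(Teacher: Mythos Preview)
The paper does not prove this lemma; it is quoted from \cite{Boegelein-Duzaar-Korte-Scheven} and \cite{Boegelein-Duzaar-Scheven:2018}. Your overall strategy---Taylor remainder for $F(v)=|v|^{m+1}/(m+1)$, the two-sided Hessian sandwich $\min(1,m)|v|^{m-1}|\xi|^2\le D^2F(v)(\xi,\xi)\le\max(1,m)|v|^{m-1}|\xi|^2$, and reduction to the line-integral comparison $\int_0^1|g_s|^{m-1}\,\ds\sim(|u|+|g|)^{m-1}$---is the standard route and does yield the result.

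Two steps in the execution, however, do not go through as written. First, in the case analysis for the line integral you assert that when $|u|\le\tfrac12|g|$ one has $|g_s|\sim|g|$ uniformly in $s\in[0,1]$; this is false already for $u=0$, where $g_s=(1-s)g$ vanishes at $s=1$. The integral comparison is still correct, but the possible near-vanishing of $g_s$ has to be handled by integrability in \emph{all} cases, not only when $|u|\sim|g|$; the case split you propose does not isolate the singular behaviour. (A clean way: normalise $|u|+|g|=1$ and observe that if $g_{s_0}=0$ for some $s_0\in[0,1]$ then $u$ and $g$ must be antiparallel with $|u-g|=|u|+|g|=1$, hence $|g_s|=|s-s_0|$ and the integral equals $\tfrac{1}{m}\big(s_0^{m}+(1-s_0)^{m}\big)$; the non-vanishing case is then a compactness argument.) Second, the lower bound $|\u^\alpha-\g^\alpha|\ge c\,|u-g|\int_0^1|g_s|^{\alpha-1}\,\ds$ does not follow from the pointwise lower bound on $|D(|\cdot|^{\alpha-1}\cdot)(v)\xi|$ alone, since one cannot pull the norm inside a vector-valued integral from below. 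It is obtained instead by pairing with $u-g$: the Hessian sandwich gives $(\u^\alpha-\g^\alpha)\cdot(u-g)\ge\min(1,\alpha)\,|u-g|^2\int_0^1|g_s|^{\alpha-1}\,\ds$, and Cauchy--Schwarz on the left-hand side then delivers the desired lower bound on $|\u^\alpha-\g^\alpha|$.
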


\subsection{Sobolev-Gagliardo-Nirenberg inequalities}
\hfill \\[1ex]
Next we state the parabolic Sobolev inequality from~\cite[Prop. I.3.1]{DiBe} and a local variant of it. The following inequality will allow us to gain higher integrability for the functions $u_i$ and further, better convergence properties.
\begin{lemma} \label{lem:parabolic-sobolev}
Let $B(x_o,\rho) \subset \Omega$ and $0<t_1<t_2<T$. If 
$$
v \in L^\infty\big(t_1,t_2;L^r(B(x_o,\rho)) \big) \cap L^p\big(t_1,t_2; W^{1,p}(B(x_o,\rho)) \big)
$$
for $p \in (1,\infty)$ and $r \in [1,\infty)$, there exists a constant $c = c(n,p,r)$ such that
\begin{align*}
\int_{t_1}^{t_2} &\int_{B(x_o,\rho)} |v|^\ell \d x \d t \\
&\le c \int_{t_1}^{t_2} \int_{B(x_o,\rho)} \left( \left| \frac{v}{\rho} \right|^p + |D v|^p \right) \d x \d t \left( \sup_{t \in (t_1,t_2)} \int_{B(x_o,\rho) \times \lbrace t \rbrace} |v|^r \d x \right)^\frac{p}{n},
\end{align*}
where $\ell = p \frac{n+r}{n}$.
\end{lemma}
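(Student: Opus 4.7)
The plan is to derive the inequality time-slice by time-slice via a spatial Hölder interpolation between a Sobolev exponent and $L^r$, and then integrate in $t$. The key observation is the exponent identity $\ell-p=pr/n$, which is precisely what makes the Hölder interpolation close up.

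Fix $t\in(t_1,t_2)$ and assume first $p<n$, with Sobolev exponent $p^{\ast}=\tfrac{np}{n-p}$. Writing $|v|^\ell=|v|^p\cdot|v|^{pr/n}$ and applying Hölder's inequality with conjugate exponents $\tfrac{p^\ast}{p}$ and $\tfrac{n}{p}$ (for which the identity $1-\tfrac{p}{p^\ast}=\tfrac{p}{n}$ is crucial) yields
\begin{equation*}
\int_{B(x_o,\rho)}|v|^\ell\,dx \;\le\; \left(\int_{B(x_o,\rho)}|v|^{p^{\ast}}\,dx\right)^{p/p^{\ast}}\left(\int_{B(x_o,\rho)}|v|^{r}\,dx\right)^{p/n}.
\end{equation*}
Next I would apply the standard Sobolev inequality on $B(x_o,\rho)$ (without zero boundary trace) to the first factor, scaled from the unit ball so that the lower-order term carries the correct $\rho^{-p}$ weight:
\begin{equation*}
\left(\int_{B(x_o,\rho)}|v|^{p^{\ast}}\,dx\right)^{p/p^{\ast}} \;\le\; c\int_{B(x_o,\rho)}\left(\left|\tfrac{v}{\rho}\right|^{p}+|Dv|^{p}\right)dx,
\end{equation*}
with $c=c(n,p)$.

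Combining these two inequalities gives a pointwise-in-$t$ bound. Integrating over $(t_1,t_2)$, I factor out the second term as an essential supremum in $t$ (this is legal because the factor $(\int_B|v|^r\,dx)^{p/n}$ enters only multiplicatively on each slice and is bounded by its essential supremum), producing the right-hand side claimed in the lemma.

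For the complementary range $p\ge n$ the exponent $p^{\ast}$ is not finite, but one can pick an auxiliary $p'<n$ with $p'$ close to $n$ and exploit $W^{1,p}(B)\hookrightarrow W^{1,p'}(B)$ to run exactly the same interpolation; alternatively $W^{1,p}\hookrightarrow L^{q}$ for any $q<\infty$ (respectively $q=\infty$ when $p>n$) combined with a final Hölder step yields the result. The principal, though routine, obstacle is the bookkeeping of the $\rho$-scaling: one must scale to the unit ball, apply a trace-free Sobolev inequality there, and scale back so that the lower-order contribution appears precisely in the form $|v/\rho|^p$ rather than $|v|^p$ with a $\rho$-dependent constant. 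A secondary point is ensuring that the final constant depends only on $n,p,r$, which requires tracking both the Hölder and Sobolev constants through the case distinction in $p$ versus $n$.
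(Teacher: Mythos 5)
Your argument for $p<n$ is correct and is essentially the standard proof of this Gagliardo–Nirenberg-type inequality: the exponent bookkeeping for the Hölder pair $(p^*/p,\,n/p)$ and the $\rho$-scaling of the trace-free Sobolev inequality both check out, and integrating in $t$ while pulling the slice-wise $L^r$-factor out as an essential supremum is legitimate.

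The gap is in the complementary range $p\ge n$, and neither of your two proposed repairs closes it. The Hölder step is rigid: writing $|v|^\ell=|v|^a|v|^b$ and demanding the second factor end up as $\bigl(\int|v|^r\bigr)^{p/n}$ and the first factor, after Sobolev, as $\int\bigl(|v/\rho|^p+|Dv|^p\bigr)$ forces $a=p$, $b=pr/n$, and then the conjugacy relation $a/q+b/r=1$ forces the Sobolev exponent to be exactly $q=np/(n-p)=p^\ast$. Picking a smaller finite $q$ is therefore not an option; it changes the exponents that come out. Likewise, replacing $p$ by $p'<n$ and using $W^{1,p}\hookrightarrow W^{1,p'}$ yields a bound on $\int|v|^{\ell'}$ with $\ell'=p'(n+r)/n<\ell$, not on $\int|v|^\ell$, so this does not produce the claimed inequality either (and attempting to pass back from $p'$ to $p$ via Hölder on the ball introduces $\rho$-dependent constants). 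Note that the case $p=n$ is not an idle generality here: the paper works with $n\ge2$ and applies this lemma with $p=2$, so $p=n$ genuinely occurs when $n=2$.

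The standard way to cover all $p\in(1,\infty)$ uniformly (and the route taken in the cited source, DiBenedetto, Prop.~I.3.1) is to apply the Sobolev embedding $W^{1,1}(B)\hookrightarrow L^{n/(n-1)}(B)$ to the power $w=|v|^\alpha$ with $\alpha=\ell(n-1)/n$, which gives
\begin{equation*}
\Bigl(\int_B |v|^{\ell}\,dx\Bigr)^{\frac{n-1}{n}}
\le c\,\alpha\int_B\Bigl(\tfrac{|v|^{\alpha}}{\rho}+|v|^{\alpha-1}|Dv|\Bigr)\,dx,
\end{equation*}
then use Hölder with exponents $(p',p)$ on the right to produce $\bigl(\int_B|v|^{(\alpha-1)p'}\bigr)^{1/p'}\bigl(\int_B(|v/\rho|^p+|Dv|^p)\bigr)^{1/p}$, and finally interpolate the $L^{(\alpha-1)p'}$-norm between $L^r$ and $L^\ell$ so that the $L^\ell$-factor can be absorbed into the left-hand side. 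Carrying out this interpolation (the exponent arithmetic works out to exactly $\int_B|v|^\ell\le c\,\bigl(\int_B|v|^r\bigr)^{p/n}\int_B(|v/\rho|^p+|Dv|^p)$) yields the slice-wise inequality for all $p>1$ without any restriction to $p<n$, after which your final integration-in-time step applies unchanged.
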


\begin{lemma} \label{lem:global-parabolic-sobolev}
If
$$
v \in L^\infty\big(0,T;L^r(\Omega) \big) \cap L^p\big(0,T; W_0^{1,p}(\Omega) \big)
$$
for $p \in (1,\infty)$ and $r \in [1,\infty)$, there exists a constant $c = c(n,p,r,\Omega)$ such that
\begin{align*}
\iint_{\Omega_T} &|v|^\ell \d x \d t \\
	&\leq c \bigg( \iint_{\Omega_T} |Dv|^p \d x \d t \bigg) 
	\bigg( \sup_{t \in (0,T)} \int_{\Omega \times \lbrace t \rbrace} |v|^r \d x \bigg)^{\frac p n},
\end{align*}
where $\ell = p \frac{n+r}{n}$.
\end{lemma}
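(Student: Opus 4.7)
The plan is to reduce the global inequality to the local one in Lemma~\ref{lem:parabolic-sobolev} by a zero-extension argument. Since $v(\cdot,t) \in W_0^{1,p}(\Omega)$ for a.e.~$t \in (0,T)$, extending $v$ by zero outside $\Omega$ yields a function, still denoted by $v$, which lies in $L^p(0,T;W^{1,p}(B_R))$ for any ball $B_R \supset \Omega$, without affecting the $L^p$-norm of $Dv$ or the $L^r$-norm slicewise. Fixing such a ball of radius $R$ depending only on $\Omega$ and applying Lemma~\ref{lem:parabolic-sobolev} on $B_R \times (0,T)$, and using that $v$ vanishes outside $\Omega$, I obtain
\begin{align*}
\iint_{\Omega_T} |v|^\ell \dx\dt \leq c(n,p,r,\Omega) \iint_{\Omega_T} \bigl( |v|^p + |Dv|^p \bigr) \dx\dt \; \bigg( \sup_{t \in (0,T)} \int_{\Omega} |v|^r \dx \bigg)^{p/n}.
\end{align*}

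Next I would eliminate the lower-order term $|v|^p$ via Poincaré's inequality. Since $v(\cdot,t) \in W_0^{1,p}(\Omega)$ with $\Omega$ bounded, one has $\int_\Omega |v(\cdot,t)|^p \dx \leq c(\Omega,p) \int_\Omega |Dv(\cdot,t)|^p \dx$ for a.e.~$t$. Integrating in time and substituting absorbs the $|v|^p$ contribution into $|Dv|^p$, producing the claim with $c = c(n,p,r,\Omega)$.

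The argument is entirely routine; the only mild subtlety is that the constant must depend on $\Omega$, which enters exclusively through the Poincaré constant (and the radius $R$). An equivalent, more self-contained route is to apply Gagliardo–Nirenberg at each time slice: interpolating between the Sobolev embedding $W_0^{1,p}(\Omega) \hookrightarrow L^{p^*}(\Omega)$ for $p<n$ (respectively $L^q$ for every finite $q$ when $p \geq n$) and $L^r(\Omega)$ with exponents $\theta = p/\ell$ and $1-\theta = pr/(n\ell)$ gives $\|v(\cdot,t)\|_\ell^\ell \leq c\, \|Dv(\cdot,t)\|_p^p\, \|v(\cdot,t)\|_r^{pr/n}$ at each slice; integrating in $t$ and pulling out the supremum of $\|v(\cdot,t)\|_r^r$ raised to the power $p/n$ yields the stated inequality. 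No step presents any genuine obstacle.
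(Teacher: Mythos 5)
The paper does not prove this lemma; it simply cites it as \cite[Prop.~I.3.1]{DiBe}, with Lemma~\ref{lem:parabolic-sobolev} being the local variant. Your reconstruction is correct, and in fact your second, more self-contained route via slicewise Gagliardo--Nirenberg is precisely the standard derivation given in DiBenedetto: writing $|v|^\ell = |v|^p\,|v|^{pr/n}$ at each time slice, applying Sobolev/Gagliardo--Nirenberg for the zero-trace function to control $\|v(\cdot,t)\|_\ell^\ell \le c\,\|Dv(\cdot,t)\|_p^p\,\|v(\cdot,t)\|_r^{pr/n}$, then integrating in $t$ and pulling out the essential supremum of the $L^r$-norm to the power $p/n$. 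Your first route, zero-extension to a fixed ball $B_R\supset\Omega$ followed by Lemma~\ref{lem:parabolic-sobolev} and Poincar\'e to absorb the lower-order term, also works and cleanly shows where the dependence of the constant on $\Omega$ enters (through $R$ and the Poincar\'e constant); the only mild caveat is that Lemma~\ref{lem:parabolic-sobolev} is stated for $0<t_1<t_2<T$, so one should pass to the limit $t_1\to 0$, $t_2\to T$ by monotone convergence, which is harmless.
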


%
%For a family of functions $F \subset L^1(\Omega_T)$ let us denote $F^m := \{ |f|^{m-1}f : f \in F \}$ for $m>0$.
%	
%\begin{lemma} \label{lem:compactness}
%Let $m>0$, $p>1$, $T>0$ and $\Omega \subset \R^n$ be a bounded domain and $F \subset L^{mp} (\Omega_T)$ such that $F^m \subset L^p(0,T; W^{1,p}(\Omega))$. Suppose that 
%$$
%F^m \text{ is bounded in } L^p(0,T;W^{1,p}(\Omega)) 
%$$
%and 
%$$
%F \text{ is uniformly equicontinuous in } C([0,T];Y) \text{ for some Banach space } Y \supset W^{1,p}(\Omega). 
%$$
%Then $F$ is relatively compact in $L^1(\Omega_T)$.
%\end{lemma}
%

\subsection{Mollification in time}
\hfill \\[1ex]
In order to be able to prove useful estimates for weak solutions of Equation~\eqref{por-med-eq}, we exploit time mollification of the following type. 

\begin{definition} For $v \in L^1(\Omega_T,\mathbb{R}^N)$ and $h>0$, define a mollification in time by
\begin{align*}
\mollifytime{v}{h}(x,t)
	:= \frac 1 h \int_0^t e^{\frac{s-t}{h}} v(x,s) \mathrm{d}s.
\end{align*}
Similarly, define the reverse time mollification in time by
\begin{align*}
\mollifytime{v}{\bar h}(x,t)
	:= \frac 1 h \int_t^T e^{\frac{t-s}{h}} v(x,s) \mathrm{d}s.
\end{align*}
\end{definition}
We collect some useful properties of the mollification in the following Lemma, see~\cite[Lemma 2.9]{Kinnunen-Lindqvist} and~\cite[Appendix B]{BDM:pq}. Analogous statements hold true for the reverse time mollification.

\begin{lemma} \label{lem:mollifier}
  Let $v$ and $\mollifytime{v}{h}$ be as above. Then the following properties hold: \\
  \emph{(i)} If $v \in L^p(\Omega_T,\R^N)$ for some $p \geq 1$, then 

  \begin{equation*}
    \| \mollifytime{v}{h} \|_{L^p(\Omega_T,\R^N)} \leq \| v \|_{L^p(\Omega_T,\R^N)},
  \end{equation*}
  and $\mollifytime{v}{h} \to v$ in $L^p(\Omega_T,\R^N)$ as $h \to 0$.
\\
\emph{(ii)} Let $v\in L^p(0,T; W^{1,p}(\Omega,\R^N))$ for some $p \geq 1$. Then
\begin{equation*}
  \| \mollifytime{v}{h}\|_{L^p(0,T;W^{1,p}(\Omega,\R^N))} \leq \| v\|_{L^p(0,T;W^{1,p}(\Omega,\R^N))} 
\end{equation*}
and $\mollifytime{v}{h}\to v$ in $L^p(0,T;W^{1,p}(\Omega,\R^N))$ as $h \to 0$.
\\
\emph{(iii)} If $v \in L^p(0,T; W_0^{1,p}(\Omega,\R^N))$, then $\mollifytime{v}{h} \in L^p(0,T;W_0^{1,p}(\Omega,\R^N))$.
\vspace{1mm}
\\
\emph{(iv)} If $v \in L^p(0,T; L^p(\Omega,\R^N))$, then $\mollifytime{v}{h} \in C([0,T]; L^p(\Omega,\R^N))$.
\vspace{1mm}
\\
\emph{(v)} The weak time derivative $\partial_t \mollifytime{v}{h}$ exists in $\Omega_T$ and is given by formula
\begin{equation*}
  \partial_t \mollifytime{v}{h} = \frac{1}{h} ( v - \mollifytime{v}{h} ),
\end{equation*}
whereas for the reverse mollification we have 
$$
\partial_t \mollifytime{v}{\bar h} = \frac{1}{h} (\mollifytime{v}{\bar h} - v).
$$
\end{lemma}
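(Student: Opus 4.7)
The plan is to establish the five properties in the order (i)$\to$(ii)$\to$(iii)$\to$(v)$\to$(iv), exploiting throughout that $\mollifytime{\cdot}{h}$ is a convolution in the time variable only against the sub-probability kernel $K_h(t)=\frac{1}{h}e^{-t/h}\chi_{[0,\infty)}(t)$; in particular the spatial regularity and zero boundary trace of $v(\cdot,s)$ pass through the mollification slice-wise. For (i) I would apply H\"older's inequality with respect to the measure $\frac{1}{h}e^{(s-t)/h}\chi_{[0,t]}(s)\,\mathrm{d}s$, whose total mass equals $1-e^{-t/h}\le 1$, to obtain the pointwise bound $|\mollifytime{v}{h}(x,t)|^p\le \int_0^t \frac{1}{h}e^{(s-t)/h}|v(x,s)|^p\,\mathrm{d}s$; integrating in $(x,t)$, swapping the order of the $s$- and $t$-integrals via Fubini, and using $\int_s^T \frac{1}{h}e^{(s-t)/h}\,\mathrm{d}t\le 1$, then yields $\|\mollifytime{v}{h}\|_{L^p}\le\|v\|_{L^p}$. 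The $L^p$-convergence follows by the standard density trick: it holds pointwise, hence in $L^p$ by dominated convergence, for smooth compactly supported $v$, and extends to general $v$ via the contraction just proved.

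Property (ii) then reduces to (i) because the commutation $D\mollifytime{v}{h}=\mollifytime{Dv}{h}$ is legitimate (the mollification acts only in $t$), so both $\mollifytime{v}{h}$ and $\mollifytime{Dv}{h}$ converge in $L^p$; property (iii) is immediate from the same commutation together with the convex-combination structure of the kernel, which preserves membership in $W_0^{1,p}(\Omega)$ for a.e.\ $t$. For (v) I would rewrite $\mollifytime{v}{h}(x,t)=\frac{e^{-t/h}}{h}\int_0^t e^{s/h}v(x,s)\,\mathrm{d}s$ and take the distributional time derivative via the product rule and the fundamental theorem of calculus; this produces $\partial_t\mollifytime{v}{h}=\frac{1}{h}(v-\mollifytime{v}{h})$, whose right hand side lies in $L^p(\Omega_T,\R^N)$ thanks to (i). Property (iv) then follows from the embedding $W^{1,p}(0,T;L^p(\Omega,\R^N))\hookrightarrow C([0,T];L^p(\Omega,\R^N))$, applied to $\mollifytime{v}{h}$ and its newly identified time derivative. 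The corresponding assertions for the reverse mollification are proved by identical computations with obvious sign changes.

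The main technical hurdle is the rigorous derivation of the distributional identity in (v), because $v$ is merely $L^p$ and admits no classical time derivative. I would handle this by testing $\mollifytime{v}{h}$ against $\eta\in C_0^\infty(\Omega_T)$, moving the time derivative onto $\eta$, rearranging the resulting double integral via Fubini on the triangle $\{0\le s\le t\le T\}$, and integrating by parts in $t$ using $\partial_t(e^{(s-t)/h})=-\frac{1}{h}e^{(s-t)/h}$ to split the outcome into the two pieces corresponding to $\frac{1}{h}v$ and $-\frac{1}{h}\mollifytime{v}{h}$; the boundary contributions vanish thanks to the compact support of $\eta$. Everything else in the lemma reduces to bookkeeping around Jensen/H\"older bounds and density of smooth functions.
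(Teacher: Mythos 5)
The paper does not actually prove Lemma~\ref{lem:mollifier}; it simply cites the references~\cite[Lemma 2.9]{Kinnunen-Lindqvist} and~\cite[Appendix B]{BDM:pq} for these standard facts about exponential time mollification. Your proof is therefore a self-contained argument that the paper omits, and it is essentially correct: the Jensen/H\"older bound against the sub-probability kernel $\tfrac1h e^{(s-t)/h}\chi_{[0,t]}(s)\,\mathrm{d}s$ with mass $1-e^{-t/h}\le 1$, followed by Fubini and $\int_s^T \tfrac1h e^{(s-t)/h}\,\mathrm{d}t\le 1$, does give the contraction in (i); the $3\epsilon$-argument with smooth data handles convergence; commuting $D$ past the purely temporal convolution reduces (ii) to (i) and (iii) to the fact that the Bochner integral of $W_0^{1,p}(\Omega,\R^N)$-valued slices stays in that closed subspace; the distributional derivation of (v) by testing against $\eta\in C_0^\infty(\Omega_T)$, applying Fubini over $\{0\le s\le t\le T\}$ and integrating by parts in $t$ is correct; and (iv) then follows from $W^{1,p}(0,T;L^p)\hookrightarrow C([0,T];L^p)$ together with (i) and (v). The only point worth adding for completeness is that in (i) the convergence near $t=0$ is not pointwise (one always has $\mollifytime{v}{h}(\cdot,0)=0$), but as you implicitly use, the $L^p(\Omega_T)$ convergence is unaffected because the bad set $\Omega\times(0,\delta)$ has small measure and the mollification is dominated; and the statements for the reverse mollification really are mirror images. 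All in all this is a clean, correct filling-in of a cited lemma, organized a little differently (you prove (v) before (iv)) but mathematically the natural route.
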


\begin{remark}
Observe that similar properties hold also for mollification defined as
$$
\mollifytime{v}{h}(x,t)
	:= e^{-\frac{t}{h}} v_o  + \frac 1 h \int_0^t e^{\frac{s-t}{h}} v(x,s) \mathrm{d}s
$$
for $v_o \in L^1(\Omega,\R^N)$. One advantage of this formula is that we can compute
$$
\partial_t \mollifytime{v}{h}(x,t) = \frac 1 h \int_0^t e^{\frac{s-t}{h}} \partial_s v(x,s) \mathrm{d}s
$$
under suitable assumptions, see~\cite[Appendix B, Lemma B.3]{BDM:pq}. From this together with Lemma~\ref{lem:mollifier} one can deduce convergences for the time derivative as well provided that it exists in an appropriate space. We will exploit this in the global case.
\end{remark}

\section{Continuity in time and mollified formulation}
In this section we will prove that weak solutions have representatives that are continuous in time, according to Remarks~\ref{rem:cont_representative_loc} and~\ref{rem:cont_representative_glob}.

\subsection{Continuity in time for local problem} \label{section:loc_continuity}
\hfill \\[1ex]
In order to prove the continuity in time of weak solution according to Definition~\ref{def:weak_solution} as noted in Remark~\ref{rem:cont_representative_loc}, we will use the following Lemma, which can be found in \cite[Lemma 2.12]{Sturm}, \cite[Lemma 3.4]{Singer-Vestberg}, \cite[Lemma 3.8]{Vespri-Vestberg}. We include the proof for the continuity for completeness, where we take the approach of \cite{Vespri-Vestberg}. Observe that in the local case we will use mollifications for $\u^m$ defined by 
\begin{align*}
\mollifytime{\u^m}{h}(x,t)
	:= \frac 1 h \int_{\tau_1}^t e^{\frac{s-t}{h}} \u^m(x,s) \mathrm{d}s,
\end{align*}
for $t \geq \tau_1 $, in which $\tau_1 > 0$ is fixed. This is due to the fact that $\u^m$ is only locally integrable. Similarly, define the reverse time mollification in time by
\begin{align*}
\mollifytime{\u^m}{\bar h}(x,t)
	:= \frac 1 h \int_t^{\tau_2} e^{\frac{t-s}{h}} \u^m(x,s) \mathrm{d}s,
\end{align*}
for $t \leq \tau_2 < T$.

\begin{lemma}\label{lem:continuity_local_lem}
Let $\mathcal V$ be the set of all $v \in C^0 \big( (0,T), L^{1+m}_{\loc}(\Omega, \R^N) \big)$ such that
\begin{align*}
\v^m \in L^2_{\loc} \big( 0,T; W^{1,2}_{\loc}(\Omega, \R^N) \big)
\quad\text{ and }\quad
\partial_t \v^m \in L^\frac{m+1}{m}_{\loc}(\Omega_T, \R^N).
\end{align*}
Then, for a weak solution $u$ according to Definition~\ref{def:weak_solution},
%\begin{align*}
%- \iint_{\Omega_T} \partial_t \zeta I(u,v) \dx \dt
%&= \iint_{\Omega_T} \zeta \partial_t v^m \cdot (v-u)
%+  \A(x,t,u,D\u^m) \cdot D \big( \zeta (v^m - u^m) \big) \dx \dt
%\end{align*}
\begin{align*}
 \iint_{\Omega_T} \partial_t \zeta I(u,v) \dx \dt
&= \iint_{\Omega_T} \zeta \partial_t \v^m \cdot (u-v)
+  \A(x,t,u,D\u^m) \cdot D \big( \zeta (\u^m - \v^m) \big) \dx \dt
\end{align*}
holds true for all $v \in \mathcal V,
\; \zeta \in C_0^\infty(\Omega_T)$.
%T or just W 1 infinity in time, smooth cutoff in space (with $\zeta(0)=0=\zeta(T)$, that's obvious now)
\end{lemma}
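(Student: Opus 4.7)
The identity is the rigorous form of testing the weak formulation~\eqref{weak-solution} with $\varphi = \zeta(\u^m - \v^m)$. Formally, $\iint u \cdot \partial_t \varphi = \iint \A \cdot D\varphi$; expanding the parabolic side by means of the pointwise identity $u \cdot \partial_t \u^m = \tfrac{m}{m+1}\partial_t |u|^{m+1}$ (valid for smooth $u$) and the analogous one for $v$, then integrating by parts in $t$ against $\partial_t \zeta$, reassembles the scalar densities $|u|^{m+1}$, $|v|^{m+1}$, $\u^m v$, $\v^m u$ into the Bregman density $I(u,v)$ and leaves $-\iint \zeta \partial_t \v^m (u-v)$ behind, matching the statement. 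The direct calculation is forbidden because $\u^m$ lacks a weak time derivative in any space in which $\varphi$ would be admissible, so the whole task is to justify this computation by regularizing in time.

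\textbf{Setup.} Choose $0 < \tau_1 < \tau_2 < T$ with $\operatorname{supp} \zeta \subset \Omega \times (\tau_1, \tau_2)$, and use the mollifications $\mollifytime{\cdot}{h}$, $\mollifytime{\cdot}{\bar h}$ adapted to this sub-interval, as in the remark preceding the lemma. Plug
\[
\varphi_h := \zeta\bigl(\mollifytime{\u^m}{\bar h} - \v^m\bigr)
\]
into \eqref{weak-solution}; admissibility follows from Lemma~\ref{lem:mollifier}. Using $\partial_t \mollifytime{\u^m}{\bar h} = \tfrac{1}{h}(\mollifytime{\u^m}{\bar h} - \u^m)$, the parabolic side splits as
\begin{align*}
\iint u \cdot \partial_t \varphi_h
&= \iint \partial_t\zeta\, u(\mollifytime{\u^m}{\bar h} - \v^m) - \iint \zeta u \partial_t \v^m \\
&\quad + \tfrac{1}{h}\iint \zeta u \cdot (\mollifytime{\u^m}{\bar h} - \u^m).
\end{align*}
The first two summands pass to their natural $h=0$ limits by the $L^2_{\loc}$-convergence $\mollifytime{\u^m}{\bar h} \to \u^m$ from Lemma~\ref{lem:mollifier} and dominated convergence (using $\partial_t \v^m \in L^{(m+1)/m}_{\loc}$ and $u \in L^{m+1}_{\loc}$), while the elliptic side converges to $\iint \A \cdot D(\zeta(\u^m - \v^m))$ by the strong convergence of $\mollifytime{\u^m}{\bar h}$ in $L^2_{\loc}(0,T;W^{1,2}_{\loc})$ together with $\A \in L^2_{\loc}$.

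\textbf{Key step and main obstacle.} The remaining third summand is of indeterminate $1/h$-form and is where all the work lies. The plan is to combine the adjointness identity $\iint f \mollifytime{g}{\bar h}\dx\dt = \iint \mollifytime{f}{h} g\dx\dt$ (immediate from Fubini), which transfers the backward mollification onto the product $\zeta u$, with the bona fide chain rule
\[
\partial_t \bigl(\tfrac{1}{m+1}|\mollifytime{u}{h}|^{m+1}\bigr) = |\mollifytime{u}{h}|^{m-1}\mollifytime{u}{h} \cdot \partial_t \mollifytime{u}{h},
\]
now available since $\mollifytime{u}{h}$ admits a pointwise time derivative in $L^{m+1}_{\loc}$. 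After a further integration by parts against $\partial_t\zeta$ and exploitation of $\mollifytime{u}{h} \to u$ in $L^{m+1}_{\loc}$, this contribution converges as $h \to 0$ to $-\tfrac{m}{m+1}\iint \partial_t\zeta\, |u|^{m+1}$. Reassembling this with the limits of the other two parabolic summands and the elliptic side produces the monomials $|u|^{m+1}$, $|v|^{m+1}$, $\u^m v$ and $\v^m u$ in the exact proportions of $I(u,v)$, yielding the claimed identity. The main obstacle is precisely this passage to the limit: naive estimates break on the factor $1/h$, and one must track that the $\mathcal O(1)$-sized cross terms produced by the adjointness swap and the chain-rule move cancel against each other so that only the correct finite contribution $-\tfrac{m}{m+1}\partial_t\zeta\, |u|^{m+1}$ survives.
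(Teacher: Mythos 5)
You correctly identify the skeleton (test with a time-mollified version of $\u^m$ inside $\varphi$, kill the bad $1/h$-term, pass to the limit, and reassemble the Bregman density $I(u,v)$), but the \textbf{key step is wrong}: the problematic $1/h$-term does not cancel, and the paper never claims it does. What it does is extract a sign from it. Concretely, testing with $\varphi = \zeta(\v^m - \mollifytime{\u^m}{h})$, the paper writes
\begin{align*}
-\zeta u\cdot\partial_t\mollifytime{\u^m}{h}
= -\zeta\,\mollifytime{\u^m}{h}^{1/m}\cdot\partial_t\mollifytime{\u^m}{h}
  \;+\; \zeta\bigl(\mollifytime{\u^m}{h}^{1/m}-u\bigr)\cdot\partial_t\mollifytime{\u^m}{h}.
\end{align*}
Using $\partial_t\mollifytime{\u^m}{h} = \tfrac{1}{h}(\u^m-\mollifytime{\u^m}{h})$, the second summand equals $-\tfrac 1 h\,\zeta\,(a-b)\cdot(\a^m-\b^m)$ with $a=u$, $b=\mollifytime{\u^m}{h}^{1/m}$, which is a sign-definite quantity (by the elementary monotonicity $(a-b)\cdot(\a^m-\b^m)\ge 0$) — not a quantity that cancels against something else. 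It is simply dropped, yielding the inequality $\le$. The first summand is exactly $-\tfrac{m}{m+1}\zeta\,\partial_t|\mollifytime{\u^m}{h}|^{(m+1)/m}$, handled by integration by parts. The reverse inequality comes from a second run with the backward mollification $\mollifytime{\u^m}{\bar h}$, which flips the sign of the dropped term. Your plan has no such two-sided symmetrization and no use of the monotonicity; an argument that tries to force equality in a single pass will fail precisely because the dropped term has a sign, not a vanishing limit.

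There are also two technical mismatches in your key step. First, after the Fubini/adjointness swap, what appears is $\mollifytime{\zeta u}{h}$, the mollification of the \emph{product} $\zeta u$, which is not $\zeta\,\mollifytime{u}{h}$ (since $\zeta$ depends on $t$); yet your chain rule is stated for $\mollifytime{u}{h}$. Second, your chain rule mollifies $u$ and then raises to powers, whereas the quantity that actually appears (and that the paper works with) is the $1/m$-power of the mollified power, $\mollifytime{\u^m}{h}^{1/m}$; the relevant identity is $\mollifytime{\u^m}{h}^{1/m}\cdot\partial_t\mollifytime{\u^m}{h} = \tfrac{m}{m+1}\partial_t|\mollifytime{\u^m}{h}|^{(m+1)/m}$, not the one you wrote. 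In short: the missing ideas are (i) the monotonicity inequality $(a-b)\cdot(\a^m-\b^m)\ge 0$ to turn the $1/h$-term into a sign-definite one that can be discarded, and (ii) the two-sided argument by swapping forward and backward mollifications to upgrade the resulting inequality to the claimed equality.
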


%Other version:
%\begin{lemma}
%	Let $V$ be the set of all $v \in C^0 \big( [0,T], L^{1+m}(\Omega) \big)$ such that
%	\begin{align*}
%	&\v^m \in L^2 \big( 0,T; W^{1,2}(\Omega) \big), \\
%	&\partial_t \v^m \in L^{(m+1)/m}(\Omega_T), \\
%	&\v^m - \u^m \in W_0^{1,2}(\Omega) \text{ for a.e. } t \in (0,T).
%	\end{align*}
%	Then,
%	\begin{align*}
%	- \iint_{\Omega_T} \partial_t \zeta I(u,v) \dx \dt
%	&= \iint_{\Omega_T} \zeta \partial_t v^m \cdot (v-u)
%	+ \zeta \A(x,t,u,D\u^m) \cdot D(v^m - u^m) \dx \dt
%	\end{align*}
%	for all $v \in V, \; \zeta \in W^{1,\infty}([0,T],\mathbb R_{\geq 0})$ with $\zeta(0)=0=\zeta(T)$.
%\end{lemma}

\begin{proof}
We will test the weak equation \eqref{weak-solution} for $u$ with $\varphi = \zeta( \v^m - \mollifytime {\u^m} h )$ with some small fixed $\tau_1 > 0$ in the mollifier.
%Due to the definition of the mollification also this difference is in $W_0^{1,2}(\Omega)$ for a.e. $t \in (0,T)$.
We first inspect the parabolic part of the equation:
\begin{align*}
\iint_{\Omega_T} u \cdot \partial_t \varphi \dx \dt
&= \iint_{\Omega_T} \partial_t \zeta u \cdot
( \v^m - \mollifytime {\u^m} h )
+ \zeta u
\cdot \partial_t  (\v^m - \mollifytime {\u^m} h )
\dx \dt \\
&=\iint_{\Omega_T} \partial_t \zeta u \cdot
( \v^m - \mollifytime {\u^m} h )
+ \zeta u  \cdot \partial_t  \v^m \dx \dt \\
&\quad +\iint_{\Omega_T}- \zeta \mollifytime{\u^m}{h}^{1/m}
\cdot \partial_t \mollifytime{\u^m}{h}
+ \zeta ( \mollifytime{\u^m}{h}^{1/m} - u )
\cdot \partial_t \mollifytime {\u^m} h )
\dx \dt \\
&\leq \iint_{\Omega_T} \partial_t \zeta u \cdot
( \v^m - \mollifytime {\u^m} h )
+ \zeta u  \cdot \partial_t  \v^m \dx \dt \\
&\quad +\frac{m}{m+1}\iint_{\Omega_T} \partial_t \zeta | \mollifytime{\u^m} h |^{(m+1)/m}
\dx \dt \\
&\stackrel{h \downarrow 0}\longrightarrow
\iint_{\Omega_T} \partial_t \zeta ( u \cdot \v^m - |u|^{m+1})
+ \zeta u \cdot \partial_t \v^m
+ \frac{m}{m+1} \partial_t \zeta |u|^{m+1} \dx \dt \\
&= \int_{\Omega_T} \zeta \partial_t \v^m \cdot (u-v)
- \partial_t \zeta I(u,v) \dx \dt.
\end{align*}
By comparing to the divergence part of the equation, we obtain the direction '$\leq$' of the claim. The other direction can be shown by taking the reverse time mollification in the test function $\varphi$.
\end{proof}

\begin{lemma}\label{lem:continuity_local}
The weak solution $u$ according to Definition~\ref{def:weak_solution} has a representative that belongs to class $C^0((0,T);L_{\loc}^{1+m}(\Omega, \R^N))$. 
\end{lemma}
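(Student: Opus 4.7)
\emph{Plan.} The strategy is to establish $L^2_{\loc}$-continuity of $t\mapsto\u^{(m+1)/2}(\cdot,t)$ and then transfer it to $L^{m+1}_{\loc}$-continuity of $u$ itself via the algebraic bounds \eqref{eq:boundary_term_ineq} and \eqref{eq:pull_exponent_inside}. The central device is Lemma~\ref{lem:continuity_local_lem} applied to the \emph{time-independent} test function $v(x,s):=u(x,t_1)$ for a.e. fixed $t_1\in(0,T)$; for such $t_1$ one has $\u^m(\cdot,t_1)\in W^{1,2}_{\loc}(\Omega,\R^N)$ and $\partial_s\v^m\equiv0$, so $v\in\mathcal V$.

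Concretely, I would take $\zeta(x,s)=\eta(x)\psi_\delta(s)$ with $\eta\in C_0^\infty(\Omega)$ a spatial cutoff and $\psi_\delta$ a piecewise linear approximation of $\chi_{[t_1,t_2]}$. Plugging this into Lemma~\ref{lem:continuity_local_lem} and letting $\delta\to0$ at Lebesgue points of the relevant time maps yields
\begin{align*}
\int_\Omega\eta\,I(u(\cdot,t_2),u(\cdot,t_1))\dx = -\int_{t_1}^{t_2}\!\!\int_\Omega\A(x,s,u,D\u^m)\cdot D\big(\eta(\u^m(\cdot,s)-\u^m(\cdot,t_1))\big)\dx\ds,
\end{align*}
valid for Lebesgue a.e. $t_1<t_2$. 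Cauchy-Schwarz and \eqref{growth} control the right-hand side by a product of two $L^2$-norms, the first of which, $\bigl(\int_{t_1}^{t_2}\!\!\int_{\spt\eta}|D\u^m|^2\dx\ds\bigr)^{1/2}$, vanishes as $|t_2-t_1|\to0$ by absolute continuity of the Lebesgue integral, while the second remains bounded uniformly for $t_1,t_2$ in any compact sub-interval of $(0,T)$. Hence $\int_\Omega\eta\,I(u(t_2),u(t_1))\dx\to0$ as $|t_2-t_1|\to0$, and by \eqref{eq:boundary_term_ineq} this gives $L^2_{\loc}$-continuity of $t\mapsto\u^{(m+1)/2}(\cdot,t)$ at a.e. $t_1$, extended everywhere by uniform continuity on compact subintervals and passage to a continuous representative.

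Transferring continuity from $\u^{(m+1)/2}$ to $u$: for $m\ge1$, \eqref{eq:pull_exponent_inside} with $\alpha=(m+1)/2$ immediately gives $\|u(t_2)-u(t_1)\|_{L^{m+1}(\Omega')}^{(m+1)/2}\le C\|\u^{(m+1)/2}(t_2)-\u^{(m+1)/2}(t_1)\|_{L^2(\Omega')}$ for $\Omega'\Subset\Omega$. For $m\in(m_c,1)$, where $(m+1)/2<1$ and \eqref{eq:pull_exponent_inside} does not apply, I would instead use the reverse-type pointwise bound $|a-b|\le C\max(|a|,|b|)^{(1-m)/2}|\boldsymbol{a}^{(m+1)/2}-\boldsymbol{b}^{(m+1)/2}|$, combined with Hölder's inequality with conjugate exponents $\tfrac{2}{1-m}$ and $\tfrac{2}{1+m}$, exploiting the local $L^{m+1}$-integrability of $u$ built into Definition~\ref{def:weak_solution}.

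The main obstacles are twofold: (i) making the $\delta\to0$ passage rigorous, which must be performed at joint Lebesgue points of $t_1$ and $t_2$ with respect to several auxiliary maps (and then extending from a.e. continuity to genuine continuity by uniform continuity on compact subintervals); and (ii) the singular regime $m<1$, where \eqref{eq:pull_exponent_inside} must be replaced by the Hölder-interpolation step above, essentially using the local $L^{m+1}$ control already present in the definition of weak solution.
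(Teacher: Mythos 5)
Your plan of testing Lemma~\ref{lem:continuity_local_lem} with the frozen slice $v(x,s)=u(x,t_1)$ produces, after the $\delta\to0$ passage, the identity
\begin{align*}
\int_\Omega\eta\,I\big(u(t_2),u(t_1)\big)\dx
=-\int_{t_1}^{t_2}\!\!\int_\Omega \A(x,s,u,D\u^m)\cdot D\big(\eta(\u^m(\cdot,s)-\u^m(\cdot,t_1))\big)\dx\ds
\end{align*}
for a.e.~$t_1<t_2$, and this identity is correct. The gap is in the claim that the right-hand side factors as ``(something that tends to $0$) $\times$ (something uniformly bounded in $t_1,t_2$).'' When you expand the derivative, one of the resulting terms is
\begin{align*}
\int_{t_1}^{t_2}\!\!\int_\Omega \eta\,\A(x,s,u,D\u^m)\cdot D\u^m(\cdot,t_1)\dx\ds,
\end{align*}
and after Cauchy--Schwarz/Young this produces a contribution of size $(t_2-t_1)\,\norm{D\u^m(\cdot,t_1)}_{L^2(\spt\eta)}^2$. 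The quantity $\norm{D\u^m(\cdot,t_1)}_{L^2(\spt\eta)}$ is finite for a.e.~$t_1$ but is only in $L^2$ with respect to $t_1$, not in $L^\infty$; nothing in Definition~\ref{def:weak_solution} or Lemma~\ref{lem:caccioppoli} gives a time-uniform bound on it. So you obtain continuity of $t\mapsto u(\cdot,t)$ at a.e.\ fixed $t_1$, but not a modulus of continuity that is uniform on compact subintervals.

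That is a genuine obstruction, not a technicality: continuity at a.e.~point does not yield a continuous representative (think of a monotone jump function with a dense set of jump times; it is continuous off a null set but cannot be redefined on a null set to become continuous). Your parenthetical ``extended everywhere by uniform continuity on compact subintervals'' is exactly the step your estimates do not deliver. This is precisely why the paper does not freeze $u$ at a time slice but instead tests against the time mollification $w_h=\mollifytime{\u^m}{\bar h}^{1/m}$. Each $w_h$ is automatically a continuous $L^{m+1}_{\loc}$-valued curve (Lemma~\ref{lem:mollifier}(iv)), and the cut-off $\xi=\xi_{\eps,\tau}$ is chosen so that after sending $\eps\downarrow0$ the error bound on $\int_{K\times\{\tau\}}I(u,w_h)\dx$ is \emph{independent of $\tau$}, depending only on $h$. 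This gives the $\sup_\tau$ estimate~\eqref{eq:cont_bdry_term_vanishes}, hence $w_h\to u$ uniformly in $L^\infty_{\loc}(L^{m+1}_{\loc})$, and $u$ inherits continuity as a uniform limit of continuous curves. Your algebraic transfer from $\u^{(m+1)/2}$-continuity to $u$-continuity (including the Hölder interpolation in the singular range) is fine and matches the paper, but it is downstream of the missing uniformity.
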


\begin{proof}
Let $K \Subset \Omega$ and $\eta \in C_0^\infty(\Omega)$ with $\eta = 1$ on $K$, $|D\eta| \leq C(K)$. Let $\tau \in (0, \tfrac 1 2 T)$ and $\varepsilon>0$ such that $\tau+\varepsilon < \tfrac 1 2 T$. Take $\psi \in C^\infty([0,\infty])$ with $\psi =1$ on $[0,\tfrac 1 2 T]$, $\psi = 0$ on $[\tfrac 3 4 T, T]$ and $|\psi'| \leq 8T^{-1}$.
%{\color{gray}
%. Let $\eta \in C_0^\infty(\Omega)$ with $\eta =1$ on some $K \subset \subset \Omega$. \\
%or:
%Let $K \subset \subset \Omega$. Let $\tau \in (0,T)$ and $\varepsilon>0$ such that $\tau + \varepsilon < T$. Let $0<t_0<\tau$. Define a spatial cutoff function $\eta \in C_0^\infty(\Omega)$ with $\eta =1$ on $K$ and $|D\eta| \leq C(K)$. Also take $\psi \in C^\infty([0,T])$ with $\psi = 0$ on $[0,t_0]$ and $\psi = 1$ on $[\tau,T]$.}
Further define
\begin{align*}
\xi(t) = \xi_{\varepsilon,\tau}(t) := \begin{cases}
0, & t < \tau, \\
\tfrac 1 \varepsilon (t - \tau), & t \in [\tau, \tau + \varepsilon], \\
1, & t > \tau + \varepsilon.
\end{cases}
\end{align*}
\newcommand{\w}{\boldsymbol{w}}
We apply Lemma \ref{lem:continuity_local_lem} with $\zeta = \eta \psi \xi$, $w_h = \mollifytime{\u^m}{\bar h}^{1/m}$, and $\tau_2 \in (\frac 3 4 T,T)$ in the mollifier. Observe that $w_h \in \mathcal  V$ defined in Lemma~\ref{lem:continuity_local_lem}. Especially $w_h \in C^0([t_1,\frac 3 4 T];L_{\loc}^{1+m}(\Omega,\R^N))$ for any $h>0$ and $t_1 > 0$ can be seen as follows. If $m > 1$ this is a direct consequence of~\eqref{eq:pull_exponent_inside} and the fact that $\mollifytime{\u^m}{\bar h} \in C^0([t_1,\frac 3 4 T];L_{\loc}^\frac{1+m}{m}(\Omega,\R^N))$ together with assumption $u \in L^{m+1}_{\loc}(\Omega_T,\R^N)$ and Lemma~\ref{lem:mollifier} (iv). In the case $m < 1$, we make use of second inequality in~\eqref{eq:boundary_term_ineq} and H\"older's inequality. Fixing $0< t_1 \leq \tau$ and defining $E := \mathrm{supp}\, \eta \times (t_1,\frac 3 4 T)$, this yields
\begin{align*}
\frac 1 \varepsilon &\iint_{\Omega \times (\tau,\tau+\varepsilon)}
	\eta I(u,w_h) \dx \dt
=
\iint_{\Omega_T} \eta \partial_t \xi I(u,w_h) \dx \dt \\
	&= \iint_{\Omega_T} \left( \zeta \partial_t \w_h^m (u-w_h) 
		+ \A(x,t,u,D\u^m) \cdot D\big( \zeta (\u^m - \w_h^m) \big) 
		-I(u,w_h)  \eta \xi \partial_t \psi \right)
		\dx \dt \\
	&\leq  C \iint_{E} \left( |D\u^m| \big(
		|D\u^m - D\w_h^m| + |D\eta||\u^m - \w_h^m|
	\big)
		+ \frac 8 T I(u,w_h) \right) \dx \dt \\
	&\leq C \iint_{E} \left( |D\u^m - D\w_h^m|^2
		+|\u^m - \boldsymbol{w_h}^m|^2
		+ I(u,w_h) \right) \dx \dt
\end{align*}
by applying H\"older's inequality and using the assumption $|D\u^m| \in L^2_{\loc}(\Omega_T)$. By the convergence properties of the time mollification, the first and second term on the right hand side vanish as $h \downarrow 0$. \\
For the last term, we must distinguish between two cases. If $m \geq 1$, we can apply estimate $I(u,w_h) \leq c |\u^m-\w_h^m|^{(1+m)/m}$ by \eqref{eq:boundary_term_ineq2}, which vanishes as $h \downarrow 0$. In the singular case $m <1$, by \eqref{eq:boundary_term_ineq2} we compute
\begin{align*}
&\iint_{E} I(u,w_n) \dx \dt \\
	&\quad\leq c \iint_{E} 
		|\u^m-\w_h^m| |u-w_h|
	\dx \dt \\
	&\quad\leq c \bigg( 
		\iint_{E}
		|\u^m-\w_h^m|^\frac{1+m}{m}
	\dx \dt
	\bigg)^\frac{m}{1+m}
	\bigg(
		\iint_{E}
		|u-w_h|^{1+m}
	\dx \dt
	\bigg)^\frac{1}{1+m}.
\end{align*}
Again, since $\u^m \in L^\frac{1+m}{m}_{\loc}(\Omega_T,\R^N)$ and $u \in L_{\loc}^{1+m}(\Omega_T,\R^N)$, these integrals vanish as $h \downarrow 0$. Notice that in the previous estimates, the right hand side does not depend on $\tau$. Thus,
\begin{align}\label{eq:cont_bdry_term_vanishes}
\lim_{h \downarrow 0}
\sup_{\tau \in (t_1,\frac 1 2 T)}
\int_{K \times \lbrace \tau \rbrace} I(u,w_h) \dx
= 0.
\end{align}
For the case $m \geq 1$, we use the inequalities \eqref{eq:boundary_term_ineq} and \eqref{eq:pull_exponent_inside} and see that
\begin{align*}
\sup_{\tau \in (t_1,\frac 1 2 T)}
	\int_{K \times \lbrace \tau \rbrace } |w_h - u|^{m+1} \dx
	&\leq C\sup_{\tau \in (t_1,\frac 1 2 T)} \int_{K \times \lbrace \tau \rbrace } I(u,w_h) \dx \rightarrow 0.
\end{align*}
In the case $m < 1$ we use inequalities~\eqref{eq:boundary_term_ineq} and~\eqref{eq:boundary_term_ineq2} to obtain
\begin{align*}
\int_{K \times \lbrace \tau \rbrace } &|w_h - u|^{m+1} \dx \\
&\leq  c\int_{K \times \lbrace \tau \rbrace } \left( |w_h| + |u| \right)^\frac{(1-m)(1+m)}{2} |\w_h^\frac{m+1}{2} - \u^\frac{m+1}{2}|^{m+1} \dx \\
&\leq c \left( \int_{K \times \lbrace \tau \rbrace } \left( |w_h| + |u| \right)^{m+1}\d x \right)^\frac{1-m}{2} \left(  \int_{K \times \lbrace \tau \rbrace }|\w_h^\frac{m+1}{2} - \u^\frac{m+1}{2}|^2 \dx \right)^\frac{m+1}{2} \\
&\leq c\left( \int_{K \times \lbrace \tau \rbrace } |w_h|^{m+1} \d x + \int_{K \times \lbrace \tau \rbrace } |u|^{m+1} \d x \right)^\frac{1-m}{2} \left(  \int_{K \times \lbrace \tau \rbrace }I(u, w_h) \dx \right)^\frac{m+1}{2}
\end{align*}
for any $\tau \in (t_1, \frac 1 2 T)$. By taking supremum over $\tau$ and passing to the limit $h \to 0$ the right hand side converges to zero by \eqref{eq:cont_bdry_term_vanishes}. Observe that the first term of the right hand side stays bounded since we have that $u \in L^\infty_{\loc} (0,T; L^{m+1}_{\loc} (\Omega,\R^N))$. This is true by Lemma~\ref{lem:caccioppoli} and since $u \in L_{\loc}^{m+1}(\Omega, \R^N)$ by definition. In addition, from the properties of the mollification it follows that
$$
\left\| \mollifytime{\u^m}{\bar h}^{1/m}  \right\|_{L^\infty (t_1,\frac 3 4 T; L^{m+1}(K, \R^N))} \leq \left\| u \right\|_{L^\infty (t_1, \frac 3 4 T; L^{m+1} (K, \R^N))},
$$
which implies that $w_h$ is uniformly bounded in $L^\infty (t_1,\frac 3 4 T; L^{m+1}_{\loc} (\Omega, \R^N))$.
%\textcolor{red}{What is the argument for the first term..? Sturm argues that $w_h$ is uniformly bounded in $L^\infty - L^{m+1}$.. Why does this hold?}
%In the case $m<1$, we use Lemma 2.4 from \cite{Sturm} to obtain the estimate
%\begin{align*}
%&\sup_{\tau \in (0,\frac 1 2 T)}
%\int_{K \times \lbrace \tau \rbrace } |w - u|^{m+1} \dx \\
%	&\quad\leq \sup_{\tau \in (0,\frac 1 2 T)}
%	\bigg(
%		\int_{K \times \lbrace \tau \rbrace } I(u,w) \dx
%		+ \bigg( \int_{K \times \lbrace \tau \rbrace } I(u,w) \dx
%			\bigg)^{\frac{m+1}2}
%		\bigg( \int_{K \times \lbrace \tau \rbrace }
%			\boldsymbol{w}^{m+1} \dx
%			\bigg)^{-\frac{m+1}2}
%	\bigg).
%\end{align*}
%As $w_h \in C^0([0,T],L^{1+m}(\Omega))$ and hence the last integral is bounded. 
%By \eqref{eq:cont_bdry_term_vanishes}, the right hand side converges to zero.
We can thus come to the same conclusion as in the case $m \geq 1$.
In total, we have shown that
\begin{align*}
w_h = \mollifytime{\u^m}{\bar h}^{1/m} \longrightarrow u \quad\text{ in }
L^\infty(t_1,\tfrac 1 2 T;L^{1+m}(K, \R^N))
\end{align*}
as $h \downarrow 0$. Observe that $K \Subset \Omega$ and $t_1 > 0$ were arbitrary. Since $w_h$ is continuous map from $(t_1,t_2)$ to $L_{\loc}^{m+1}(\Omega,\R^N)$, we conclude that $u \in C^0((t_1,\frac 1 2 T ],L^{1+m}_{\loc}(\Omega, \R^N))$ as uniform limit of continuous functions from $(t_1,t_2)$ to $L_{\loc}^{m+1}(\Omega,\R^N)$, after possible redefinition in a set of measure zero. To obtain the result on the full time interval, one can either modify cut-off functions $\xi$ and $\psi$ so that $\tau$ can be arbitrarily close to $T$ on a compact subinterval of $(0,T)$, or apply the same arguments with usual time mollifications $w_h = \mollifytime{\u^m}{h}^{1/m}$
%TODO (((???))) does this work? did i do it wrong here?
and reversed cut-off functions, as suggested in \cite[Lemma 3.9]{Vespri-Vestberg}. This completes the proof.
\end{proof}

\subsection{Continuity in time in global case}
%
%For the global case, we make use of mollifications defined as
%%\\
%\begin{align*}
%\mollifytime{\g^m}{h}(x,t)
%	:= e^{-\frac{t}{h}} \g^m(x,0) + \frac 1 h \int_{0}^t e^{\frac{s-t}{h}} \g^m(x,s) \mathrm{d}s,
%\end{align*}
%and
%\begin{align*}
%\mollifytime{\u^m}{h}(x,t)
%	:= e^{-\frac{t}{h}} \g^m(x,0) + \frac 1 h \int_{0}^t e^{\frac{s-t}{h}} \u^m(x,s) \mathrm{d}s.
%\end{align*}

\begin{lemma}
The weak solution $u$ according to Definition~\ref{def:global_weak_solution} has a representative that belongs to class $C^0([0,T];L^{1+m}(\Omega, \R^N))$.
\end{lemma}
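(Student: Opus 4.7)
The plan is to follow the proof of Lemma~\ref{lem:continuity_local} almost verbatim, but to exploit the \emph{global} regularity $\u^m \in L^2(0,T;W^{1,2}(\Omega,\R^N))$ together with the zero-trace relation $(\u^m - \g^m)(\cdot,t) \in W^{1,2}_0(\Omega)$ to dispense with the spatial cut-off $\eta$, and to handle the endpoints $t=0$ and $t=T$ via the Cauchy-Dirichlet data combined with the modified mollifier from the Remark following Lemma~\ref{lem:mollifier}.

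First, I would establish the global counterpart of Lemma~\ref{lem:continuity_local_lem}: for every $v$ with $\v^m - \g^m \in L^2(0,T;W^{1,2}_0(\Omega,\R^N))$, $\partial_t \v^m \in L^{(m+1)/m}(\Omega_T,\R^N)$, and every $\zeta \in C_0^\infty((0,T))$,
\begin{align*}
\iint_{\Omega_T} \partial_t \zeta \, I(u,v) \dx \dt
&= \iint_{\Omega_T} \zeta \, \partial_t \v^m \cdot (u-v) \dx \dt \\
&\quad + \iint_{\Omega_T} \A(x,t,u,D\u^m) \cdot D\bigl(\zeta(\u^m - \v^m)\bigr) \dx \dt.
\end{align*}
The derivation parallels that of Lemma~\ref{lem:continuity_local_lem}: one tests the weak equation for $u$ with $\varphi = \zeta\bigl(\v^m - \mollifytime{\u^m - \g^m}{h}\bigr)$, which is admissible because $\zeta(\u^m - \v^m)$ vanishes on the lateral boundary, and then passes to the limit $h \downarrow 0$ via Lemma~\ref{lem:mollifier}.

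Next, fix $\delta \in (0,T/2)$ and a time cut-off $\psi \in C_0^\infty((0,T))$ with $\psi \equiv 1$ on $[\delta, T-\delta]$, and apply the identity with $\zeta = \psi \, \xi_{\varepsilon,\tau}$ and $v = w_h := \mollifytime{\u^m}{h}^{1/m}$, where the mollifier is the modified one from the Remark after Lemma~\ref{lem:mollifier}, started at $v_o = \g^m(\cdot, 0)$. Assumption~\eqref{assumption:g} ensures that $\partial_t \boldsymbol{w}_h^m$ lies in the right Lebesgue space and that $\boldsymbol{w}_h^m - \g^m \in L^2(0,T;W^{1,2}_0(\Omega,\R^N))$, so $w_h$ is admissible. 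Letting $\varepsilon \to 0$ and then $h \to 0$ and repeating the integration-by-parts and the algebraic-inequality bookkeeping from the proof of Lemma~\ref{lem:continuity_local} (with the split into $m \geq 1$ and $m < 1$ via~\eqref{eq:boundary_term_ineq2} and H\"older's inequality) yields
\begin{align*}
\lim_{h \downarrow 0}\; \sup_{\tau \in [\delta, T-\delta]} \int_\Omega I(u, w_h)(\cdot, \tau) \dx = 0.
\end{align*}
Combined with~\eqref{eq:boundary_term_ineq} and~\eqref{eq:pull_exponent_inside}, this upgrades to $w_h \to u$ uniformly on $[\delta, T-\delta]$ in $L^{m+1}(\Omega,\R^N)$. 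Since $w_h \in C^0([0,T];L^{m+1}(\Omega,\R^N))$ by Lemma~\ref{lem:mollifier}(iv), we obtain a continuous $L^{m+1}(\Omega,\R^N)$-valued representative of $u$ on any compact subinterval of $(0,T)$.

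Finally, the endpoints are covered as follows. At $t=0$ the modified mollifier satisfies $w_h(\cdot, 0) = g(\cdot, 0)$, and by allowing $\tau \downarrow 0$ in the cut-off $\xi_{\varepsilon,\tau}$ the same estimate gives $\sup_{\tau \in [0,T-\delta]} \int_\Omega I(u,w_h)(\cdot,\tau) \dx \to 0$; the extension to $t=0$ is then forced by the initial-trace condition~\eqref{initial-boundary} together with~\eqref{eq:boundary_term_ineq}, which also identify $u(\cdot,0) = g(\cdot,0)$. At $t=T$ the symmetric argument with the reverse modified mollifier produces continuity, the value at $T$ being fixed a posteriori by the uniform Cauchy property of $w_h$. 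The principal obstacle is keeping the $\g^m$-corrections inside the parabolic part of the mollified identity consistent so that the estimate persists up to the endpoints; this is precisely what~\eqref{assumption:g} is designed to deliver, since it places $\partial_t \g^m$ in the Lebesgue space dictated by the parabolic term.
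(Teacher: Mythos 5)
The two genuinely novel ingredients you identify — dropping the spatial cutoff by using the global regularity, and anchoring a modified mollifier at $\g^m(\cdot,0)$ — are indeed what the paper does, but the specific way you implement them breaks the argument in two places.

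\textbf{Sign of the mollifier for the comparison function.} You propose to plug $v = w_h := \mollifytime{\u^m}{h}^{1/m}$, a \emph{forward} mollifier (anchored at $\g^m(\cdot,0)$), into the identity and then discard the monotone term. But for the forward mollifier $\partial_t \boldsymbol{w}_h^m = \tfrac1h(\u^m - \boldsymbol{w}_h^m)$, so
\[
\zeta\,\partial_t \boldsymbol{w}_h^m\cdot(u - w_h) = \tfrac{\zeta}{h}\,(\u^m - \boldsymbol{w}_h^m)\cdot(u - w_h) \geq 0.
\]
Since you are also keeping the increasing cutoff $\xi_{\varepsilon,\tau}$ (so $\partial_t\xi \geq 0$), the resulting inequality gives only a \emph{lower} bound for $\tfrac1\varepsilon\iint_{\Omega\times(\tau,\tau+\varepsilon)}I(u,w_h)$, which is useless. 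The paper instead uses the \emph{reverse} mollifier $\mollifytime{\u^m}{\bar\lambda}$ anchored at $\g^m(\cdot,T)$ for the comparison function, for which $\partial_t\mollifytime{\u^m}{\bar\lambda} = \tfrac1\lambda(\mollifytime{\u^m}{\bar\lambda} - \u^m)$, so that the monotone term is $\leq 0$ and can be dropped. (The forward mollifier with parameter $h$ is still present in the test function, but only as the piece that regularizes $\u^m$ itself; the comparison function must be mollified in the opposite time direction.) One could rescue the forward anchor by simultaneously reversing both $\xi$ and $\psi$, but that is not what you propose, and it would then control $\tau$ away from $0$, defeating the purpose.

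\textbf{Admissibility and form of the ``global identity''.} The test function you write, $\varphi = \zeta\bigl(\v^m - \mollifytime{\u^m - \g^m}{h}\bigr)$, has lateral boundary trace $\zeta\g^m \neq 0$, since $\v^m$ has trace $\g^m$ while $\mollifytime{\u^m - \g^m}{h}$ has trace $0$; your justification references $\zeta(\u^m - \v^m)$, which is indeed zero on the boundary, but that is a different function. The admissible choice requires mollifying $\g^m$ too, as in the paper's $\varphi = \zeta(\mollifytime{\u^m}{\bar\lambda} - \mollifytime{\u^m}{h} - \mollifytime{\g^m}{\bar\lambda} + \mollifytime{\g^m}{h})$. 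Once you do that, the parabolic part acquires genuine $\g^m$-correction terms (the paper's integrals II and III, involving $\g^m - \mollifytime{\g^m}{\bar\lambda}$ and $\partial_t(\g^m - \mollifytime{\g^m}{\bar\lambda})$ and $D(\g^m - \mollifytime{\g^m}{\bar\lambda})$), which are nontrivial and require the assumption~\eqref{assumption:g} to estimate. Your clean identity, which reproduces Lemma~\ref{lem:continuity_local_lem} verbatim without any $\g^m$-term, therefore does not hold in the global setting, and the paper does not claim it; the computation is done inline precisely because the boundary corrections do not package into the same formula.
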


\newcommand{\mtul}{\mollifytime{\u^m}{\bar \lambda}}
\newcommand{\mtgl}{\mollifytime{\g^m}{\bar \lambda}}
\newcommand{\mtuh}{\mollifytime{\u^m}{h}}
\newcommand{\mtgh}{\mollifytime{\g^m}{h}}

\begin{proof}
Let $\tau \in (0,\tfrac 1 2 T)$, $\varepsilon>0$ such that $\tau + \varepsilon < \tfrac 1 2 T$.
Define $\zeta=\psi \xi$ with $\psi, \xi$ as in Lemma~\ref{lem:continuity_local}. We test the weak formulation \eqref{weak-solution} for $u$ against the test function $\varphi = \zeta ( \mtul - \mtuh - \mtgl + \mtgh )$ for two different mollification parameters $\lambda >0, h>0$. Now the mollifications are defined as 
\begin{align*}
	\mollifytime{\u^m}{h}(x,t)
	&:= e^{-\frac{t}{h}}\g^m(x,0) +  \frac 1 h \int_{0}^t e^{\frac{s-t}{h}} \u^m(x,s) \mathrm{d}s, \\
\mollifytime{\g^m}{h}(x,t)
	&:= e^{-\frac{t}{h}}\g^m(x,0) + \frac 1 h \int_{0}^t e^{\frac{s-t}{h}} \g^m(x,s) \mathrm{d}s, 
\end{align*}
and
\begin{align*}
\mollifytime{\u^m}{\bar \lambda}(x,t)
	&:= e^\frac{t- T}{\lambda}\g^m(x,T) + \frac 1 \lambda \int_{t}^{T} e^\frac{t - s}{\lambda} \u^m(x,s) \mathrm{d}s, \\
	\mollifytime{\g^m}{\bar \lambda}(x,t)
	&:= e^\frac{t- T}{\lambda}\g^m(x,T) + \frac 1 \lambda \int_{t}^{T} e^\frac{t - s}{\lambda} \g^m(x,s) \mathrm{d}s.
\end{align*}
Notice that $\varphi(\cdot,t) \in W_0^{1,2}(\Omega,\R^N)$ for a.e. $t \in (0,T)$. %\textcolor{red}{Here we suppose that $\u^m(\cdot,0) - \g^m(\cdot,0) \in W^{1,2}_0(\Omega,\R^N)$ and $\u^m(\cdot,T) - \g^m(\cdot,T) \in W^{1,2}_0(\Omega,\R^N)$. Or should we use just $\g^m$ in the first term for both mollifications..?} 
For the parabolic part of the equation we have
\begin{align*}
\iint_{\Omega_T} u \cdot \partial_t \varphi \dx \dt
	&=
	\iint_{\Omega_T} \partial_t \zeta u \cdot
		( \mtul - \mtuh ) + \zeta u \cdot \partial_t ( \mtul - \mtuh ) \dx \dt \\
	&\quad - \iint_{\Omega_T} \partial_t \zeta u \cdot
	( \mtgl - \mtgh ) + \zeta u \cdot \partial_t ( \mtgl - \mtgh ) \dx \dt.
\end{align*}
The first line can be estimated as in Lemma~\ref{lem:continuity_local_lem}, while in the second line one can immediately pass to the limit $h \downarrow 0$. Thus we obtain
\begin{align*}
\iint_{\Omega_T} &\partial_t \xi I(u, \mtul^{1/m}) \dx \dt \\
	&\leq \iint_{\Omega_T} \zeta \partial_t \mtul
		\cdot ( u - \mtul^{1/m})
		+ \zeta \A(x,t,u,D\u^m) \cdot D \big( \u^m- \mtul ) \dx \dt \\
	&\quad + \iint_{\Omega_T} \partial_t \zeta u \cdot
		( \g^m - \mtgl )
		+ \zeta u \cdot \partial_t ( \g^m - \mtgl ) \dx \dt \\
	&\quad - \iint_{\Omega_T} \zeta \A(x,t,u,D\u^m)
		\cdot D \big( \g^m- \mtgl ) \dx \dt \\
		&\quad - \iint_{\Omega_T}\partial_t \psi I(u, \mtul^{1/m}) \dx \dt \\
	&=: \mathrm{I} +\mathrm{II} +\mathrm{III} + \mathrm{IV}.
\end{align*}
%\textcolor{red}{Here is something not working. For the very first term, sign is wrong. If you replace with reverse mollifications, the sign changes but inequality changes direction at the same time..}
Observe that the term on the left hand side is non-negative.  The first integral I together with IV will vanish due to the same reasoning as in Lemma \ref{lem:continuity_local}. The third integral III vanishes as $\lambda \downarrow 0$ due to the growth condition \eqref{growth}, the assumption $|D\u^m| \in L^2(\Omega_T)$ and Lemma~\ref{lem:mollifier} (ii). For II, we estimate
\begin{align*}
\mathrm{II}
	&\leq  \bigg( \frac 1 \varepsilon \iint_{\Omega \times (\tau, \tau+\varepsilon)} |u|^{m+1} \dx \dt \bigg)^\frac{1}{1+m}
		\bigg( \frac 1 \varepsilon \iint_{\Omega \times (\tau, \tau+\varepsilon)} | \mtgl - \g^m |^{\frac{1+m}{m}} \d x \d t
		\bigg)^\frac{m}{1+m} \\
	&\quad + \frac{8}{T} \bigg( \iint_{\Omega_T}
	|u|^{m+1} \dx \dt \bigg)^\frac{1}{m+1}
	\bigg( \iint_{\Omega_T} | \mtgl - \g^m |^{\frac{1+m}{m}}
	\bigg)^\frac{m}{1+m} \\
	&\quad + \bigg( \iint_{\Omega_T} |u|^{m+1} \dx \dt
	\bigg)^\frac{1}{m+1}
	\bigg( \iint_{\Omega_T} 
	|\partial_t \mtgl - \partial_t \g^m | ^{\frac{m+1}{m}} 
	\bigg)^\frac{m}{1+m}
\end{align*}
After passing to the limit $\eps \searrow 0$ the first term equals
\begin{align*}
\bigg(\int_{\Omega \times \{ \tau \}} |u|^{m+1} \dx \bigg)^\frac{1}{1+m}
		\bigg(\int_{\Omega \times \{ \tau\}} | \mtgl - \g^m |^{\frac{1+m}{m}} \d x 		\bigg)^\frac{m}{1+m}
\end{align*}
for a.e. $\tau \in (0,\frac 1 2 T)$. By Caccioppoli inequality in Lemma~\ref{lem:caccioppoli_global} (see also Remark~\ref{rem:caccioppoli_glob}) the assumption $u \in L^{1+m}(\Omega_T)$ and assumptions for $g$ the first integral is uniformly bounded in $\tau$, and the second integral vanishes as $\lambda \downarrow 0$ by properties of mollification. % \textcolor{red}{(Should be true as $g \in C - L^{1+m}$. To be confirmed and added to the properties in Lemma~\ref{lem:mollifier})}.
%
%
%\textcolor{red}{The first term changed a bit. We should have that $\mtgl(\tau) \to \g^m(\tau)$ in $L^\frac{1+m}{m}(\Omega)$ as $\lambda \searrow 0$ (this should be true as $g \in C - L^{1+m}$).} 
%
%Since we did not use the fact that $\u^m$ is continuous in time in the proof for the energy estimate in Lemma \ref{lem:caccioppoli_global}, we can conclude that
%\begin{align*}
% \frac 1 \varepsilon
%	 \iint_{\Omega \times (\tau, \tau+\varepsilon)} |u|^{m+1} \dx \dt
%	& \leq \sup_{\tau \in (0,T)}
%		\int_{\Omega \times \lbrace \tau \rbrace} |u|^{m+1} \dx
%	< \infty.
%\end{align*}
Using $u \in L^{1+m}(\Omega_T,\R^N)$, as well as $\g^m, \partial_t \g^m \in L^{(1+m)/m}(\Omega_T,\R^N)$, it follows that II vanishes as $\lambda \downarrow 0$. For the remaining terms, we can use the same arguments as in Lemma \ref{lem:continuity_local} to obtain
\begin{align*}
\lim_{\lambda \downarrow 0} \sup_{\tau \in (0,\frac 1 2 T)} \int_{\Omega} |u-\mtul^{1/m}|^{m+1} \dx
	 = 0.
\end{align*}
This proves that $u$ is a uniform limit of functions in $C^0([0,\frac 1 2 T];L^{1+m}(\Omega,\R^N))$, which implies the existence of continuous representative $u \in C^0([0,\frac 1 2 T]; L^{m+1}(\Omega,\R^N))$. For the interval $[\frac 1 2 T,T]$, we may use reversed mollifications and cut-off functions as mentioned at the end of the proof of Lemma~\ref{lem:continuity_local}. Finally, we obtain that $u \in C^0([0,T];L^{1+m}(\Omega,\R^N))$.
\end{proof}

\subsection{Mollified formulation}

In order to prove useful estimates we use the following mollified formulation of~\eqref{weak-solution}, which can be derived similarly as in~\cite{Boegelein-Duzaar-Gianazza}. We include the proof for expository purposes. In the local case we use mollifications
\begin{align*}
\mollifytime{u}{h}(x,t)
	:= \frac 1 h \int_{\tau_1}^t e^{\frac{s-t}{h}} u(x,s) \mathrm{d}s,
\end{align*}
and 
\begin{align*}
\mollifytime{\varphi}{\bar h}(x,t)
	:= \frac 1 h \int_{t}^T e^{\frac{t-s}{h}} \varphi(x,s) \mathrm{d}s,
\end{align*}
for solution $u$ and test function $\varphi$.
 %Mollified formulation is derived for the representative that is continuous in time. 

\begin{lemma}
For a local weak solution $u$ according to Definition~\ref{def:weak_solution}, one has
\begin{align}\label{eq:mollifiedEquation_loc}
\int_{\tau_1}^T \int_{\Omega} \big(
	\partial_t \mollifytime u h \cdot \varphi 
	+ \mollifytime{A(x,t,u,D\u^m)}h \cdot D \varphi
\big) \d x \d t 
	= \frac{1}{h} \int_\Omega u(\tau_1) \cdot \int_{\tau_1}^T e^{\frac {\tau_1- s}{ h}} \varphi\, \mathrm{d}s \d x
\end{align}
%such that $\varphi$ vanishes outside any relatively compact subdomain $K\times I$ of $\Omega_T$.
for all $\varphi \in L^2(0,T;W^{1,2}(\Omega,\mathbb{R}^N))$ with the property $\mathrm{supp }\,  \varphi \Subset \Omega_T $, and almost every $\tau_1 \in (0,T)$. In the case $m < \frac{n-1}{n+1}$, we suppose in addition that $\varphi \in L^\frac{1+m}{m}(\Omega_T, \mathbb R^N)$. If we take the representative $u \in C^0((0,T);L^{m+1}_{\loc}(\Omega,\R^N))$, the formulation holds for every $\tau_1 \in (0,T)$. %In any case the right hand side converges to zero as $h \downarrow 0$.
\end{lemma}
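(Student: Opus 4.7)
The plan is to test the weak formulation~\eqref{weak-solution} with an admissible cut-off of the reverse mollification $\mollifytime{\varphi}{\bar h}$, and then to exploit the Fubini identity
$$
\int_{\tau_1}^T f(t)\,\mollifytime{\varphi}{\bar h}(t)\,\dt
= \int_{\tau_1}^T \mollifytime{f}{h}(t)\,\varphi(t)\,\dt ,
$$
an immediate consequence of interchanging the order of integration on $\{(t,s):\tau_1\le t\le s\le T\}$, in order to transfer the mollification from the test function onto $u$ and $\mathbf A(x,t,u,D\u^m)$.

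To make $\mollifytime{\varphi}{\bar h}$ admissible, introduce a piecewise linear temporal cut-off $\eta_\delta$ with $\eta_\delta(t)=0$ for $t\le \tau_1-\delta$, $\eta_\delta(t)=1$ for $t\ge \tau_1$, and $\eta'_\delta\equiv 1/\delta$ on $(\tau_1-\delta,\tau_1)$. For $h$ smaller than the distance of $\mathrm{supp}\,\varphi$ from $\{t=T\}$, the product $\eta_\delta\mollifytime{\varphi}{\bar h}$ inherits compact spatial support from $\varphi$, vanishes for $t\le\tau_1-\delta$ by the cut-off, and vanishes near $t=T$ since $\varphi$ does; Lemma~\ref{lem:mollifier} guarantees it belongs to the weakened test-function class. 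In the singular regime $m<\tfrac{n-1}{n+1}$ the extra hypothesis $\varphi\in L^{(1+m)/m}$ is precisely what is needed for the pairings of $u$ with $\mollifytime{\varphi}{\bar h}$ and $\partial_t\mollifytime{\varphi}{\bar h}$ to be integrable by H\"older's inequality when only $u\in L^{1+m}_{\loc}$ is known.

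Substituting $\eta_\delta\mollifytime{\varphi}{\bar h}$ into~\eqref{weak-solution} and isolating the $\eta'_\delta$-term yields
$$
-\frac{1}{\delta}\int_{\tau_1-\delta}^{\tau_1}\!\!\int_\Omega u\cdot\mollifytime{\varphi}{\bar h}\,\dx\dt
= \iint_{\Omega\times(\tau_1-\delta,T)}\!\!\eta_\delta\bigl[u\cdot\partial_t\mollifytime{\varphi}{\bar h}-\mathbf A\cdot D\mollifytime{\varphi}{\bar h}\bigr]\dx\dt .
$$
Passing $\delta\downarrow 0$, the left-hand side converges to $-\int_\Omega u(\tau_1)\cdot\mollifytime{\varphi}{\bar h}(x,\tau_1)\,\dx$, either by Lebesgue differentiation for a.e.\ $\tau_1\in(0,T)$, or at every $\tau_1$ after passing to the continuous representative $u\in C^0((0,T);L^{1+m}_{\loc}(\Omega,\R^N))$ provided by Lemma~\ref{lem:continuity_local}; the right-hand side converges by dominated convergence to the corresponding integral over $\Omega\times(\tau_1,T)$.

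It remains to bring the mollification onto $u$ and $\mathbf A$. Applying the Fubini identity with $f=\mathbf A$ converts the divergence term to $\iint_{\Omega\times(\tau_1,T)}\mollifytime{\mathbf A}{h}\cdot D\varphi\,\dx\dt$, while combining $\partial_t\mollifytime{\varphi}{\bar h}=\tfrac{1}{h}(\mollifytime{\varphi}{\bar h}-\varphi)$ from Lemma~\ref{lem:mollifier}(v) with the analogous $\partial_t\mollifytime{u}{h}=\tfrac{1}{h}(u-\mollifytime{u}{h})$ and the same Fubini identity rewrites the parabolic term as $\iint_{\Omega\times(\tau_1,T)}\partial_t\mollifytime{u}{h}\cdot\varphi\,\dx\dt$. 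Unfolding the definition of $\mollifytime{\varphi}{\bar h}(x,\tau_1)$ on the remaining boundary contribution then produces exactly~\eqref{eq:mollifiedEquation_loc}. The main delicate point is the passage to the limit $\delta\downarrow 0$ in the boundary term, which is why the time-continuity statement of Lemma~\ref{lem:continuity_local} is invoked for the ``every $\tau_1$'' version, and why the extra integrability hypothesis on $\varphi$ in the regime $m<\tfrac{n-1}{n+1}$ is essential.
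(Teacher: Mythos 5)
Your proposal is correct and follows essentially the same route as the paper: test~\eqref{weak-solution} with the reverse time mollification $\mollifytime{\varphi}{\bar h}$ multiplied by a piecewise linear temporal ramp, isolate the $\partial_t\eta$ contribution as the boundary term, pass the ramp to a characteristic function (using Lebesgue differentiation, or the continuous representative from Lemma~\ref{lem:continuity_local}), and then apply the Fubini identity together with Lemma~\ref{lem:mollifier}(v) to transfer the mollification onto $u$ and $\mathbf A$. The only cosmetic difference is that you ramp from $0$ to $1$ over $(\tau_1-\delta,\tau_1)$, whereas the paper ramps over $(\tau_1,\tau_1+\varepsilon)$; both produce the same boundary term $-\int_\Omega u(\tau_1)\cdot\mollifytime{\varphi}{\bar h}(\cdot,\tau_1)\,\dx$ and the same residual integral over $\Omega\times(\tau_1,T)$ in the limit.
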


\begin{proof} For $\varepsilon, \tau >0$ define $\eta_\varepsilon \in W^{1,\infty}([0,T],[0,1])$ with $\eta_\varepsilon(t) := \frac{t-\tau_1}{\varepsilon}$ on $[\tau_1,\tau_1 + \varepsilon]$, $\eta_\varepsilon(t) := 1$ on $(\tau_1 + \varepsilon,T]$ and $\eta_\varepsilon (t) := 0$ on $[0,\tau_1]$. We insert $\mollifytime \varphi {\bar h} \eta_\varepsilon$ as test function into \eqref{weak-solution}. Note that $\mollifytime \varphi {\bar h}(\cdot,t) = 0$ if $t$ close to $T$ since $\varphi$ is compactly supported, while the cutoff function $\eta_\varepsilon$ takes care of the initial values. It follows that
\begin{align*}
0
	&=
	\iint_{\Omega_T}\big[
	- u\cdot\mollifytime \varphi{\bar h} \partial_t\eta_\varepsilon
	- u\cdot\partial_t\mollifytime \varphi{\bar h} \eta_\varepsilon
	+ \mathbf \eta_\varepsilon \A(x,t,u,D\u^m)\cdot D\mollifytime \varphi{\bar h}\big]\dx\dt.
\end{align*}
We inspect the first integral and obtain
\begin{align*}
&-\iint_{\Omega_T}
	 u\cdot\mollifytime \varphi{\bar h} \partial_t\eta_\varepsilon \dx\dt
	= -\frac 1 \varepsilon \iint_{\Omega \times [\tau_1,\tau_1+ \varepsilon]} u\cdot\mollifytime \varphi{\bar h}  \dx \dt \\
	\stackrel{\varepsilon \downarrow 0} \longrightarrow &- \int_\Omega u(\tau_1)\cdot\mollifytime \varphi{\bar h}(t_1)  \dx
	= -\frac 1 h \int_\Omega u(x,\tau_1) \cdot \int_{\tau_1}^T e^{-\frac s h} \varphi(x,s) \ds  \dx.
\end{align*}
After passing to the limit $\varepsilon \to 0$, we use Fubini's theorem and calculate
%{\color{gray}
%\begin{align*}
%\int_{\Omega_T} u \cdot \mollifytime \varphi{\bar h}  \eta_\varepsilon \dx \dt
%	&= \frac 1 h\int_{\Omega_T} u  \cdot \int_t^T e^{\frac{t-s}{h}} \varphi(\cdot,s)  \eta_\varepsilon \ds \dz \\
%	&= \frac 1 h \int_\Omega \int_0^T \int_t^T u(x,t) \cdot e^{\frac{t-s}{h}} \varphi(x,s)  \eta_\varepsilon \ds \dt \dx \\
%	&= \frac 1 h \int_\Omega \int_0^T \int_0^s u(x,t) \cdot e^{\frac{t-s}{h}} \varphi(x,s)  \eta_\varepsilon \dt \ds \dx \\
%	&= \int_\Omega \int_0^T \varphi(x,s) \cdot \frac 1 h\int_0^s e^{\frac{t-s}{h}} u(x,t)  \eta_\varepsilon \dt \ds \dx \\
%	&=\int_\Omega \int_0^T \varphi(x,t) \cdot \frac 1 h\int_0^t e^{\frac{s-t}{h}} u(x,s)  \eta_\varepsilon \ds \dt \dx \\
%	&= \int_{\Omega_T} \mollifytime u h \cdot \varphi  \eta_\varepsilon \dx \dt \\
%	&\stackrel{\varepsilon \downarrow 0}\longrightarrow \int_{\Omega_T} \mollifytime u h \cdot \varphi \dx \dt,
%\end{align*}
%}
\begin{align*}
\iint_{\Omega_T} u \cdot \mollifytime \varphi{\bar h}  \eta \dx \dt
	&= \frac 1 h \int_\Omega \int_{\tau_1}^T \int_t^T u(x,t) \cdot e^{\frac{t-s}{h}} \varphi(x,s)  \ds \dt \dx \\
	&= \frac 1 h \int_\Omega \int_{\tau_1}^T \int_{\tau_1}^s u(x,t) \cdot e^{\frac{t-s}{h}} \varphi(x,s)  \dt \ds \dx \\
	&= \int_\Omega \int_{\tau_1}^T  \varphi(x,t) \cdot \frac 1 h\int_{\tau_1}^t e^{\frac{s-t}{h}} u(x,s)   \ds  \dt \dx \\
	&= \iint_{\Omega_T} \eta \mollifytime u h \cdot \varphi \dx \dt,
\end{align*}
where we renamed $s,t$ by $t,s$ in the second last equality. Here $\eta (t) := \lim_{\varepsilon \to 0} \eta_\varepsilon (t) =  \chi_{(t_1,T]} (t)$. The same can be done to transfer the reverse time mollification from $D\varphi$ onto the vector field $\A$ by the property $D( \mollifytime \varphi {\bar h}) = \mollifytime {D\varphi}{\bar h}$, which transforms it again into the forward time mollification. This deals with the divergence part of the equation. \\
For the remaining parabolic part we use the previous equality and the properties
\begin{align*}
\partial_t \mollifytime \varphi h = \tfrac 1 h ( \varphi - \mollifytime \varphi h),
\quad
\partial_t \mollifytime \varphi {\bar h} = -\tfrac 1 h ( \varphi - \mollifytime \varphi {\bar h})
\end{align*}
to obtain the following:
\begin{align*}
-\iint_{\Omega_T} &u \cdot \partial_t \mollifytime \varphi{\bar h}  \eta_\varepsilon \dx \dt \\
	&= \frac 1 h \iint_{\Omega_T} u \cdot (\varphi - \mollifytime \varphi{\bar h})  \eta \dx \dt
	= \frac 1 h \iint_{\Omega_T} (u- \mollifytime u h) \cdot \varphi  \eta  \dx \dt \\
	&= \iint_{\Omega_T} \partial_t \mollifytime u h \cdot \varphi  \eta \dx \dt.
	%\stackrel{\varepsilon \downarrow 0}\longrightarrow \iint_{\Omega_T} \partial_t \mollifytime u h \cdot \varphi \dx \dt.
\end{align*}
By collecting these results we obtain the equation~\eqref{eq:mollifiedEquation_loc}. The right hand side of the equation converges to zero by dominated convergence theorem.
\end{proof}
In the same manner in the global case we have the following formulation.

\begin{lemma}
For a global weak solution $u$ according to Definition~\ref{def:global_weak_solution} in class $C^0([0,T];L^{m+1}(\Omega,\R^N))$, one has
\begin{align}\label{eq:mollifiedEquation}
\iint_{\Omega_T} \big(
	\partial_t \mollifytime u h \cdot \varphi 
	+ \mollifytime{A(x,t,u,D\u^m)}h \cdot D \varphi
\big) \d x \d t 
	= \frac{1}{h} \int_\Omega u(0) \cdot \int_{0}^T e^{-\frac s h} \varphi \mathrm{d}s\dx
\end{align}
%such that $\varphi$ vanishes outside any relatively compact subdomain $K\times I$ of $\Omega_T$.
for all $\varphi \in L^2(0,T;W_0^{1,2}(\Omega,\mathbb{R}^N))$. In the case $m < \frac{n-1}{n+1}$, we suppose in addition that $\varphi \in L^\frac{1+m}{m}(\Omega_T, \mathbb R^N)$.%The right hand side converges to zero as $h \downarrow 0$.
\end{lemma}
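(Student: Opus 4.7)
The plan is to mirror the derivation of the local mollified formulation~\eqref{eq:mollifiedEquation_loc}, adapting it to the global setting. Three features change: the time mollification may now start at $t = 0$ since $\u^m \in L^2(0,T; W^{1,2}(\Omega))$ is globally integrable; the test function $\varphi \in L^2(0,T; W_0^{1,2}(\Omega))$ is no longer required to be compactly supported in time; and the continuity $u \in C^0([0,T]; L^{m+1}(\Omega))$ established in the preceding lemma supplies a well-defined trace $u(0)$, which will produce the right-hand side of~\eqref{eq:mollifiedEquation}.

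The first step is to fix a one-sided cutoff $\eta_\varepsilon \in W^{1,\infty}([0,T]; [0,1])$ with $\eta_\varepsilon(t) = t/\varepsilon$ on $[0,\varepsilon]$ and $\eta_\varepsilon \equiv 1$ on $[\varepsilon, T]$, and to test~\eqref{weak-solution} against $\psi := \mollifytime{\varphi}{\bar h}\, \eta_\varepsilon$. Since $\mollifytime{\varphi}{\bar h}$ lies in $L^2(0,T; W_0^{1,2}(\Omega))$ with $\mollifytime{\varphi}{\bar h}(T) = 0$ by construction, $\psi$ vanishes at both endpoints and belongs to $W^{1,2}(0,T; L^2(\Omega)) \cap L^2(0,T; W_0^{1,2}(\Omega))$. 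In the singular regime $m < (n-1)/(n+1)$, the extra hypothesis $\varphi \in L^{(m+1)/m}(\Omega_T)$ combined with $\partial_t \mollifytime{\varphi}{\bar h} = -\tfrac 1 h(\varphi - \mollifytime{\varphi}{\bar h})$ gives $\partial_t \psi \in L^{(m+1)/m}(\Omega_T)$, making $\psi$ admissible in Definition~\ref{def:global_weak_solution}.

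Splitting $\partial_t \psi = \partial_t \mollifytime{\varphi}{\bar h}\, \eta_\varepsilon + \mollifytime{\varphi}{\bar h}\, \partial_t \eta_\varepsilon$ produces three groups of terms, each analysed exactly as in the local proof. The cutoff-derivative piece $-\iint_{\Omega_T} u \cdot \mollifytime{\varphi}{\bar h}\, \partial_t \eta_\varepsilon\, \dx \dt = -\tfrac 1 \varepsilon \int_0^\varepsilon \int_\Omega u \cdot \mollifytime{\varphi}{\bar h}\, \dx \dt$ converges, as $\varepsilon \searrow 0$, to $-\int_\Omega u(0) \cdot \mollifytime{\varphi}{\bar h}(0)\, \dx = -\tfrac 1 h \int_\Omega u(0) \cdot \int_0^T e^{-s/h}\varphi\, \ds\, \dx$, using the continuity of $u$ at $t = 0$; this is precisely the source of the right-hand side of~\eqref{eq:mollifiedEquation}. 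The remaining parabolic contribution, after using $\partial_t \mollifytime{\varphi}{\bar h} = -\tfrac 1 h(\varphi - \mollifytime{\varphi}{\bar h})$ together with the Fubini swap that transfers the reverse mollification from $\varphi$ onto $u$, yields $\iint_{\Omega_T} \partial_t \mollifytime{u}{h} \cdot \varphi\, \dx \dt$ in the limit. Finally, the divergence piece is converted via $D\mollifytime{\varphi}{\bar h} = \mollifytime{D\varphi}{\bar h}$ and the analogous Fubini trick into $\iint_{\Omega_T} \mollifytime{\A(x,t,u,D\u^m)}{h} \cdot D\varphi\, \dx \dt$. Collecting these three pieces gives~\eqref{eq:mollifiedEquation}.

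No genuinely new obstacle arises compared with the local case; the cutoff $\eta_\varepsilon$ at $t = 0$ plays the role previously played by the corresponding cutoff near $\tau_1$, and the only subtlety worth flagging is the verification of admissibility of $\psi$ in the singular regime, which is precisely why the lemma imposes the additional integrability assumption on $\varphi$.
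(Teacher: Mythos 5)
Your proof is correct and follows precisely the route the paper intends: the paper states this global formulation without a dedicated proof, saying only that it is obtained ``in the same manner'' as the local mollified formulation~\eqref{eq:mollifiedEquation_loc}, and your argument is exactly that adaptation (cutoff $\eta_\varepsilon$ now anchored at $t=0$ in place of $\tau_1$, the Fubini swap transferring $\mollifytime{\cdot}{\bar h}$ from $\varphi$ to $\mollifytime{\cdot}{h}$ on $u$ and on $\A$, and the endpoint trace $u(0)$ from the $\partial_t\eta_\varepsilon$-term furnished by the continuity $u\in C^0([0,T];L^{m+1}(\Omega,\R^N))$). One small point you leave implicit: your admissibility discussion covers $m<\frac{n-1}{n+1}$, but for $\frac{n-1}{n+1}\le m<1$ (where the lemma does not impose $\varphi\in L^{(1+m)/m}$) admissibility of $\psi=\mollifytime{\varphi}{\bar h}\eta_\varepsilon$ rests on the Gagliardo--Nirenberg bound giving $u\in L^2(\Omega_T)$, so that $u\cdot\partial_t\psi$ is already integrable with $\partial_t\psi\in L^2$; this is precisely the content of the remark following the lemma and is worth stating explicitly to justify the case split at $\frac{n-1}{n+1}$ rather than at $1$.
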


\begin{remark}
Regarding the finiteness of the integrals in \eqref{eq:mollifiedEquation}, the degenerate case is clear, since $u \in L^{2m}(\Omega_T,\mathbb R^N)$ and $\A$ fulfils \eqref{growth}. The singular case $m<1$ requires further inspection.

We can use the Gagliardo-Nirenberg inequality for $v=\u^m$ in Lemma \ref{lem:parabolic-sobolev} to see that $u \in L^{\gamma}(\Omega_T,\mathbb R^N)$, where $\gamma = {2(nm+m+1)}/{n}$. If $m \geq \frac{n-1}{n+1}$, then $\gamma \geq 2$, which is sufficient for the finiteness of the first integral in \eqref{eq:mollifiedEquation}. In the remaining case we have $\partial_t \mollifytime u h \in L^{1+m}(\Omega_T,\mathbb R^N)$ by applying the previous Lemma \ref{lem:mollifier} and the fact $u \in L^{1+m}(\Omega_T,\mathbb R^N)$. Combining this with the additional assumption $\varphi \in L^{{(1+m)}/m}(\Omega_T,\mathbb R^N)$ shows the finiteness in this case.
\end{remark}

\section{Stability in the local setting}

\subsection{Energy estimate}
\hfill \\[1ex]
In this section we prove an energy estimate by using the mollified formulation~\eqref{eq:mollifiedEquation}. This is an essential tool to conclude boundedness and weak convergence for the gradients of a sequence of weak solutions. For the proof, we proceed similar to \cite{Boegelein-Lukkari-Scheven}.

\begin{lemma} \label{lem:caccioppoli}
Let $u$ be a weak solution to Equation~\eqref{por-med-eq} in the sense of Definition~\ref{def:weak_solution}, where the vector field $\mathbf A$ satisfies growth conditions~\eqref{growth}. Let further be $0<\delta<t_1<t_2<T$ and $\eta \in C_0^\infty(\Omega)$. Then we have the estimate
\begin{align*}
	& \sup_{t \in (t_1,t_2)} 
 \int_{\Omega \times \lbrace t \rbrace} 
	\eta^2 |u|^{m+1} \dx +
	\iint_{\Omega \times (t_1,t_2) } \eta^2 \big|D\u^m\big|^2 \dx\dt \\
	&\quad\leq 
	\frac{c_m}{\delta}\, \iint_{\Omega\times (t_1-\delta,t_1) } \eta^2 |u|^{m+1} \dx\dt +
	c_m\, \iint_{\Omega\times (t_1-\delta,t_2) }   |D\eta|^2 |\u^m|^{2} \dx\dt
\end{align*}
where $c_m=c(m,\nu,L)>0$ with
\begin{align*}
\sup_{m \in (m_c,M)} c_m < \infty
\end{align*}
for any given $M>m_c$.
\end{lemma}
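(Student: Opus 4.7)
The proof follows the classical porous-medium Caccioppoli strategy. Fix $\tau_1:=t_1-\delta$ and let $\tau\in(t_1,t_2)$, $\varepsilon\in(0,t_2-\tau)$. The plan is to insert into the mollified formulation \eqref{eq:mollifiedEquation_loc} the test function
\[
\varphi(x,t)=\eta^2(x)\,\mollifytime{\u^m}{h}(x,t)\,\zeta_{\tau,\varepsilon}(t),
\]
where $\zeta_{\tau,\varepsilon}$ is the Lipschitz temporal cutoff that rises linearly from $0$ to $1$ on $[t_1-\delta,t_1]$, equals $1$ on $[t_1,\tau]$, and drops linearly to $0$ on $[\tau,\tau+\varepsilon]$. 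Lemma \ref{lem:mollifier} guarantees that $\varphi$ has the required integrability (including $\partial_t\varphi\in L^{(1+m)/m}$ in the singular regime). The right-hand side of \eqref{eq:mollifiedEquation_loc} vanishes in the limit $h\downarrow 0$ because $\mollifytime{u}{h}(\cdot,\tau_1)=0$ by construction of the local-case mollifier.

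For the divergence term, distributing $D$ and passing $h\downarrow 0$ via Lemma \ref{lem:mollifier}(ii) yields $\iint\zeta\,\A(x,t,u,D\u^m)\cdot[\eta^2 D\u^m+2\eta D\eta\,\u^m]\,\dx\,\dt$. The lower bound in \eqref{growth} applied to the first integrand produces $\nu\,\eta^2|D\u^m|^2$, and Young's inequality on the cross term (with the upper bound from \eqref{growth}) absorbs it into $\tfrac{\nu}{2}\eta^2|D\u^m|^2$ plus a constant multiple of $|D\eta|^2|\u^m|^2$. This yields the $|D\eta|^2|\u^m|^2$ piece on the right-hand side with a constant $C(\nu,L)$ independent of $m$.

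The parabolic term is the technical heart of the argument. The goal is to show
\[
\iint\eta^2\,\partial_t\mollifytime{u}{h}\cdot\mollifytime{\u^m}{h}\,\zeta\,\dx\,\dt\;\xrightarrow[h\downarrow 0]{}\;-\tfrac{1}{m+1}\iint\eta^2\,|u|^{m+1}\,\partial_t\zeta\,\dx\,\dt.
\]
Since $\mollifytime{\u^m}{h}\neq(\mollifytime{u}{h})^m$ in general, the exact chain-rule identity $\partial_t\mollifytime{u}{h}\cdot(\mollifytime{u}{h})^m=\tfrac{1}{m+1}\partial_t|\mollifytime{u}{h}|^{m+1}$ does not apply directly. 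The plan is to write $\mollifytime{\u^m}{h}=(\mollifytime{u}{h})^m+r_h$, perform the chain rule for the leading term, integrate by parts in $t$, and then show that the remainder $\iint\eta^2\zeta\,\partial_t\mollifytime{u}{h}\cdot r_h$ vanishes as $h\downarrow 0$. Because the sign of $r_h$ (controlled by Jensen applied to the convex/concave vector map $b\mapsto\boldsymbol{b}^m$) differs between the degenerate regime $m>1$ and the singular regime $m<1$, the two cases must be treated separately, combining Lemma \ref{lem:mollifier}(i) with the continuity in $L^{m+1}_{\loc}$ from Lemma \ref{lem:continuity_local}. Plugging in the explicit $\partial_t\zeta_{\tau,\varepsilon}$ then extracts two slices: $\tfrac{1}{(m+1)\delta}\iint_{\Omega\times(t_1-\delta,t_1)}\eta^2|u|^{m+1}$ (the initial bulk term on the right) and $\tfrac{1}{(m+1)\varepsilon}\int_\tau^{\tau+\varepsilon}\!\!\int_\Omega\eta^2|u|^{m+1}$ (the kinetic slice that will become the sup in $\tau$).

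Finally, sending $\varepsilon\downarrow 0$ and using the continuous representative $u\in C^0((0,T);L^{m+1}_{\loc}(\Omega,\R^N))$ from Lemma \ref{lem:continuity_local} recovers $\int_\Omega\eta^2|u(\tau)|^{m+1}\,\dx$ for every $\tau\in(t_1,t_2)$; taking the supremum over $\tau$ and collecting the three contributions gives the stated Caccioppoli inequality. The constant $c_m$ enters only through the factor $\tfrac{1}{m+1}$ from the parabolic identity and through Young's constant $C(\nu,L)/\nu$ in the divergence estimate; both depend continuously on $m\in(m_c,\infty)$, whence $\sup_{m\in(m_c,M)}c_m<\infty$ for any $M>m_c$. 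The main obstacle is the rigorous treatment of the parabolic term, specifically showing that the discrepancy $\mollifytime{\u^m}{h}-(\mollifytime{u}{h})^m$ produces only negligible error after being paired with $\partial_t\mollifytime{u}{h}$, uniformly across both the singular and degenerate regimes, in the spirit of the arguments in \cite{Boegelein-Lukkari-Scheven}.
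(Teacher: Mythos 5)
Your scaffolding --- the mollified formulation~\eqref{eq:mollifiedEquation_loc}, the two-edged temporal cutoff with a rising ramp on $[t_1-\delta,t_1]$ and a falling one near a sliding time $\tau$, and the Young-inequality treatment of the divergence term producing the $|D\eta|^2|\u^m|^2$ contribution --- is sound and essentially matches the paper. The gap is in the parabolic term, and it comes from your choice of test function $\varphi=\eta^2\,\mollifytime{\u^m}{h}\,\zeta$.

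With that test function the parabolic contribution is $\iint \eta^2\zeta\,\partial_t\mollifytime{u}{h}\cdot\mollifytime{\u^m}{h}$, and after applying the chain rule to the ``main'' part you are left with the remainder
\[
R_h:=\iint \eta^2\zeta\,\partial_t\mollifytime{u}{h}\cdot\big(\mollifytime{\u^m}{h}-\mollifytime{\u}{h}^m\big)\,\dx\dt
   =\frac{1}{h}\iint \eta^2\zeta\,(u-\mollifytime{u}{h})\cdot\big(\mollifytime{\u^m}{h}-\mollifytime{\u}{h}^m\big)\,\dx\dt,
\]
which you propose to send to zero via a Jensen-type sign on $r_h=\mollifytime{\u^m}{h}-\mollifytime{\u}{h}^m$. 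That does not close the gap: $\partial_t\mollifytime{u}{h}=\tfrac1h(u-\mollifytime{u}{h})$ has no sign, so a sign on $r_h$ alone says nothing about the sign or size of $R_h$; and the brute bound $|R_h|\le\tfrac1h\|u-\mollifytime{u}{h}\|\,\|r_h\|$ is not available, because the first factor blows up at exactly the rate the second decays and no control on $\partial_t u$ is at hand. Even in the scalar nonnegative case, where one can order $\mollifytime{u^m}{h}$ and $(\mollifytime{u}{h})^m$ by Jensen, the pairing against the signless factor $u-\mollifytime{u}{h}$ is not amenable to Lemma~\ref{lem:mollifier}(i) or $L^{m+1}_{\loc}$ continuity alone.

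The paper sidesteps this entirely by testing with $\varphi=\alpha\xi\,\eta^2\,\u^m$ --- the \emph{unmollified} power (which is admissible: $\u^m\in L^2_{\loc}W^{1,2}_{\loc}$ and, when $m<1$, also in $L^{(m+1)/m}_{\loc}$). Adding and subtracting $\partial_t\mollifytime{u}{h}\cdot\mollifytime{\u}{h}^m$, the analogue of your remainder becomes
\[
\frac{1}{h}\iint\alpha\xi\,\eta^2\,(u-\mollifytime{u}{h})\cdot\big(\u^m-\mollifytime{\u}{h}^m\big)\,\dx\dt\ \ge\ 0,
\]
which is nonnegative \emph{pointwise} by the elementary monotonicity $(a-b)\cdot(\a^m-\b^m)\ge0$ for all $a,b\in\R^N$ and $m>0$. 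There is then nothing to prove about its limit: it is simply dropped as a one-sided estimate, which is exactly what a Caccioppoli inequality requires. The chain rule $\partial_t\mollifytime{u}{h}\cdot\mollifytime{\u}{h}^m=\tfrac1{m+1}\partial_t|\mollifytime{u}{h}|^{m+1}$ then applies exactly, integration by parts in $t$ is performed, and the limit $h\downarrow0$ is straightforward because $u\in L^{m+1}_{\loc}$. Replace your test function by $\eta^2\u^m\zeta$ (keeping $\tau_1<t_1-\delta$ so the boundary term in~\eqref{eq:mollifiedEquation_loc} vanishes as $h\to0$) and the rest of your argument, including the $\varepsilon\downarrow0$ slice and the $\sup_\tau$, goes through as written.
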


%\begin{remark} \label{rem:caccioppoli_loc}
%The proof below is based on the mollified formulation~\eqref{eq:mollifiedEquation}. However, especially when using just the assumptions on $u$ given in Definition~\ref{def:weak_solution}, then one should rather begin with the original equation~\eqref{def:weak_solution} and use the reverse mollification $\mollifytime{\u^m}{\bar h}$ in the test function instead of $\u^m$. This will result in the same estimate, where the supremum term of $u$ on the left hand side is replaced with essential supremum.
%\end{remark}

\begin{proof}
For $\tau \in (t_1,t_2)$ and $0<\sigma<\tau$ we choose $\alpha(t)=\alpha_\sigma(t)$ and $\xi(t)$ as
\begin{align*}
\alpha(t) = \begin{cases}
1, & t \in [0,\tau-\sigma), \\
\tfrac {\tau-t} \sigma,  & t \in [\tau - \sigma, \tau), \\
0, & t \in [\tau,T]
\end{cases} \text{ and }
\xi(t) = \begin{cases}
0, & t \in [0,t_1-\delta), \\
\tfrac{t-(t_1-\delta)}{\delta}, & t \in [t_1-\delta,t_1), \\
1, & t \in [t_1,T].
\end{cases}
\end{align*}
We apply the test function $\varphi = \alpha \xi \eta^2 \u^m$ to the mollified equation \eqref{eq:mollifiedEquation}, with $\tau_1 \in (0,t_1-\delta)$. By adding and subtracting $ \alpha \xi \eta^2 \partial_t \mollifytime u h \cdot \mollifytime {\u} h^m$ and using the formula $\partial_t \mollifytime u h = \frac{u- \mollifytime u h}{h}$ from Lemma \ref{lem:mollifier}, we obtain
\begin{align*}
\iint_{\Omega_T} & \alpha \xi \eta^2 \partial_t \mollifytime u h \cdot  \u^m \dx \dt \\
	&= \iint_{\Omega_T} \alpha \xi \eta^2 \partial_t \mollifytime u h \cdot \mollifytime {\u} h^m \dx \dt
		+ \frac 1 h \iint_{\Omega_T} \alpha \xi \eta^2 (u- \mollifytime u h) \cdot (\u^m-\mollifytime \u h^m) \dx \dt \\
	&\geq  \iint_{\Omega_T} \alpha \xi \eta^2 \partial_t \mollifytime u h \cdot  \mollifytime {\u} h^m \dx \dt,
\end{align*}
as $(a-b) \cdot (\mathbf a^m-\mathbf b^m) \geq 0$ for all $a,b \in \mathbb{R}^N$ and $m >0$, which can be derived by using Young's inequality. One has $(\partial_t \mollifytime u h) \cdot \mollifytime \u h^m =  \frac 1{m+1} \partial_t|\mollifytime u h|^{m+1}$. This and partial integration with respect to the time variable $t$ extend the estimate to
\begin{align*}
\iint_{\Omega_T} & \alpha \xi \eta^2 \partial_t \mollifytime u h \cdot  \u^m \dx \dt \\
	&\geq \frac{1}{m+1} \iint_{\Omega_T} \alpha \xi \eta^2 \partial_t | \mollifytime u h |^{m+1} \dx \dt \\
	&= - c \iint_{\Omega_T} \eta^2 \partial_t( \alpha \xi ) |  \mollifytime u h |^{m+1}\dx \dt \\
	&= c \iint_{ \Omega \times (\tau-\sigma,\tau)}  \frac 1 \sigma \eta^2  \xi  |  \mollifytime u h |^{m+1} \dx \dt
		- c \iint_{ \Omega \times (t_1-\delta,t_1)} \frac{1}{\delta} \eta^2 \alpha  |  \mollifytime u h |^{m+1} \dx \dt
\end{align*}
with $c=c(m)$. Since $u \in L^{1+m}_\mathrm{loc}(\Omega_T)$, by letting $h \rightarrow 0$ this converges to
\begin{align*}
c \iint_{\Omega \times (\tau-\sigma,\tau)}  \frac 1 \sigma  \eta^2  \xi  |u|^{m+1} \dx \dt
		- c \iint_{\Omega \times (t_1-\delta,t_1)} \frac{1}{\delta} \eta^2 \alpha  |u|^{m+1} \dx \dt.
\end{align*}
Now consider the other term in the equation \eqref{eq:mollifiedEquation}. Since $\A(x,t,u,D\u^m) \in L^2(\Omega_T)$, it follows that
\begin{align*}
\lim_{h \rightarrow 0} &\iint_{\Omega_T} \alpha \xi \mollifytime {\A(x,t,u,D\u^m)} h \cdot D(\eta^2 \u^m) \dx \dt \\
	&=  \iint_{\Omega_T} \alpha \xi \A(x,t,u,D\u^m) \cdot D(\eta^2 \u^m) \dx \dt.
\end{align*}
Applying the growth conditions \eqref{growth} and Young's inequality, we get
\begin{align*}
-\iint_{\Omega_T} \alpha \xi &\A(x,t,u,D\u^m) \cdot D(\eta^2 \u^m) \dx \dt \\
	&\leq - \nu \iint_{\Omega_T} \alpha \xi \eta^2 |D\u^m|^2 \dx \dt 
		+ 2L \iint_{\Omega_T} \alpha \xi |D\u^m| |\eta| |D\eta| |\u^m| \dx \dt \\
	&\leq -\frac \nu 2 \iint_{\Omega_T} \alpha \xi \eta^2 |D\u^m|^2 \dx \dt 
		+ c \iint_{\Omega_T} \alpha \xi |D\eta|^2 |\u^m|^2 \dx \dt
\end{align*}
for $c=c(\nu,L)$. Since $\tau_1 < t_1- \delta$ in the mollifier, it follows that the right hand side of the equation~\eqref{eq:mollifiedEquation_loc} vanishes as $h\to 0$. Putting these estimates together, we obtain
\begin{align*}
\iint_{\Omega_T} \alpha \xi &\eta^2 |D\u^m|^2 \dx \dt
		+ \iint_{\Omega \times (\tau- \sigma, \tau)} \frac 1 \sigma \eta^2  \xi  |u|^{m+1} \dx \dt \\
	&\leq c \iint_{\Omega_T} \alpha \xi |D\eta|^2 |\u^m|^2 \dx \dt
		+c \iint_{\Omega \times (t_1-\delta,t_1)} \frac{1}{\delta} \eta^2 \alpha  |u|^{m+1} \dx \dt
\end{align*}
for a constant $c=c(m,\nu,L)$. Letting $\sigma \rightarrow 0$, this yields
\begin{align*}
\iint_{\Omega \times (t_1,\tau)} &\eta^2 |D\u^m|^2 \dx \dt
		+ \int_{\Omega \times \lbrace \tau \rbrace} \eta^2 |u|^{m+1} \dx \\
	&\leq \iint_{\Omega \times (t_1-\delta,\tau)} \xi \eta^2 |D\u^m|^2 \dx \dt
		+ \int_{\Omega \times \lbrace \tau \rbrace} \eta^2  \xi  |u|^{m+1} \dx  \\
	&\leq c \iint_{\Omega \times (t_1-\delta,\tau)} |D\eta|^2 |\u^m|^2 \dx \dt
		+c \iint_{\Omega \times (t_1-\delta,t_1)} \frac{1}{\delta} \eta^2  |u|^{m+1} \dx \dt,
\end{align*}
Choosing $\tau = t_2$ in the first term and taking the supremum over $(t_1,t_2)$ in the second term on the left hand side, we obtain the claimed inequality with $c_m=c(m,\nu,L)$. Tracing the initial appearance of $\frac 1{m+1}$ in the constants reveals that $\sup_{m \in (m_c,M)} c_m < \infty$.
\end{proof}

%\noindent
%For open sets $A,B \subset \mathbb R^{n+1}$ we write $A \Subset B$ if $A$ is relatively compact in $B$.
%, i.e. $\overline A \subset B$ with $\overline A$ compact.

\begin{corollary}\label{kor:unifbound}
In the setting of Theorem \ref{theorem} and for any $\tilde \Omega \times (t_1,t_2) \Subset \Omega_T$, we have
\begin{align*}
\sup_{i \in \mathbb{N}}
	\bigg(
		\sup_{t \in (t_1,t_2)} &\int_{\tilde\Omega \times \lbrace t \rbrace} |u_i|^{m_i+1} \dx +
		\iint_{\tilde \Omega \times (t_1,t_2)} |\u_i^{m_i}|^2 \dx\dt \\
		&+\iint_{\tilde \Omega \times (t_1,t_2)} \big|D\u_i^{m_i}\big|^2 \dx\dt
	\bigg)
	<\infty.
\end{align*}
\end{corollary}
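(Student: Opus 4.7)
The plan is to apply the Caccioppoli estimate of Lemma~\ref{lem:caccioppoli} separately to each $u_i$, with a common cutoff adapted to $\tilde\Omega \times (t_1,t_2)$, and then verify that the right-hand side remains uniformly bounded in $i$. Concretely, I would fix a slightly enlarged subcylinder $\Omega' \times (t_1-\delta,t_2) \Subset \Omega_T$ with $\tilde\Omega \Subset \Omega' \Subset \Omega$ and $\delta \in (0,t_1)$, together with $\eta \in C_0^\infty(\Omega')$ satisfying $\eta \equiv 1$ on $\tilde\Omega$. Since $m_i \to m \in (m_c,\infty)$, for any fixed $M > m$ one has $m_i \in (m_c,M)$ eventually, and the closing clause of Lemma~\ref{lem:caccioppoli} guarantees that the constants $c_{m_i}$ are controlled by a single $C = C(m, M, \nu, L)$.

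Feeding these choices into Lemma~\ref{lem:caccioppoli} then gives, uniformly in $i$,
\begin{align*}
\sup_{t \in (t_1,t_2)} \int_{\tilde\Omega} |u_i|^{m_i+1}\, \dx + \iint_{\tilde\Omega \times (t_1,t_2)} |D\u_i^{m_i}|^2\, \dx\dt
\le C \iint_{\Omega' \times (t_1-\delta,t_2)} \bigl( |u_i|^{m_i+1} + |\u_i^{m_i}|^2 \bigr)\, \dx\dt.
\end{align*}
The task then reduces to controlling both integrands on the right-hand side uniformly in $i$; the middle term $\iint_{\tilde\Omega \times (t_1,t_2)} |\u_i^{m_i}|^2\, \dx\dt$ of the claim itself is covered by the very same $L^2_{\loc}$-bound. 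That bound is immediate from assumption~\eqref{assumption:weak-convergence}, since any weakly convergent sequence in $L^2_{\loc}(\Omega_T,\R^N)$ is norm-bounded on each compactly contained subset.

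For the remaining $|u_i|^{m_i+1}$-term I would split by regime. In the degenerate case $m > 1$ we eventually have $m_i > 1$, so $(m_i+1)/m_i < 2$, and H\"older's inequality on the bounded cylinder yields $\iint |u_i|^{m_i+1} = \iint |\u_i^{m_i}|^{(m_i+1)/m_i} \le c\, \|\u_i^{m_i}\|_{L^2}^{(m_i+1)/m_i}$, which is uniformly bounded by the previous step. In the singular case $m \in (m_c,1)$ the hypothesis~\eqref{assumption:boundedness} is precisely what furnishes the required uniform $L^1_{\loc}$-bound. The borderline situation $m = 1$ with $m_i < 1$ can be handled either by passing to a subsequence along which $m_i \ge 1$, or by bootstrapping the $L^2$-information into higher integrability via the parabolic Sobolev inequality of Lemma~\ref{lem:parabolic-sobolev}. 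The main obstacle throughout is ensuring that the Caccioppoli constant does not blow up as $m_i$ approaches $m$, which is precisely what the uniform-in-$m$ clause in Lemma~\ref{lem:caccioppoli} provides; once that is in hand, everything else is routine bookkeeping across the two standing hypotheses~\eqref{assumption:weak-convergence} and~\eqref{assumption:boundedness}.
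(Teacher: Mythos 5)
Your proposal follows the same route as the paper: apply Lemma~\ref{lem:caccioppoli} with a common cutoff, invoke the clause $\sup_{m\in(m_c,M)}c_m<\infty$ to control the constants uniformly, and bound the right-hand side via the $L^2_{\loc}$-boundedness of $\u_i^{m_i}$ coming from \eqref{assumption:weak-convergence} together with the $L^1_{\loc}$-boundedness of $|u_i|^{m_i+1}$. Your case analysis for the latter is also what the paper has in mind (H\"older when $m_i\ge 1$, hypothesis \eqref{assumption:boundedness} when $m<1$), so the core argument is correct and essentially identical.

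One small remark on the borderline $m=1$ discussion: neither of your two suggested fixes quite closes the gap on its own. Passing to a subsequence with $m_i\ge 1$ is impossible if $m_i<1$ for every $i$, and bootstrapping via Lemma~\ref{lem:parabolic-sobolev} is circular, since that inequality requires exactly the sup-slice and gradient bounds that the corollary is trying to establish. The cleanest resolution is simply to read Theorem~\ref{theorem} as stated: it imposes \eqref{assumption:boundedness} as a standing hypothesis, and the preceding text ``if $m\in(m_c,1)$'' merely flags that this hypothesis is \emph{automatic} (by H\"older) when $m_i\ge 1$ but an \emph{additional} requirement otherwise. With that reading the borderline case is covered by hypothesis and no extra device is needed; your treatment of the other two regimes is already complete.
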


\begin{proof}
For $\tilde \Omega \times (t_1,t_2) \Subset \Omega_T$ choose $\eta \in C_0^\infty(\Omega)$ with $\eta = 1$ on $\tilde \Omega$, $\eta(x) = 0$ if $\dist(x, \partial\tilde \Omega) \geq \tfrac 1 2 \dist(\partial \Omega,\partial \tilde \Omega)$ and
$|D\eta| \leq \dist(\partial \Omega, \partial \tilde \Omega)^{-1}$. \\
Due to the weak convergence (\ref{assumption:weak-convergence}), the sequences $(\u_i^{m_i})$ and $(\u_i^{m_i+1})$ are bounded in $L^2(K \times I)$ and $L^1(K \times I)$ respectively, in each of the cases $m \geq 1, \; m<1$ and for any $K \times I \Subset \Omega_T$. This immediately gives the bound for the term $|\u_i^{m_i}|^2$. It further implies that the right hand side of the previous lemma is bounded. Hence the boundedness follows for the terms $|u_i|^{m_i+1}$ and $|D\u_i^{m_i}|^2$.
%The remaining term $|\u_i^{m_i}|^2$ forms a bounded sequence as we have the weak convergence $\u_i^{m_i} \rightharpoonup \u^m$ in $L_{\loc}^2(\Omega_T,\mathbb{R}^N)$ given by assumption \eqref{assumption:weak-convergence}.
%For the remaining term, we use a slice-wise Poincar\'e-inequality. \\
%Define $(\u_i^{m_i})_\Omega(t) = |\tilde \Omega|^{-1} \int_{\tilde \Omega} \u_i^{m_i}(x,t) \dx$. Then,
%\begin{align*}
%\iint_{\tilde \Omega \times (t_1,t_2)} \big|\u_i^{m_i}\big|^{2} \dx \dt
%	&\leq c \iint_{\tilde \Omega \times (t_1,t_2)} |\u_i^{m_i}-(\u_i^{m_i})_{\tilde\Omega}(t)|^{2} \dx \dt \\
%	&\quad+ \iint_{\tilde \Omega \times (t_1,t_2)} |(\u_i^{m_i})_{\tilde\Omega}(t)|^{2} \dx \dt \\
%	&\leq c \iint_{\tilde \Omega \times (t_1,t_2)} |D\u_i^{m_i}|^{2} \dx \dt
%	+ c \int_{t_1}^{t_2} \bigg| \int_{\tilde \Omega} \u_i^{m_i} \dx \bigg|^2 \dt.
%\end{align*}
%The second term can be further estimated by
%\begin{align*}
%\int_{t_1}^{t_2} \bigg| \int_{\tilde \Omega} \u_i^{m_i} \dx \bigg|^2  \dt
%	&\leq (t_2-t_1) \bigg( \sup_{t \in (t_1,t_2)} \int_{\tilde \Omega} |\u_i^{m_i}| \dx \bigg)^2 \\
%	&\leq c \bigg( \sup_{t\in (t_1,t_2)} \int_{\tilde \Omega} 1 + |u_i|^{m_i+1} \dx \bigg)^2 < \infty.
%\end{align*}
%This yields the bound for the $|\u_i^{m_i}|^2$ term.
\end{proof}

\begin{corollary}\label{kor:weak-convergence}
%Let $u_i$ be a weak solution to equation~\eqref{por-med-eq} to the exponent $m_i$ in the sense of Definition \ref{def:weak_solution} and let the vector field $\mathbf A$ satisfy the growth conditions~\eqref{growth}. \\
In the setting of Theorem \ref{theorem}, for the function $u: \Omega_T \rightarrow \mathbb{R}^N$ from \eqref{assumption:weak-convergence} we have $\u^m \in L_{\loc}^2 \big( 0,T; W_{\loc}^{1,2}(\Omega,\mathbb{R}^N ) \big)$. Further, there is a subsequence, still denoted by $(\u_i^{m_i})$, such that
\begin{align*}
\u_i^{m_i} &\rightharpoonup \u^m \quad \text{ weakly in } L^2_{\loc} \big( 0,T; W_{\loc}^{1,2}(\Omega,\mathbb{R}^N ) \big)
\end{align*}
as $i \longrightarrow \infty$.
\end{corollary}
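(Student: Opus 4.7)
The plan is to combine the uniform bound from Corollary \ref{kor:unifbound} with reflexivity and a diagonal extraction. Fix first an arbitrary subcylinder $\tilde\Omega \times (t_1,t_2) \Subset \Omega_T$. Corollary \ref{kor:unifbound} tells us that $(\u_i^{m_i})$ is bounded in $L^2(\tilde\Omega \times (t_1,t_2),\R^N)$ and that $(D\u_i^{m_i})$ is bounded in $L^2(\tilde\Omega \times (t_1,t_2),\R^{Nn})$. Hence $(\u_i^{m_i})$ is bounded in the reflexive Banach space $L^2(t_1,t_2;W^{1,2}(\tilde\Omega,\R^N))$.

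By reflexivity we may extract a subsequence converging weakly in $L^2(t_1,t_2;W^{1,2}(\tilde\Omega,\R^N))$ to some limit $w$. The continuous inclusion into $L^2(\tilde\Omega\times(t_1,t_2),\R^N)$ implies weak convergence in the latter space as well; comparing with the assumption \eqref{assumption:weak-convergence} and using uniqueness of weak $L^2$-limits identifies $w = \u^m$ almost everywhere on $\tilde\Omega\times(t_1,t_2)$. In particular $\u^m \in L^2(t_1,t_2;W^{1,2}(\tilde\Omega,\R^N))$. The identification $D\u_i^{m_i} \rightharpoonup D\u^m$ then follows either directly from weak convergence in $L^2(t_1,t_2;W^{1,2})$, or alternatively by passing to the limit in the distributional identity $\iint \u_i^{m_i}\cdot D\psi\,\dxt = -\iint D\u_i^{m_i}\cdot \psi\,\dxt$ for $\psi \in C_0^\infty(\tilde\Omega\times(t_1,t_2))$, using \eqref{assumption:weak-convergence} on the left and the weak limit of $(D\u_i^{m_i})$ on the right.

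The only remaining point is to produce a subsequence that works locally throughout $\Omega_T$, independent of the choice of subcylinder. I would fix an exhaustion $\tilde\Omega_k\times(t_1^k,t_2^k) \Subset \Omega_T$ with $\bigcup_k \tilde\Omega_k \times (t_1^k,t_2^k) = \Omega_T$, apply the argument above on $\tilde\Omega_1\times(t_1^1,t_2^1)$, then iteratively refine by selecting a subsequence of the previously extracted one on $\tilde\Omega_k\times(t_1^k,t_2^k)$, and finally take the diagonal subsequence. This diagonal subsequence converges weakly in $L^2(t_1^k,t_2^k;W^{1,2}(\tilde\Omega_k,\R^N))$ for every $k$, which is exactly the claimed weak convergence in $L^2_{\loc}(0,T;W^{1,2}_{\loc}(\Omega,\R^N))$.

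No step in this argument is genuinely hard; the only subtlety is the identification of the weak limit of the gradient sequence, and that is essentially automatic once \eqref{assumption:weak-convergence} is paired with the distributional definition of $D$.
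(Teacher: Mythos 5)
Your proposal is correct and follows essentially the same route as the paper: boundedness in $L^2_{\loc}(0,T;W^{1,2}_{\loc})$ from Corollary~\ref{kor:unifbound}, weak compactness by reflexivity, and identification of the limit with $\u^m$ via uniqueness of weak limits against the assumption~\eqref{assumption:weak-convergence}. The only difference is that you spell out the diagonal extraction over an exhaustion, which the paper leaves implicit in the phrase ``converges weakly in that space''; this is a welcome clarification but not a different argument.
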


\begin{proof}
By the uniform bound of Corollary \ref{kor:unifbound}, we have that $(\u_i^{m_i})$ is a bounded sequence in $L^2_{\loc} \big( 0,T; W_{\loc}^{1,2}(\Omega, \mathbb{R}^N) \big)$. Hence there is a subsequence, still denoted by $(\u_i^{m_i}	)$, that converges weakly in that space to some limit function $\v^m$. As further $\u_i^{m_i} \rightharpoonup \v^m$ and, by \eqref{assumption:weak-convergence},  $\u_i^{m_i} \rightharpoonup \u^m$ both weakly in $L_{\loc}^2(\Omega_T,\mathbb R^N)$, the uniqueness of weak limits implies $\u^m=\v^m \in L^2_{\loc} \big( 0,T; W_{\loc}^{1,2}(\Omega, \mathbb{R}^N) \big)$.
\end{proof}

\subsection{Strong convergences}
\hfill \\[1ex]
Up next we improve the weak convergence to a strong convergence. We first treat the sequence of weak solutions and then the gradients. To obtain these results we use the energy estimate from the last section and the compactness result in Lemma~\ref{lem:simon-compactness}. Further, we need the following Lemma which is due to B\"ogelein et al.~\cite[Lemma 4.4]{Boegelein-Dietrich-Vestberg}.

\begin{lemma}\label{lem:interpolation}
Let $\beta \in (0,\infty)$, $p,q, \mu \in [1,\infty)$, $\theta := \max \lbrace 1, \beta p \rbrace$, $T>0$ and $\Omega \subset \mathbb R^n$ be a bounded domain and $X,Y$ be Banach spaces such that $X \subset L^q(\Omega, \mathbb R^N)$ and $L^{\mu'} (\Omega, \mathbb R^N) \subset X' \subset Y$ with compact embeddings $T:X \hookrightarrow L^q(\Omega,\mathbb R^N)$ and $S:L^{\mu'}(\Omega,\mathbb R^N) \hookrightarrow X'$ that are compatible in the sense that
\begin{align*}
\int_\Omega Tv \cdot w \dx
= \langle v, Sw \rangle
\end{align*}
for any $v \in X$ such that $Tv \in L^\mu(\Omega, \mathbb R^N)$, for any $w \in L^{\mu'}(\Omega, \mathbb R^N)$ and where $\langle \cdot, \cdot \rangle$ is the dual pairing of $X$ and $X'$. Then, for any $\eta >0$ there exists $M_\eta >0$ such that
\begin{align*}
\norm{ \tau_h f^\beta &- f^\beta }_{L^p(0,T-h; L^q(\Omega, \R^N ))} \\
	&\leq \norm{f^\beta}_{L^p(0,T;X)}^{\frac \beta{\beta+1}}
		\bigg[
			\eta \Big[
				\norm{f^\beta}_{L^p(0,T;X)}^{\frac 1{\beta+1}}
				+ \norm{f}_{L^\theta(0,T;L^{\mu'}(\Omega,\R^N))}^{\frac \beta{\beta+1}}
			\Big]  + M_\eta
			\norm{\tau_h f - f }_{L^\theta(0,T-h;Y)}^{\frac{ \beta}{\beta+1}}
		\bigg]
\end{align*}
for any $f \in L^\theta(0,T;L^{\mu'}(\Omega,\mathbb R^N))$ with $f^\beta \in L^p(0,T;X) \cap L(0,T;L^{\mu}(\Omega,\R^N))$.
\end{lemma}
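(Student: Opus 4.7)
The plan is to combine a Gagliardo--Nirenberg/Ehrling interpolation associated with the abstract Gelfand-triple structure $X \subset L^q \subset X'$ (which is implied by the two compact embeddings $X \Subset L^q$ and $L^{\mu'} \Subset X'$ together with the compatibility identity $\int_\Omega Tv \cdot w \dx = \langle v, Sw\rangle$) with a pointwise nonlinear inequality of porous-medium type, namely $|a^\beta - b^\beta| \leq C(|a|+|b|)^{\beta-1}|a-b|$ for $\beta \geq 1$, and the reverse inequality from~\eqref{eq:pull_exponent_inside} for $\beta < 1$. The exponent $\beta/(\beta+1)$ that appears in the claim is the characteristic weight one obtains when splitting a Gelfand-triple bound $\|v\|_{L^q}^2 \le \|v\|_X\|v\|_{X'}$ by a Young-type redistribution tuned so that the $X$-factor carries weight $\beta/(\beta+1)$ and the dual factor carries $1/(\beta+1)$.

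Concretely I would proceed as follows. First, pointwise in time, set $v(t) := \tau_h f^\beta(t) - f^\beta(t)$ and apply the interpolation $\|v\|_{L^q} \leq c\|v\|_X^{\beta/(\beta+1)}\|v\|_{X'}^{1/(\beta+1)}$ obtained from the quadratic Gelfand-triple inequality via Young. Bound $\|v\|_X \leq 2\|f^\beta\|_X$ trivially, and use Ehrling for the compact embedding $L^{\mu'} \Subset X' \hookrightarrow Y$ to write $\|v\|_{X'} \leq \tilde\eta\|v\|_{L^{\mu'}} + \tilde M_{\tilde\eta}\|v\|_Y$. Second, handle each of the two remaining norms via the nonlinear algebraic inequality combined with Hölder in space: the Ehrling-small term $\tilde\eta\|v\|_{L^{\mu'}}$ is dominated by a power of $\|f\|_{L^{\mu'}}$ through $|f^\beta| \lesssim |f|\,|f|^{\beta-1}$ and triangle inequality, while the $Y$-term is estimated by the product $(\|f\|_{L^{\mu'}})^{\beta-1}\|\tau_h f - f\|_Y$ via a spatial Hölder pairing matched to the Lebesgue exponents of the embeddings. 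Third, raise to the $p$-th power, integrate over $(0,T-h)$, and apply Hölder in time with conjugate exponents $(\beta+1)$ and $(\beta+1)/\beta$ so that the first factor produces $\|f^\beta\|_{L^p(0,T;X)}^{\beta/(\beta+1)}$ and the second produces the bracketed expression, with the $Y$-translation norm entering to the power $\beta/(\beta+1)$. The threshold $\theta = \max\{1,\beta p\}$ is forced precisely by this last step: for $\beta p \geq 1$ one uses the genuine Hölder conjugate, while $\theta=1$ covers the degenerate regime $\beta p < 1$ via an $L^\infty$--$L^1$ pairing in time.

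The main obstacle I expect is the coupled exponent bookkeeping between the spatial interpolation and the time integration. Verifying that the combination of the nonlinear algebraic step with Hölder in space leaves $\|\tau_h f - f\|_Y$ to exactly the power $\beta/(\beta+1)$, while simultaneously producing $\|f\|_{L^\theta(L^{\mu'})}$ to the same power $\beta/(\beta+1)$ (and not some other $\beta$-dependent exponent), requires a delicate choice of Hölder exponents in the spatial estimate, tuned to the Gelfand-triple weights and to the compatibility pairing $\int Tv\cdot w\dx = \langle v,Sw\rangle$. A secondary subtlety is the singular regime $\beta\in(0,1)$, where the algebraic inequality $|a^\beta-b^\beta|\lesssim (|a|+|b|)^{\beta-1}|a-b|$ is no longer valid in the useful direction; one has to substitute the dual bound~\eqref{eq:pull_exponent_inside} in the form $|a-b|^\beta \le c\,|\mathbf{a}^\beta-\mathbf{b}^\beta|$ and reorganise the Hölder pairing accordingly so that the final structural estimate is preserved.
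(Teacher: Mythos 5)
The paper itself does not prove this lemma; it cites it from B\"ogelein--Dietrich--Vestberg \cite[Lemma~4.4]{Boegelein-Dietrich-Vestberg}. So I can only judge your proposal against what the correct argument must be.

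Your plan contains two genuine gaps. The first concerns your starting point. You posit a ``Gelfand-triple interpolation'' of the form $\|v\|_{L^q}\leq c\,\|v\|_X^{\beta/(\beta+1)}\|v\|_{X'}^{1/(\beta+1)}$ and claim it arises from the quadratic estimate $\|v\|_{L^q}^2\leq\|v\|_X\|v\|_{X'}$ ``via Young.'' But Young's inequality converts products into sums; it cannot turn the balanced weights $(1/2,1/2)$ of a genuine two-sided interpolation into the skewed weights $(\beta/(\beta+1),1/(\beta+1))$. More importantly, even if such a linear interpolation held, it would be the wrong tool: being purely a statement about the single function $v=\tau_h f^\beta-f^\beta$, it can only produce $\|\tau_h f^\beta-f^\beta\|_{X'}$ or $\|\tau_h f^\beta-f^\beta\|_Y$ on the right-hand side, never $\|\tau_h f-f\|_Y$. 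The translation in $Y$ without the $\beta$-power is what the conclusion requires, and no linear interpolation of $v$-norms can manufacture it.

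The second gap is your step 2, where you propose to repair this by estimating $\|v\|_Y$ by $(\|f\|_{L^{\mu'}})^{\beta-1}\|\tau_h f-f\|_Y$ ``via a spatial H\"older pairing.'' This is not available: $Y$ is an abstract Banach space (in the intended application a negative dual space $Y=X'=W^{-1,2}$), and there is no H\"older inequality in $Y$ that would let you factor $\|\tau_h f^\beta-f^\beta\|_Y$ into a product involving $\|\tau_h f-f\|_Y$. The pointwise algebraic inequality $|\mathbf a^\beta-\mathbf b^\beta|\lesssim(|a|+|b|)^{\beta-1}|a-b|$ is of no help to estimate norms in $Y$, because the $Y$-norm does not see pointwise multiplicative splittings.

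The mechanism that actually works, and that explains every exponent in the statement, reverses the order of operations: one applies the nonlinear algebraic inequality \emph{before} invoking any functional-analytic duality. Concretely, the inequality to use is
\begin{equation*}
\big|\mathbf a^\beta-\mathbf b^\beta\big|^{\frac{\beta+1}{\beta}}
\;\le\; c\,\big(\mathbf a^\beta-\mathbf b^\beta\big)\cdot(a-b)
\qquad\text{for all } a,b\in\R^N,\ \beta>0,
\end{equation*}
which is equivalent, up to constants, to the two inequalities~\eqref{eq:boundary_term_ineq}. With $v(t):=\tau_h f^\beta(t)-f^\beta(t)$ and $q=(\beta+1)/\beta$, integrating in $x$ gives
\begin{equation*}
\|v(t)\|_{L^q(\Omega)}^{\frac{\beta+1}{\beta}}
\;\le\; c\int_\Omega v(t)\cdot\big(\tau_h f(t)-f(t)\big)\,\dx
\;=\; c\,\big\langle \tilde v(t),\, S\big(\tau_h f(t)-f(t)\big)\big\rangle
\;\le\; c\,\|\tilde v(t)\|_X\,\big\|S\big(\tau_h f(t)-f(t)\big)\big\|_{X'},
\end{equation*}
where the compatibility condition $\int_\Omega Tv\cdot w\,\dx=\langle v,Sw\rangle$ is what makes the duality rewriting legitimate, and it is essential that the \emph{untransformed} difference $\tau_h f-f\in L^{\mu'}$ feeds into $S$. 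Now Ehrling's lemma applies to the compact embedding $S:L^{\mu'}\hookrightarrow X'\subset Y$ and yields $\|S w\|_{X'}\le\tilde\eta\,\|w\|_{L^{\mu'}}+M_{\tilde\eta}\|w\|_Y$ with $w=\tau_h f-f$, giving exactly $\|\tau_h f-f\|_Y$ in the small-compactness term. Raising to the power $\beta/(\beta+1)$ produces $\|v(t)\|_{L^q}\le c\,\|f^\beta(t)\|_X^{\beta/(\beta+1)}\big(\tilde\eta\,\|f(t)\|_{L^{\mu'}}+M_{\tilde\eta}\|\tau_h f(t)-f(t)\|_Y\big)^{\beta/(\beta+1)}$, after which the time integration with H\"older in the conjugate pair $(\beta+1,(\beta+1)/\beta)$ (and the degenerate pairing when $\beta p<1$, which is where $\theta=\max\{1,\beta p\}$ comes from) delivers the claimed estimate. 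Your third step is therefore essentially right, but the first two steps of your plan do not produce the quantity $\|\tau_h f-f\|_Y$ and cannot be salvaged by H\"older inside $Y$; the decisive idea is to apply the pointwise algebraic inequality first so that the untransformed translation $\tau_h f-f$ enters through the dual pairing.
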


% old formulation
%\begin{lemma}\label{lem:interpolation}
%Let $\beta \in (0,\infty)$, $p,q, \mu \in [1,\infty)$ with $\frac 1 q + \frac 1 \mu \leq 1$, $\theta := \max \lbrace 1, \beta p \rbrace$, $T>0$ and $\Omega \subset \mathbb R^n$ be a bounded domain and $X,Y$ be Banach spaces such that $X \subset L^q(\Omega, \mathbb R^N)$ and $L^\mu (\Omega, \mathbb R^N) \subset X' \subset Y$ with compact embeddings $X \hookrightarrow L^q(\Omega,\mathbb R^N)$ and $L^\mu(\Omega,\mathbb R^N) \hookrightarrow X'$. Then, for any $\eta >0$ there exists $M_\eta >0$ such that
%\begin{align*}
%\norm{ \tau_h f^\beta &- f^\beta }_{L^p(0,T-h; L^q(\Omega, \R^N ))} \\
%	&\leq \norm{f^\beta}_{L^p(0,T;X)}^{\frac \beta{\beta+1}}
%		\bigg[
%			\eta \Big[
%				\norm{f^\beta}_{L^p(0,T;X)}^{\frac 1{\beta+1}}
%				+ \norm{f}_{L^\theta(0,T;L^\mu(\Omega,\R^N))}^{\frac \beta{\beta+1}}
%			\Big]  + M_\eta
%			\norm{\tau_h f - f }_{L^\theta(0,T-h;Y)}^{\frac{ \beta}{\beta+1}}
%		\bigg]
%\end{align*}
%for any $f \in L^\theta(0,T;L^\mu(\Omega,\mathbb R^N))$ with $f^\beta \in L^p(0,T;X)$.
%\end{lemma}

\begin{lemma} \label{lem:strong-lp-convergence}
Let the assumptions of Theorem \ref{theorem} hold and $u: \Omega_T \rightarrow \mathbb{R}^N$ be the function from \eqref{assumption:weak-convergence}. Then there exists a subsequence, still denoted by $(\u_i^{m_i})$, such that
$$
\u_i^{m_i} \to \u^m \quad \text{strongly in } L_{\loc}^q(\Omega_T, \R^N),
$$
as $i \to \infty$ and for any $q<2$.
\end{lemma}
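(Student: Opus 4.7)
The strategy is to apply Simon's compactness lemma (Lemma~\ref{lem:simon-compactness}) to the sequence $(\u_i^{m_i})$ on each compactly contained subcylinder $\tilde\Omega \times (t_1, t_2) \Subset \Omega_T$. The Caccioppoli bound (Corollary~\ref{kor:unifbound}) provides uniform control of $(\u_i^{m_i})$ in $L^2(t_1, t_2; W^{1,2}(\tilde\Omega))$, which yields the spatial compactness input via Rellich--Kondrachov. The equi-continuity-in-time hypothesis is the substantive ingredient: the weak equation controls $\partial_t u_i$, not $\partial_t \u_i^{m_i}$, so the plan is to transfer this information via the interpolation Lemma~\ref{lem:interpolation} with $f_i = u_i$ and $\beta = m_i$. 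Once Simon's lemma supplies a strongly convergent subsequence --- whose limit must agree with $\u^m$ by uniqueness of weak limits via \eqref{assumption:weak-convergence} --- a further subsequence converges a.e., and the uniform $L^2$-bound from Corollary~\ref{kor:unifbound} together with Vitali's theorem upgrades this to strong $L^q$ convergence for every $q<2$. A diagonal exhaustion of $\Omega_T$ then yields the claim.

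Concretely, testing the weak equation~\eqref{weak-solution} against $\varphi \in C_0^\infty(\tilde\Omega\times(t_1,t_2);\R^N)$ and using the growth condition~\eqref{growth} together with the Caccioppoli bound gives
\[
\Bigl|\iint u_i\cdot\partial_t\varphi\,\dx\dt\Bigr|
 \leq L\,\norm{D\u_i^{m_i}}_{L^2}\,\norm{D\varphi}_{L^2}
 \leq C\,\norm{D\varphi}_{L^2},
\]
so $\partial_t u_i$ is uniformly bounded in $L^2(t_1,t_2;Y)$ for a suitable negative-order Sobolev space $Y$, from which $\norm{\tau_h u_i - u_i}_{L^\theta(t_1,t_2-h;Y)}\to 0$ uniformly in $i$ as $h\to 0$. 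Lemma~\ref{lem:interpolation}, applied with $X = W^{1,2}(\tilde\Omega)$, $\beta = m_i$, and exponents $\mu, p, q$ chosen so that all norms on the right-hand side stay uniformly bounded --- possible because $m_i$ lies in a bounded subinterval of $(m_c,\infty)$ and, in particular, $(m_i+1)/m_i$ is controlled below the Sobolev exponent $2^* = 2n/(n-2)$ precisely when $m > m_c$ --- then transfers this information to
\[
\norm{\tau_h \u_i^{m_i} - \u_i^{m_i}}_{L^p(t_1,t_2-h;L^q(\tilde\Omega))} \longrightarrow 0
\]
uniformly in $i$, after first choosing the free parameter $\eta$ in Lemma~\ref{lem:interpolation} small and then $h$ small.

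With both hypotheses of Simon's lemma in place, a subsequence of $(\u_i^{m_i})$ converges strongly in $L^p(t_1,t_2;L^q(\tilde\Omega))$, and the limit is $\u^m$ by weak uniqueness. Passing to an a.e.-convergent subsequence and combining with the uniform $L^2$-bound from Corollary~\ref{kor:unifbound} gives uniform integrability of $|\u_i^{m_i} - \u^m|^q$ for any $q<2$, so Vitali's theorem upgrades the convergence to $L^q(\tilde\Omega\times(t_1,t_2))$; exhausting $\Omega_T$ by such subcylinders and a diagonal selection completes the argument. The main obstacle is the bookkeeping in Lemma~\ref{lem:interpolation}: the exponents $\beta/(\beta+1)$ and all arising constants depend on the varying $\beta = m_i$, so one has to verify that every norm on the right-hand side is controlled uniformly. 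This reduces precisely to the condition $m > m_c$ --- the same threshold at which the uniform Caccioppoli constant from Lemma~\ref{lem:caccioppoli} and the required Sobolev embedding underpinning Lemma~\ref{lem:interpolation} simultaneously become available.
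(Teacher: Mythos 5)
Your proposal follows essentially the same route as the paper: derive time-equicontinuity of $u_i$ in a negative-order Sobolev dual by testing the equation, transfer it to $\u_i^{m_i}$ via the interpolation Lemma~\ref{lem:interpolation} with $\beta = m_i$ (with the uniform bounds from Corollary~\ref{kor:unifbound} and the restriction $m>m_c$ keeping the Sobolev embedding and exponents under control), invoke Simon's compactness Lemma~\ref{lem:simon-compactness} to get strong $L^1_{\loc}$ convergence, identify the limit with $\u^m$ by uniqueness of weak limits, and upgrade to $L^q_{\loc}$ for all $q<2$ via the uniform $L^2$-bound (your Vitali phrasing and the paper's convergence-in-measure plus H\"older split are equivalent). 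The only cosmetic differences are that the paper works with a spatial cutoff so that $X=W_0^{1,2}$, and obtains the time modulus by directly producing a H\"older-$\tfrac12$ estimate for $u_i(\tau)-u_i(\tau+h)$ in the dual space rather than phrasing it as an $L^2$-in-time bound on $\partial_t u_i$.
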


\begin{proof}
Let $0<t_1 < t_2 <T$, $K \Subset \Omega$. First we show that the sequence $(u_i)$ is uniformly equicontinuous in $C\big([t_1,t_2]; (W^{1,2}_0(K,\R^N))^{'}\big)$, where by $(W^{1,2}_0(K,\R^N))^{'}$ we denote the dual space of $W^{1,2}_0(K,\R^N)$. Let $\tau \in (t_1,t_2)$. For $h \in (t_1 , t_2 - \tau)$ and $\delta \in (0, \min \{ \tau, t_2 - \tau - h \})$ we define
%First we show that the sequence $(u_i)$ is uniformly equicontinuous in $C\big([t_1,t_2]; (W^{1,2}_0(K,\R^N))^{'}\big)$, in which $0<t_1 < t_2 <T$, $K \Subset \Omega$ and by $(W^{1,2}_0(K,\R^N))^{'}$ we denote the dual space of $W^{1,2}_0(K,\R^N)$. Let $\tau \in (t_1,t_2)$. For $h \in (t_1 , t_2 - \tau)$ and $\delta \in (0, \min \{ \tau, t_2 - \tau - h \})$ we define
\begin{equation*}
  \xi_\delta (t):=
  \begin{cases}
    0, &t< \tau - \delta,\\
    \frac{1}{\delta}(t - \tau + \delta), &t\in [\tau-\delta, \tau], \\
	1, &t\in (\tau,\tau + h), \\
	\frac{1}{\delta}(-t + h + \delta + \tau), &t\in [\tau+ h, \tau +h+\delta], \\
	0, &t > \tau + h + \delta.
  \end{cases}
\end{equation*}
Furthermore let $w \in W^{1,2}_0(K,\mathbb R^N)$. We want to test the mollified weak formulation \eqref{eq:mollifiedEquation_loc} with $\varphi = \xi_\delta w$ and small enough parameter $\tau_1>0$ in the mollification. Unless $m < (n-1)/(n+1)$, this function is clearly admissible. In the other case, we see that, if $n>2$, $w \in L^{2n/(n-2)}(\Omega)$ by the spatial Sobolev embedding and consequently, $w \in L^{2n/(n-2)}(\Omega_T)$. As $2n/(n-2) \geq (1+m_i)/m_i$ if and only if $m_i \geq (n-2)_+/(n+2)$, $\varphi$ is admissible also in this case. If $n=2$, we have $w \in L^r$ for all $1  \leq r < \infty$ and the same conclusion holds. \\
Hence, we are allowed to apply this test function and let the mollification parameter go to zero. This way, we obtain
\begin{align*}
\frac{1}{\delta} \int_{\tau-\delta}^{\tau} \int_\Omega u_i \cdot w\, \d x \d t
	- \frac{1}{\delta} &\int_{\tau+h}^{\tau +h+\delta} \int_\Omega u_i \cdot w\, \d x \d t \\
&=  \iint_{\Omega_T} \xi_\delta \mathbf A (x,t,u_i,D \u_i^{m_i}) \cdot D w\, \d x \d t.
\end{align*}
By passing to the limit $\delta \to 0$ it follows that
\begin{align*}
\int_\Omega [ u_i(\tau) - u_i(\tau+h) ] \cdot w\, \d x = \int_{\tau}^{\tau + h} \int_\Omega \mathbf A (x,t,u_i, D \u_i^{m_i}) \cdot D w\, \d x \d t.
\end{align*}
By the spatial Sobolev embedding and using the lower bound $m_i > m_c
%= \frac{(n-2)_+}{n+2}
$ one can see that $u_i(\cdot,\tau) \in L^\frac{2n}{n+2}(K,\mathbb R^N)$. Hence we can consider $u_i$ as an element in the space $(W^{1,2}_0(K,\mathbb R^N))^{'}$ for every fixed time. From the equation above we thus have
%%%%%%%%%%%
% old formulation
%%%%%%%%%%%
%Considering $u_i$ as an element in the space $(W^{1,2}_0(K))^{'}$ for every fixed time (which we can do since $u_i(\cdot,\tau) \in L^\frac{2n}{n+2}(K)$ by spatial Sobolev embedding and using the lower bound $m_c= \frac{(n-2)_+}{n+2}$), from the equation above we have 
%
\begin{align*}
| \langle  u_i(\tau) - u_i(\tau+h) , w \rangle  |
	&\le \int_{\tau}^{\tau + h} \int_\Omega \mathbf | \A (x,t,u_i, D \u_i^{m_i}) | | D w |\, \d x \d t \\ 
	&\le  L \int_{\tau}^{\tau + h} \int_\Omega \mathbf |D \u_i^{m_i} | | D w |\, \d x \d t \\
	&\le L \| w \|_{W^{1,2}_0(K,\mathbb R^N)} \int_{\tau}^{\tau + h}\left( \int_K \mathbf |D \u_i^{m_i} |^2 \, \d x \right)^\frac12 \d t \\
	&\le L \| w \|_{W^{1,2}_0(K,\mathbb R^N)} h^\frac12 \left( \int_{t_1}^{t_2} \int_K |D \u_i^{m_i}|^2\, \d x \d t \right)^\frac12,
\end{align*}
where we used H\"older's inequality twice. The last integral on the right hand side is uniformly bounded by Corollary~\ref{kor:unifbound}. This implies the uniform equicontinuity of $(u_i)$ in $C\big([t_1,t_2], (W^{1,2}_0(K,\R^N))^{'}\big)$. \\ 
%Thus the sequence $(u_i)$ is relatively compact in $L^1_{\loc}(\Omega_T,\R^N)$ by Lemma~\ref{lem:compactness}. This implies that there exists a subsequence converging pointwise a.e., which implies the the convergence $\u_i^{m_i} \to \u^m$ in $L^{2+\eps}(\Omega_T,\R^N)$ for some $\eps > 0$ by standard measure theoretic argument. This completes the proof. 
Now consider $\tilde K \subset K, \; I:= [t_1,t_2]$ and a cutoff function $\xi \in C_0^\infty(K)$ with $\xi =1$ in $\tilde K$ and $\xi \leq 1$. We recall the notation $(\tau_h f) (t) := f(t+h)$ for $h>0$. We apply H\"older's inequality and the interpolation Lemma \ref{lem:interpolation} with $p=2$, $\beta=m_i$, $X=W_0^{1,2}(\Omega,\mathbb R^N)$, $Y=X'$, $q= (1+m_i)/m_i$ and $\mu' = 1+ m_i$ to the function $f \equiv \xi^{m_i^{-1}} \u_i^{m_i}$. Note that the required compact embeddings exist since the exponents $m_i$ are always larger than the critical exponent $m_c$. The compatibility condition in Lemma \ref{lem:interpolation} is fulfilled by definition of the adjoint operator. This way, for arbitrary $\eta>0$ we obtain $M_\eta >0$ such that
\begin{align*}
\norm{ \tau_h & \u_i^{m_i} - \u_i^{m_i} }_{L^1( \tilde K \times I, \mathbb R^N)} \\
	&\leq c \norm{ \tau_h \u_i^{m_i} - \u_i^{m_i} }_{L^2(t_1,t_2-h;L^{q}( \tilde K, \mathbb R^N))} \\
	& \leq c \norm{ \xi (\tau_h \u_i^{m_i} - \u_i^{m_i}) }_{L^2(t_1,t_2-h;L^{q}( K, \mathbb R^N))} \\
	&\leq c \norm{ \xi \u_i^{m_i} }_{L^2(I,X)}^{m_i/(m_i+1)}
	\bigg[ \eta
		\Big[ \norm{ \xi \u_i^{m_i} }_{L^2(I,X)}^{1/(1+m_i)}
		+ \norm{ \xi^{m_i^{-1}} u_i }_{L^\theta(I,L^{1+m_i}(\Omega,\mathbb R^N))}
			^{m_i/(m_i+1)}
		\Big] \\
		&\quad+ M_\eta \norm{ \xi^{m_i^{-1}} (\tau_h u_i - u_i) }
			_{L^\theta (t_1,t_2-h; Y )}
			^{m_i/(m_i+1)}
	\bigg].
\end{align*}
Let $\varepsilon>0$. By estimating $\xi \leq 1$ and using the uniform bounds from Corollary \ref{kor:unifbound}, we can choose $\eta>0$ so small such that
\begin{align*}
\norm{ \tau_h \u_i^{m_i} - \u_i^{m_i} }_{L^1( \tilde K \times I, \mathbb R^N)} 
	\leq \varepsilon + M_\varepsilon
		\norm{ \tau_h u_i - u_i }_{L^\theta (t_0,t_1-h; Y )}^{m_i/(m_i+1)}.
\end{align*}
By using the uniform equicontinuity of  $(u_i)$ in $C\big([t_1,t_2], (W^{1,2}_0(K,\R^N))^{'}\big)$, it follows that
\begin{align*}
\lim_{h \downarrow 0} \norm{ \tau_h \u_i^{m_i} - \u_i^{m_i} }_{L^1( \tilde K \times I, \mathbb R^N)} 
	\leq \varepsilon
\end{align*}
uniformly in $i$. Since $\varepsilon>0$ was arbitrary, we have
\begin{align*}
\lim_{h \downarrow 0} \norm{ \tau_h \u_i^{m_i} - \u_i^{m_i} }_{L^1( \tilde K \times I, \mathbb R^N)} 
=0
\end{align*}
uniformly in $i$.

% We then have
%\begin{align*}
%\norm{ \u_i^{m_i}&(\tau+h)-\u_i^{m_i}(\tau) }_{L^1( \tilde K \times I)} \\
%	&\leq C \bigg(
%		\iint_{ \tilde K \times I} | \u_i^{m_i}(\tau+h)-\u_i^{m_i}(\tau)|^{\frac{m_i+1}{m_i}} \dx \dt
%		\bigg)^{\frac{m_i}{m_i+1}} \\
%	&\leq C \bigg(
%		\iint_{K \times I } \eta | \u_i^{m_i}(\tau+h)-\u_i^{m_i}(\tau)|^{\frac{m_i+1}{m_i}} \dx \dt
%		\bigg)^{\frac{m_i}{m_i+1}} \\
%	&{\color{red} \text{this is where we need to quote}} \\
%	&\leq C \norm{\u_i^{m_i}}_{L^2(t_1,t_2;W^{1,2}(K,\mathbb{R}^N))} \, \norm{u_i(\tau+h)-u_i(\tau)}_{L^2(t_1,t_2;W^{1,2}(K,\mathbb{R}^N)')} \\
%	&\leq C \norm{u_i(\tau+h)-u_i(\tau)}_{L^2(t_1,t_2;(W^{1,2}(K,\mathbb{R}^N))')}
%	\longrightarrow 0
%\end{align*}
%uniformly as $i \longrightarrow \infty$.

\noindent
This allows us to apply Lemma \ref{lem:simon-compactness} with $F=\lbrace \u_i^{m_i}: i \in \mathbb{N}\rbrace$, $p=1$, $X=W^{1,2}(\tilde K,\R^N)$ and $B=L^1(\tilde K,\R^N)$. This implies that there exists a subsequence, still denoted by $(\u_i^{m_i})$, such that $\u_i^{m_i} \longrightarrow \v^m$ strongly (and thus also weakly) in $L^1(\tilde K \times I,\R^N)$ for some limit function $\v^m$. The weak convergence to $\u^m$ in \eqref{assumption:weak-convergence} also holds in $L^1(\tilde K \times I,\R^N)$. By uniqueness of weak limits, it once again follows that $\v^m=\u^m \in L^2(\tilde K \times I,\R^N)$ (see Corollary \ref{kor:weak-convergence}). Clearly, this convergence is also fulfilled pointwise a.e. as $i \longrightarrow \infty$. \\
Since $\Omega$ is bounded, pointwise convergence a.e. implies convergence in measure. We recall that $\u_i^{m_i} \in L^2(\tilde K \times I,\R^N)$ and in the singular case $\u_i^{m_i} \in L^{\frac{m+1}{m}}(\tilde K \times I,\R^N)$ for large $i$ due to the weak convergence of the sequence and the Gagliardo-Nirenberg inequality in Lemma~\ref{lem:parabolic-sobolev}. In the case $m\ge 1$, for any $q<2$ and $\delta > 0$ we can write
%
%\begin{align*}
%\iint_{\tilde K \times I} | \u_i^{m_i} - \u^m |^q  \, \d x \d t = &\iint_{ \{\tilde K \times I : | \u_i^{m_i} - \u^m | < \delta \}} | \u_i^{m_i} - \u^m |^q  \, \d x \d t \\
%&+ \iint_{\{ \tilde K \times I: | \u_i^{m_i} - \u^m | \ge \delta \}} | \u_i^{m_i} - \u^m |^q  \, \d x \d t.
%\end{align*}
\begin{align*}
\iint_{\tilde K \times I} | \u_i^{m_i} - \u^m |^q  \, \d x \d t
	= &\iint_{ (\tilde K \times I) \cap \{ | \u_i^{m_i} - \u^m | < \delta \} } | \u_i^{m_i} - \u^m |^q  \, \d x \d t \\
	&+ \iint_{(\tilde K \times I) \cap \{ | \u_i^{m_i} - \u^m | \ge \delta \}} | \u_i^{m_i} - \u^m |^q  \, \d x \d t.
\end{align*}
The first integral can be made small by choosing $\delta >0$ small and for the latter integral we can use H\"older's inequality
%
%\begin{align*}
%&\iint_{\{ \tilde K \times I: | \u_i^{m_i} - \u^m | \ge \delta \}} | \u_i^{m_i} - \u^m |^q  \, \d x \d t \\
%&\hspace{15mm}\le |\{\tilde K \times I : | \u_i^{m_i} - \u^m | \ge \delta \} |^\frac{2-q}{2} \Bigg( \iint_{\tilde K \times I } |\u_i^{m_i} - \u^m|^2 \, \d x \d t \Bigg)^\frac{q}{2}.
%\end{align*}
\begin{align*}
&\iint_{(\tilde K \times I) \cap \{ | \u_i^{m_i} - \u^m | \ge \delta \}} | \u_i^{m_i} - \u^m |^q  \, \d x \d t \\
&\hspace{15mm}\le |(\tilde K \times I) \cap \{ | \u_i^{m_i} - \u^m | \ge \delta \} |^\frac{2-q}{2} \Bigg( \iint_{\tilde K \times I } |\u_i^{m_i} - \u^m|^2 \, \d x \d t \Bigg)^\frac{q}{2}.
\end{align*}
The measure is small by the convergence in measure, and the latter integral is bounded by Corollary~\ref{kor:unifbound}. Thus it follows that $\u_i^{m_i} \longrightarrow \u^m$ strongly in $L^q(\tilde K \times I,\R^N)$ for all $q < 2$. In the singular case we can conclude the convergence for all $q<\frac{m+1}{m}$ with a similar argument.
\end{proof}

By using the Gagliardo-Nirenberg inequality in Lemma~\ref{lem:parabolic-sobolev} together with the same measure theoretic argument, we obtain slightly better convergences in the previous lemma which are stated in the following corollary.
\begin{corollary} \label{remark:strong-lp-convergence}
There is an $\varepsilon>0$ such that the convergences in Lemma~\ref{lem:strong-lp-convergence} hold for $L^{2+\varepsilon}_{\loc}(\Omega_T, \mathbb R^N)$ if $m\ge 1$, and $L^{\frac{m+1}{m}+\varepsilon}_{\loc}(\Omega_T, \mathbb R^N)$ if $m_c < m < 1$.
\end{corollary}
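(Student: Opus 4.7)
The plan is to upgrade the integrability exponent in Lemma~\ref{lem:strong-lp-convergence} by feeding the uniform bounds from Corollary~\ref{kor:unifbound} into the parabolic Sobolev inequality (Lemma~\ref{lem:parabolic-sobolev}), and then repeat verbatim the measure-theoretic interpolation that concludes the proof of Lemma~\ref{lem:strong-lp-convergence}.

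First, fix $\tilde\Omega \times (t_1,t_2) \Subset \Omega_T$ and apply Lemma~\ref{lem:parabolic-sobolev} to $v = \u_i^{m_i}$ on $\tilde\Omega \times (t_1,t_2)$ with $p=2$ and $r_i = (m_i+1)/m_i$, noticing that $|\u_i^{m_i}|^{r_i} = |u_i|^{m_i+1}$ so that the condition $v \in L^\infty(t_1,t_2;L^{r_i}(\tilde\Omega))$ is exactly the uniform $L^\infty L^{m_i+1}$-bound from Corollary~\ref{kor:unifbound}. This yields
\begin{align*}
\iint_{\tilde\Omega \times (t_1,t_2)} |\u_i^{m_i}|^{\ell_i}\,\dx\dt
\le c\!\iint_{\tilde\Omega \times (t_1,t_2)}\!\!\!\big(|\u_i^{m_i}|^2+|D\u_i^{m_i}|^2\big)\dx\dt\cdot\Big(\sup_{t}\int_{\tilde\Omega}|u_i|^{m_i+1}\dx\Big)^{2/n}
\end{align*}
with $\ell_i = 2(n+r_i)/n = 2 + 2(m_i+1)/(nm_i)$. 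Since $m_i\to m$, we have $r_i\to r:=(m+1)/m$ and $\ell_i\to \ell := 2+2(m+1)/(nm)$; combined with Corollary~\ref{kor:unifbound} this gives a uniform bound for $(\u_i^{m_i})$ in $L^{\ell_0}(\tilde\Omega\times(t_1,t_2),\R^N)$ for any fixed $\ell_0<\ell$ (taking $i$ large enough so $\ell_i\ge \ell_0$).

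Second, verify that $\ell$ lies strictly above the target exponents. If $m\ge 1$, then $\ell>2$ trivially, so one may choose $\ell_0=2+\varepsilon$ for some $\varepsilon>0$. If $m_c<m<1$, a direct computation shows
\begin{align*}
\ell-\tfrac{m+1}{m}=2+\tfrac{2(m+1)}{nm}-\tfrac{m+1}{m}=2-\tfrac{(m+1)(n-2)}{nm},
\end{align*}
and this quantity is positive precisely when $m(n+2)>n-2$, i.e.\ when $m>m_c$. Hence we may again pick $\ell_0=(m+1)/m+\varepsilon$ with $\varepsilon>0$ small.

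Finally, the conclusion follows as at the end of the proof of Lemma~\ref{lem:strong-lp-convergence}. By that lemma we already have $\u_i^{m_i}\to \u^m$ in $L^1_{\loc}$ and hence, passing to a further subsequence, pointwise a.e.~in $\Omega_T$; by Fatou this also gives $\u^m\in L^{\ell_0}_{\loc}(\Omega_T,\R^N)$. Pointwise a.e.\ convergence implies convergence in measure on the bounded cylinder $\tilde\Omega\times(t_1,t_2)$, and for any $q<\ell_0$ and $\delta>0$ we split
\begin{align*}
\iint_{\tilde\Omega\times(t_1,t_2)}\!\!|\u_i^{m_i}-\u^m|^q\dx\dt
\le \delta^q|\tilde\Omega\times(t_1,t_2)|+\bigl|\{|\u_i^{m_i}-\u^m|\ge\delta\}\bigr|^{\frac{\ell_0-q}{\ell_0}}\!\|\u_i^{m_i}-\u^m\|_{L^{\ell_0}}^{q},
\end{align*}
via Hölder's inequality. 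Letting $i\to\infty$ the second term vanishes by convergence in measure combined with the uniform $L^{\ell_0}$-bound, and then $\delta\downarrow0$ kills the first term. Taking $q=2+\varepsilon$ (respectively $q=(m+1)/m+\varepsilon$) for a slightly smaller $\varepsilon>0$ proves the claim. The only subtlety worth flagging is that the sup in $\ell_i$ depends on $m_i$, but since $m_i\to m$ one can, once and for all, choose $\ell_0$ strictly between the target exponent and $\ell$ and then work with all $i$ large enough so that $\ell_i\ge\ell_0$, which is harmless because we are only interested in tail behaviour of the sequence.
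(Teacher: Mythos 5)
Your proof is correct and follows exactly the route the paper indicates: apply the parabolic Sobolev inequality (Lemma~\ref{lem:parabolic-sobolev}) to $v=\u_i^{m_i}$ with $p=2$, $r_i=(m_i+1)/m_i$ and the uniform bounds from Corollary~\ref{kor:unifbound} to get a uniform $L^{\ell_0}_{\loc}$-bound with $\ell_0$ above the target exponent, then repeat the measure-theoretic interpolation at the end of Lemma~\ref{lem:strong-lp-convergence}. The paper only gives this as a one-sentence remark before the corollary; you have worked out the details, including the correct verification that $\ell-\tfrac{m+1}{m}>0$ iff $m>m_c$, so nothing is missing.
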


\begin{remark} \label{remark:wgradlim-identification}
From Corollary~\ref{kor:unifbound} it follows that there exists a weakly converging subsequence of $D\u_i^{m_i}$ in $L^2_{\loc}(\Omega_T,\R^{Nn})$. From Lemma~\ref{lem:strong-lp-convergence} it follows that the limit function is $D \u^m$, i.e. $D \u_i^{m_i} \rightharpoonup D\u^m$ weakly in $L^2_{\loc}(\Omega_T,\R^{Nn})$, as $i \to \infty$.
\end{remark}

\begin{lemma} \label{lem:strong-w1p-convergence}
Let the sequence $(u_i)$ be as in Lemma~\ref{lem:strong-lp-convergence}. By passing to another subsequence, we additionally have
%Then for a subsequence, we additionally have that as $i\to \infty$,
$$
D \u_i^{m_i} \to D \u^m \quad \text{strongly in } L^{2}_{\loc}(\Omega_T,\R^{Nn})
$$
as $i \rightarrow \infty$.
\end{lemma}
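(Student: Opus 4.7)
The plan is to combine the monotonicity condition~\eqref{monotone} with the identity of Lemma~\ref{lem:continuity_local_lem} applied with the trivial choice $v\equiv 0$. Fix $K\Subset\Omega$ and $[t_1,t_2]\Subset(0,T)$, and pick cutoffs $\eta\in C_0^\infty(\Omega)$ and $\alpha\in C_0^\infty((0,T))$ with $\eta\equiv 1$ on $K$ and $\alpha\equiv 1$ on $[t_1,t_2]$. Writing $\A_i:=\A(x,t,u_i,D\u_i^{m_i})$ and $\A:=\A(x,t,u,D\u^m)$, monotonicity applied to the pairs $(u_i,D\u_i^{m_i})$ and $(u,D\u^m)$ gives
\begin{align*}
\mu\iint\eta^2\alpha\,|D\u_i^{m_i}-D\u^m|^2\dx\dt\leq T_1+T_2+T_3+T_4,
\end{align*}
with $T_1=\iint\eta^2\alpha\,\A_i\cdot D\u_i^{m_i}$, $T_2=-\iint\eta^2\alpha\,\A_i\cdot D\u^m$, $T_3=-\iint\eta^2\alpha\,\A\cdot D\u_i^{m_i}$, $T_4=\iint\eta^2\alpha\,\A\cdot D\u^m$.

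By the growth bound~\eqref{growth} and Corollary~\ref{kor:unifbound}, the sequence $(\A_i)$ is bounded in $L^2_{\loc}(\Omega_T,\R^{Nn})$; passing to a subsequence I may assume $\A_i\rightharpoonup\widetilde\A$ weakly in $L^2_{\loc}$. Since $\A\in L^2_{\loc}$ is fixed while $D\u_i^{m_i}\rightharpoonup D\u^m$ weakly by Remark~\ref{remark:wgradlim-identification}, one has $T_3+T_4\to 0$ and $T_2\to-\iint\eta^2\alpha\,\widetilde\A\cdot D\u^m$.

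For $T_1$, I apply Lemma~\ref{lem:continuity_local_lem} (for exponent $m_i$) to the weak solution $u_i$ with $v\equiv 0\in\mathcal V$ and $\zeta=\eta^2\alpha$; as $I_{m_i}(u_i,0)=\tfrac{1}{m_i+1}|u_i|^{m_i+1}$ and the $\partial_t\v^{m_i}$-term vanishes, the identity collapses to
\begin{align*}
\iint\A_i\cdot D(\eta^2\alpha\,\u_i^{m_i})\dx\dt=\tfrac{1}{m_i+1}\iint\eta^2\partial_t\alpha\,|u_i|^{m_i+1}\dx\dt.
\end{align*}
Expanding $D(\eta^2\alpha\,\u_i^{m_i})$ exhibits the left-hand side as $T_1$ plus a lower-order boundary piece involving $D\eta$. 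In the limit $i\to\infty$, the right-hand side converges to $\tfrac{1}{m+1}\iint\eta^2\partial_t\alpha\,|u|^{m+1}$ via $L^1_{\loc}$-convergence $|u_i|^{m_i+1}\to|u|^{m+1}$. The latter follows from pointwise a.e.\ convergence (implied by Corollary~\ref{remark:strong-lp-convergence}) together with uniform integrability from the Gagliardo--Nirenberg inequality in Lemma~\ref{lem:parabolic-sobolev}, whose integrability exponent strictly exceeds $m_i+1$ precisely because $m_i>m_c$. The lower-order term passes to the limit by weak-strong pairing of $\A_i\rightharpoonup\widetilde\A$ with $\u_i^{m_i}\to\u^m$.

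To close, the limit $u$ satisfies $\partial_t u-\Div\widetilde\A=0$ in the distributional sense, obtained by passing $i\to\infty$ in the weak formulation of $u_i$ tested against any $\varphi\in C_0^\infty(\Omega_T,\R^N)$. The proof of Lemma~\ref{lem:continuity_local_lem} uses only the weak formulation and $L^2$-integrability of the vector field, so it transfers verbatim to $u$ and $\widetilde\A$; with $v\equiv 0$ and $\zeta=\eta^2\alpha$ the resulting identity
\begin{align*}
\iint\widetilde\A\cdot D(\eta^2\alpha\,\u^m)\dx\dt=\tfrac{1}{m+1}\iint\eta^2\partial_t\alpha\,|u|^{m+1}\dx\dt
\end{align*}
matches exactly the limit of the identity from the previous paragraph. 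This shows $\lim T_1=\iint\eta^2\alpha\,\widetilde\A\cdot D\u^m$, which cancels $\lim T_2$; together with $T_3+T_4\to 0$ this forces $\limsup_i\iint\eta^2\alpha\,|D\u_i^{m_i}-D\u^m|^2\leq 0$, yielding the strong convergence on $K\times[t_1,t_2]$ along the subsequence. The main obstacle lies in justifying the transfer of Lemma~\ref{lem:continuity_local_lem} to the weak limit field $\widetilde\A$ (which a priori satisfies neither~\eqref{growth} nor~\eqref{monotone}); this is legitimate precisely because the derivation of that lemma rests solely on the weak equation together with $L^2$-integrability of the vector field.
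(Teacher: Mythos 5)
Your proof is correct, but it takes a genuinely different route from the paper's. The paper controls the cross term $\iint\eta\xi\,\A_i\cdot D\u_i^{m_i}$ by inserting $D\mollifytime{\u^m}{h}$, testing the mollified equation with $\varphi=\eta\xi(\u_i^{m_i}-\mollifytime{\u^m}{h})$, massaging the parabolic part via $\partial_t\mollifytime{u_i}{\lambda}=\frac{1}{\lambda}(u_i-\mollifytime{u_i}{\lambda})$ and the sign fact $(a-b)\cdot(\a^m-\mathbf b^m)\ge 0$, and finally sending first $\lambda\to0$, then $i\to\infty$, then $h\to0$. You instead introduce a weak $L^2_{\loc}$ cluster point $\widetilde\A$ of $\A_i$, pass to the limit in the distributional equation to get $\partial_t u-\Div\widetilde\A=0$, and apply the energy identity of Lemma~\ref{lem:continuity_local_lem} (with $v\equiv0$) once for each $u_i$ and once for the limit pair $(u,\widetilde\A)$. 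Matching the two identities shows $\lim T_1=-\lim T_2$, closing the estimate without the double limit in $h$. The paper's version never needs to name $\widetilde\A$, whereas yours never needs the cascaded mollification limits; they both rest ultimately on the same algebraic inequality and the same strong convergences from Lemma~\ref{lem:strong-lp-convergence} and Corollary~\ref{remark:strong-lp-convergence}.

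Two points you should make precise when writing this up. First, to invoke Lemma~\ref{lem:continuity_local_lem} for the pair $(u,\widetilde\A)$ you must not cite the statement (which presumes~\eqref{growth} and~\eqref{monotone}) but re-run its proof: that requires extending the weak formulation $\iint[-u\cdot\partial_t\varphi+\widetilde\A\cdot D\varphi]=0$ from $C_0^\infty$ to the class $W^{1,2}(0,T;L^2)\cap L^2(0,T;W^{1,2}_0)$ with compact support (and, for $m<1$, $\partial_t\varphi\in L^{(1+m)/m}_{\loc}$). This density step is valid here because $\widetilde\A\in L^2_{\loc}$ and $u\in L^{m+1}_{\loc}$, but it is a step, and it is the point where you implicitly use that $u$ has the regularity of Definition~\ref{def:weak_solution}, supplied by Corollary~\ref{kor:weak-convergence}. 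Second, the $L^1_{\loc}$ convergence of $|u_i|^{m_i+1}$ is fine as you describe, but note that the pointwise claim involves an $i$-dependent exponent: you need both $\u_i^{m_i}\to\u^m$ a.e.\ and $m_i\to m$ together with continuity of $(y,s)\mapsto|y|^{s}$; and the uniform-integrability margin from Lemma~\ref{lem:parabolic-sobolev} must be taken uniformly over the tail of $(m_i)$, which is legitimate since $m_i\to m>m_c$. Both are straightforward but worth stating.
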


\begin{proof}
Let $\xi \in C_0^\infty ( (0,T) ; [0,1] )$ and $\eta \in C_0^\infty(\Omega,\R_{\geq 0})$. By using the monotonicity~\eqref{monotone} we can estimate
\begin{align*}
\iint_{\Omega_T} \eta \xi &| D \u_i^{m_i} - D \u^m |^2\, \d x \d t \\
&\le c \iint_{\Omega_T} \eta \xi \left( \mathbf A(x,t,u_i,D\u_i^{m_i}) - \mathbf A(x,t,u,D\u^m) \right) \cdot \left( D \u_i^{m_i} - D\u^m \right)  \, \d x \d t \\
&= c \iint_{\Omega_T} \eta \xi \mathbf A(x,t,u_i,D\u_i^{m_i})  \cdot \left( D \u_i^{m_i} - D\u^m \right)  \, \d x \d t \\
&\hspace{8mm}-  c \iint_{\Omega_T} \eta \xi \mathbf A(x,t,u,D\u^{m})  \cdot \left( D \u_i^{m_i} - D\u^m \right)  \, \d x \d t \\
&=: c\, \mathrm I_i - c\, \mathrm {II}_i.
\end{align*}
From Remark~\ref{remark:wgradlim-identification} it follows that 
$$
\lim_{i \to \infty} \mathrm{II}_i = 0,
$$
so we need to focus only on the first term. For this one, we add and subtract mollified terms to obtain
%This one we can write further as
%
\begin{align*}
\mathrm I_i &= c \iint_{\Omega_T} \eta \xi \mathbf A(x,t,u_i,D\u_i^{m_i})  \cdot \left( D \u_i^{m_i} - D\llbracket \u^m \rrbracket_h \right)  \, \d x \d t \\
&\hspace{8mm}+ c \iint_{\Omega_T}  \eta \xi \mathbf A(x,t,u_i,D\u_i^{m_i})  \cdot \left( D\llbracket \u^m \rrbracket_h - D\u^m \right)  \, \d x \d t \\
& =: c\, \mathrm I_i^{(1)} + c\, \mathrm I_i^{(2)}.
\end{align*}
The latter integral we can estimate as
\begin{align*}
|\mathrm I_i^{(2)}| &\le \| \eta \xi \mathbf A(x,t,u_i,D\u_i^{m_i}) \|_{L^2(\Omega_T)} \|\eta \xi ( D\llbracket \u^m \rrbracket_h - D\u^m ) \|_{L^2(\Omega_T)} \\
&\le L \| \eta \xi D\u_i^{m_i} \|_{L^2(\Omega_T)} \|\eta \xi ( D\llbracket \u^m \rrbracket_h - D\u^m ) \|_{L^2(\Omega_T)} \\
&\le c \|\eta \xi ( D\llbracket \u^m \rrbracket_h - D\u^m ) \|_{L^2(\Omega_T)},
\end{align*} 
by using growth conditions~\eqref{growth} and the uniform bound from Corollary~\ref{kor:unifbound}. Furthermore, the last expression on the right hand side converges to zero as $h \to 0$ by Lemma~\ref{lem:mollifier}. In order to estimate the term $\mathrm I_i^{(1)}$ we use the equation. We test the mollified weak formulation~\eqref{eq:mollifiedEquation_loc} with the test function $\varphi = \eta \xi (\u_i^{m_i} - \llbracket \u^m \rrbracket_h)$ and small enough $\tau_1 >0$ in the mollifications. \\
Unless $m$ is close to the critical parameter, this function is clearly admissible. If $m < {(n-1)}/{(n+1)}$, 
%we must additionally show that the function is in $L^{{(1+m_i)}/{m_i}}(\Omega_T)$, since we are testing with the equation for $u_i$. ... or ... 
 we have to impose stronger assumptions such that the first integral in \eqref{eq:mollifiedEquation_loc} is finite. By the Gagliardo-Nirenberg inequality, see Lemma \ref{lem:parabolic-sobolev}, one can see that $u_i \in L^{1+m_i+\varepsilon}(\Omega_T, \mathbb R^N)$ for some $\varepsilon>0$. Thus it suffices if $\varphi$ is integrable to an exponent which is slightly smaller than $(1+m_i)/m_i$. For large $i$, this is also the case for $\u^m$ and consequently, also for $\mollifytime {\u^m} h$. \\
Now for the divergence part we obtain
\begin{align*}
\iint_{\Omega_T} \xi \llbracket &\mathbf A(x,t,u_i, D \u_i^{m_i}) \rrbracket_\lambda \cdot D [\eta (\u_i^{m_i} - \llbracket \u^m \rrbracket_h) ]   \, \d x \d t \\
&\xrightarrow{\lambda \to 0} \iint_{\Omega_T} \xi \mathbf A(x,t,u_i, D \u_i^{m_i})  \cdot D [\eta (\u_i^{m_i} - \llbracket \u^m \rrbracket_h) ]   \, \d x \d t \\
&= \mathrm I_i^{(1)} + \iint_{\Omega_T} \xi \mathbf A(x,t,u_i, D \u_i^{m_i})  \cdot (\u_i^{m_i} - \llbracket \u^m \rrbracket_h) \otimes D \eta   \, \d x \d t.
\end{align*}
On the other hand, we estimate the parabolic part. As in the proof of Lemma \ref{lem:caccioppoli}, we once again use the formula for the time derivative of the mollification from Lemma \ref{lem:mollifier} and the fact that $(a-b)(\mathbf{a}^m -\mathbf{b}^m ) \geq 0$ for all $a,b \in \mathbb{R}^N$ and $m>0$. This way, we obtain
%1
\begin{align*}
\iint_{\Omega_T}  &\partial_t \llbracket u_i \rrbracket_\lambda \cdot \varphi  \, \d x \d t \\
& = \iint_{\Omega_T} \eta \xi \partial_t \llbracket u_i \rrbracket_\lambda \cdot (\u_i^{m_i} - \llbracket \u^m \rrbracket_h)  \, \d x \d t \\
&=\frac{1}{\lambda} \iint_{\Omega_T} \eta \xi ( u_i - \llbracket u_i \rrbracket_\lambda) \cdot (\u_i^{m_i} - \llbracket \u_i \rrbracket_\lambda^{m_i}) \, \d x \d t \\
&\hspace{8mm}+ \iint_{\Omega_T} \eta \xi \left( \partial_t\llbracket u_i  \rrbracket_\lambda \cdot \llbracket \u_i \rrbracket_\lambda^{m_i}  - \partial_t \llbracket u_i \rrbracket_\lambda \cdot   \llbracket \u^m \rrbracket_h \right) \, \d x \d t \\
&\ge \iint_{\Omega_T} \eta \xi \left( \partial_t\llbracket u_i \rrbracket_\lambda \cdot \llbracket \u_i \rrbracket_\lambda^{m_i}  - \partial_t \llbracket u_i \rrbracket_\lambda \cdot  \llbracket \u^m \rrbracket_h \right) \, \d x \d t \\
&= \iint_{\Omega_T} \eta \left( \frac{- 1}{m_i + 1} \partial_t \xi  |\llbracket u_i \rrbracket_\lambda|^{m_i+1}  + \xi  \llbracket u_i \rrbracket_\lambda \cdot \partial_t \llbracket \u^m \rrbracket_h + \partial_t \xi \llbracket u_i \rrbracket_\lambda \cdot \llbracket \u^m \rrbracket_h \right) \, \d x \d t \\
&\xrightarrow{\lambda \to 0} \iint_{\Omega_T} \eta \left( \frac{- 1}{m_i + 1} \partial_t \xi  |u_i|^{m_i+1}  + \xi u_i \cdot \partial_t \llbracket \u^m \rrbracket_h + \partial_t \xi u_i \cdot \llbracket \u^m \rrbracket_h \right) \, \d x \d t.
\end{align*}
By collecting the previous estimates we have 
\begin{align*}
\mathrm I_i^{(1)}
	\leq \iint_{\Omega_T} \eta
		&\left[ \partial_t \xi
			\left( \frac{1}{m_i + 1}  |u_i|^{m_i+1}
			- u_i \cdot \llbracket \u^m \rrbracket_h \right)
			- \xi u_i  \cdot \partial_t \llbracket \u^m \rrbracket_h \right]
		\, \d x \d t\\
	&- \iint_{\Omega_T} \xi \mathbf A(x,t,u_i, D \u_i^{m_i})  \cdot (\u_i^{m_i} - \llbracket \u^m \rrbracket_h) \otimes D \eta   \, \d x \d t.
\end{align*}
For the first integral on the right hand side we use the convergence properties of $u_i$ (and pass to a subsequence, if needed). For the second integral, we use Cauchy-Schwarz combined with the growth condition \eqref{growth} and the uniform bound for $|D\u_i^{m_i}|^2$. Together, this implies
\begin{align*}
\limsup_{i\to \infty} \mathrm I_i^{(1)}
	\leq \iint_{\Omega_T} \eta &\left[ \partial_t \xi \left( \frac{1}{m + 1}  |u|^{m+1} - u \cdot \llbracket \u^m \rrbracket_h \right) - \xi u  \cdot  \partial_t \llbracket \u^m \rrbracket_h \right] \, \d x \d t\\
	&+ c \|\xi (\u^{m} - \llbracket \u^m \rrbracket_h) \otimes D \eta \|_{L^2(\Omega_T)}.
\end{align*}
We further estimate the last term in the first integral. We add and subtract $\mollifytime {\u^m}h ^{\frac 1 m}$ and obtain similarly to before that
\begin{align*}
- &\iint_{\Omega_T} \eta \xi u  \cdot \partial_t \llbracket \u^m \rrbracket_h \, \d x \d t \\
&= - \iint_{\Omega_T} \eta \xi \left( \frac{1}{h} (u-\llbracket \u^m \rrbracket_h^\frac{1}{m})  \cdot  ( \u^m - \llbracket \u^m \rrbracket_h ) +  \llbracket \u^m \rrbracket_h^\frac{1}{m} \cdot  \partial_t \llbracket \u^m \rrbracket_h  \right)\, \d x \d t \\
&\le - \iint_{\Omega_T} \eta \xi   \llbracket \u^m  \rrbracket_h^\frac{1}{m}  \cdot \partial_t \llbracket \u^m \rrbracket_h  \, \d x \d t \\
&=\frac{m}{m+1} \iint_{\Omega_T} \eta \partial_t \xi   |\llbracket \u^m \rrbracket_h|^\frac{m+1}{m}  \, \d x \d t.
\end{align*}
By combining this with the earlier estimates and using the obtained convergence properties, we eventually arrive at 
\begin{align*}
\limsup_{i\to \infty} &\iint_{\Omega_T} \eta \xi | D \u_i^{m_i} - D \u^m |^2\, \d x \d t \\
&\leq c  \|\eta \xi ( D\llbracket \u^m \rrbracket_h - D\u^m ) \|_{L^2(\Omega_T)} \\
&\hspace{5mm}+ \iint_{\Omega_T} \eta \partial_t \xi \left(\frac{1}{m + 1}  |u|^{m+1} + \frac{m}{m+1} |\llbracket \u^m \rrbracket_h|^\frac{m+1}{m} - u  \cdot  \llbracket \u^m \rrbracket_h \right) \, \d x \d t \\
&\hspace{5mm}+ \|\xi (\u^{m} - \llbracket \u^m \rrbracket_h) \otimes D \eta \|_{L^2(\Omega_T)}   \\
&\longrightarrow 0
\end{align*}
as $h \to 0$ by Lemma~\ref{lem:mollifier}, from which the claim follows.
\end{proof}
Then finally we complete the proof of Theorem~\ref{theorem} with the following Lemma, which states that the limit function $u$ solves the corresponding limit problem.
\begin{lemma}\label{lem:limit_function_is_sol}
Let the sequences $(u_i)$ and $(m_i)$ be as in Lemmas~\ref{lem:strong-lp-convergence} and~\ref{lem:strong-w1p-convergence}. Then the limit function $u$ satisfies the problem with the limit parameter $m$, i.e.,
\begin{align*}
\lim_{i\to \infty} \Bigg( &\iint_{\Omega_T} \big[ - u_i  \cdot \partial_t \varphi + \mathbf A (x,t,u_i,D\u_i^{m_i}) \cdot D \varphi \big] \, \d x \d t \Bigg) \\
&= \iint_{\Omega_T} \big[ - u \cdot  \partial_t \varphi + \mathbf A (x,t,u,D\u^{m}) \cdot D \varphi \big] \, \d x \d t = 0,
\end{align*}
for any test function $\varphi \in C_0^{\infty}(\Omega_T,\R^N)$. 
\end{lemma}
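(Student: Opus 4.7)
My plan is to pass to the limit separately in the two pieces of the weak identity~\eqref{weak-solution} satisfied by each $u_i$, namely $-u_i\cdot\partial_t\varphi$ and $\mathbf{A}(x,t,u_i,D\u_i^{m_i})\cdot D\varphi$. Since for each $i$ this identity vanishes, the vanishing for $u$ follows once convergence in both terms is shown. Throughout, I fix $\varphi\in C_0^\infty(\Omega_T,\R^N)$ with $\spt\varphi\Subset\tilde\Omega\times(t_1,t_2)\Subset\Omega_T$, and I am free to pass to further subsequences.

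For the parabolic term, Lemma~\ref{lem:strong-lp-convergence} (after extracting a pointwise a.e.\ subsequence) gives $\u_i^{m_i}\to\u^m$ a.e. The map $(y,\sigma)\mapsto |y|^{\sigma-1}y$ (with value $0$ at $y=0$) is continuous on $\R^N\times(0,\infty)$: for $y\neq 0$ this is clear, and for $y=0$ one uses $|y_i|^{\sigma_i}\to 0$ whenever $y_i\to 0$ and $\sigma_i\to 1/m>0$. Applied with $\sigma_i=1/m_i$, this yields $u_i\to u$ a.e. To upgrade this to $L^1_{\loc}$ convergence on $\spt\varphi$, I would invoke Corollary~\ref{kor:unifbound}, which bounds $u_i$ uniformly in $L^\infty(t_1,t_2;L^{m_i+1}(\tilde\Omega))$; since $m>m_c\ge 0$, for large $i$ one has $m_i+1\geq p$ for some fixed $p>1$, so H\"older yields a uniform $L^p$ bound on $\tilde\Omega\times(t_1,t_2)$. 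Vitali's theorem then gives $u_i\to u$ in $L^r$ on $\spt\varphi$ for $r<p$, which suffices to conclude
\begin{align*}
\iint_{\Omega_T} u_i\cdot\partial_t\varphi\,\dx\dt \longrightarrow \iint_{\Omega_T} u\cdot\partial_t\varphi\,\dx\dt
\end{align*}
against the bounded, compactly supported $\partial_t\varphi$.

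For the divergence term, I would combine the a.e.\ convergence $u_i\to u$ just obtained with a.e.\ convergence $D\u_i^{m_i}\to D\u^m$ extracted from the strong $L^2_{\loc}$ convergence of Lemma~\ref{lem:strong-w1p-convergence}. Carath\'eodory continuity of $\mathbf A$ then yields $\mathbf A(x,t,u_i,D\u_i^{m_i})\to \mathbf A(x,t,u,D\u^m)$ a.e., while the growth bound $|\mathbf A(x,t,u_i,D\u_i^{m_i})|\leq L|D\u_i^{m_i}|$ together with the strong $L^2_{\loc}$ convergence of $D\u_i^{m_i}$ furnishes an $L^2$-convergent majorant on $\spt\varphi$. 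A standard generalized dominated convergence (or Vitali) argument then promotes the a.e.\ convergence to strong convergence of $\mathbf A(x,t,u_i,D\u_i^{m_i})$ in $L^2_{\loc}(\Omega_T,\R^{Nn})$, which is enough to pair with $D\varphi\in L^\infty_c$. Summing the two limits gives the claimed equality with limit integrand, and the right-hand side $0$ is inherited from each approximating identity. The only subtlety is the drift of the integrability exponent $m_i+1$ of $u_i$; this is precisely what is tamed by the uniform $L^p$ bound with $p>1$ independent of $i$ (large) together with the joint continuity of the inverse map in $(y,1/m_i)$.
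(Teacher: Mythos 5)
Your proposal is correct and follows essentially the same route as the paper: a.e.\ convergence of $u_i$ to $u$ (via pointwise convergence of $\u_i^{m_i}$ and joint continuity of the power map in both $y$ and the exponent), Vitali together with the uniform $L^p$ bound from Corollary~\ref{kor:unifbound} to pass the parabolic term, a.e.\ convergence of the pair $(u_i,D\u_i^{m_i})$ combined with Carath\'eodory continuity of $\mathbf A$, and a uniform-integrability argument for the divergence term. The only cosmetic deviation is that you invoke generalized dominated convergence with the $L^2$-convergent majorant $L|D\u_i^{m_i}|$ to obtain full $L^2_{\loc}$ convergence of $\mathbf A(x,t,u_i,D\u_i^{m_i})$, whereas the paper only uses the uniform $L^2$ bound plus the ``convergence in measure + H\"older'' trick from Lemma~\ref{lem:strong-lp-convergence} to get $L^q_{\loc}$ for $q<2$; both suffice to pair with $D\varphi\in L^\infty_c$.
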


\begin{proof}
Lemma~\ref{lem:strong-lp-convergence} and Corollary~\ref{remark:strong-lp-convergence} imply that $u_i \to u$ strongly in $L^1_{\loc}(\Omega_T,\R^N)$, from which it follows immediately that the parabolic part converges. By the same argument on a subsequence level we have a.e. pointwise convergence for the sequence $(u_i)$. Also, the strong convergence for the gradients $D\u_i^{m_i}$ from Lemma~\ref{lem:strong-w1p-convergence} yields a.e. pointwise convergence for a subsequence of the gradients. That is, $(u_i, D\u_i^{m_i}) \to (u,D\u^m)$ a.e. in any relatively compact subdomain of $\Omega_T$. By continuity of $\mathbf A$ with respect to the pair of the last two variables it follows that $\mathbf A(x,t,u_i,D\u_i^{m_i}) \to \mathbf A(x,t,u,D\u^m)$ a.e. as well. Since we are working in bounded domains, it follows that pointwise convergence a.e. implies convergence in measure. By a measure theoretic argument as in the final step in Lemma~\ref{lem:strong-lp-convergence} and by Corollary~\ref{kor:unifbound} it follows that $\mathbf A(x,t,u_i,D\u_i^{m_i}) \to \mathbf A(x,t,u,D\u^m)$ in $L^q_{\loc}(\Omega_T,\R^{Nn})$ for any $q<2$. This implies the convergence of the divergence part, which completes the proof.
\end{proof}

\begin{remark}\label{rem:whole-seq}
	So far we only showed convergence of a subsequence. However, we apply a similar argument as in \cite[p. 43]{Kinnunen-Parviainen}: For every subsequence of $\u_i^{m_i}$ we find another subsequence converging to some limit $\v^m$. By assumption \eqref{assumption:weak-convergence}, $\u_i^{m_i} \rightharpoonup \u^m$ weakly in $L^2_{\loc}(\Omega_T,\mathbb R^N)$. As weak limits are unique, we have $\u^m=\v^m$ for each limit. Since every subsequence has a converging subsequence with limit $\u^m$, the original sequence must converge and its limit must be $\u^m$.
	%Assume $\u_i^{m_i}$ diverges. This can happen if $\norm{ \u_i^{m_i}} \rightarrow \infty$, which is a contradiction to the uniform boundedness from corollary \ref{kor:unifbound}. Then we must have two subsequences converging to a different limit function. We apply theorem \ref{theorem} for both subsequences and retrieve possibly two limits $\v_1^m$ and $\v_2^m$. By assumption \eqref{assumption:weak-convergence}, $\u_i^{m_i} \rightharpoonup \u^m$ weakly in $L^2(\Omega_T)$. As weak limits are unique, we have $\u^m=\v_1^m=\v_2^m$. Consequently, $\u_i^{m_i}$ converges in $L^2_{\loc} \big( 0,T;W_{\loc}^{1,2}(\Omega_T,\R^{N}) \big)$ and its limit is $\u^m$.
\end{remark}

\section{Stability for the Cauchy-Dirichlet problem}

\noindent
We turn our attention to the Cauchy-Dirichlet problem. First, we inspect why the conditions for the boundary datum $g$ were chosen in \eqref{assumption:g}. These imply the following Lemma, which collects all the properties that we require for our boundary function $g$.

\begin{lemma}\label{lem:g_conditions}
For $g: \Omega_T \rightarrow \mathbb R^N$ satisfying the conditions \eqref{assumption:g}, there holds
\begin{align*}
g \in C \big( [0,T],L^{1+m_i}(\Omega,\mathbb R^N) \big)
\quad\text{ with }\quad
\g^{m_i} \in L^2(0,T;W^{1,2}(\Omega,\mathbb R^N))
\end{align*}
and additionally $\partial_t \g^{m_i} \in L^{\frac{m_i+1}{m_i}}(\Omega_T, \mathbb R^N)$ 
for all $i \in \mathbb N$. Further, we have the bounds
\begin{align*}
\sup_{i \in \mathbb{N}}
	\iint_{\Omega_T} \big( |g|^{1+m_i} + |\g^{m_i}|^2 + |D\g^{m_i}|^2 + |\partial_t \g^{m_i}|^{\frac {{m_i}+1}{m_i}} \big) \dx \dt
	<\infty
\end{align*}
and
\begin{align*}
\sup_{i \in \mathbb{N}}
\sup_\tau \int_{\Omega \times \lbrace \tau \rbrace}  |g|^{m_i+1} \dx  < \infty.
\end{align*}
For some $\varepsilon>0$ and as $i \longrightarrow \infty$, we have
\begin{align*}
\hspace{3cm}
\g^{m_i} &\longrightarrow \g^m \quad &&\text{ strongly in } L^{2+\varepsilon}(\Omega_T,\mathbb R^N), \\
D\g^{m_i} &\longrightarrow D\g^m \quad &&\text{ strongly in } L^{2+\varepsilon}(\Omega_T,\mathbb R^{Nn}), \\
\partial_t \g^{m_i} &\longrightarrow \partial_t \g^m \quad &&\text{ strongly in } L^{(1+m)/m+\varepsilon}(\Omega_T,\mathbb R^N)
\hspace{3cm}
\end{align*}
and also
\begin{align*}
\hspace{3cm}
\g^{1+m_i} &\longrightarrow \g^{1+m} \quad &&\text{ strongly in } L^{1+\varepsilon}(\Omega_T,\mathbb R^N), \\
\g^{m_i} &\longrightarrow \g^m \quad &&\text{ strongly in } L^{(1+m)/m+\varepsilon}(\Omega_T,\mathbb R^{N}).
\hspace{3cm}
\end{align*}
\end{lemma}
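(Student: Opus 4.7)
The plan is to reduce every quantity involving $\g^{m_i}$ to expressions in $g$ and $\g^{\widetilde m}$ via chain-rule identities, apply H\"older's inequality with exponents calibrated to \eqref{assumption:g}, and finally upgrade the resulting pointwise a.e.\ convergences to strong convergence using Vitali's theorem. The starting identity is
\begin{align*}
  \g^{m_i} = |g|^{m_i - \widetilde m}\, \g^{\widetilde m},
\end{align*}
which is immediate from $|g| = |\g^{\widetilde m}|^{1/\widetilde m}$. Invoking the chain rule for Sobolev functions of the form $y \mapsto |y|^{\alpha - 1} y$ with $\alpha = m_i/\widetilde m$, which exceeds $1$ for $i$ large enough since $\widetilde m < m$, one obtains the pointwise bounds
\begin{align*}
  |D \g^{m_i}| \leq \tfrac{m_i}{\widetilde m} |g|^{m_i - \widetilde m} |D\g^{\widetilde m}|,
  \qquad
  |\partial_t \g^{m_i}| \leq \tfrac{m_i}{\widetilde m} |g|^{m_i - \widetilde m} |\partial_t \g^{\widetilde m}|.
\end{align*}
Pointwise a.e.\ convergence of $\g^{m_i}$, $D\g^{m_i}$, $\partial_t \g^{m_i}$, and $\g^{1+m_i}$ to their counterparts with exponent $m$ is then immediate as $m_i \to m$ (with the convention that $D\g^m = 0$ and $\partial_t\g^m = 0$ a.e.\ on $\{g = 0\}$, which is the standard Sobolev behaviour).

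For the uniform bounds I would invoke H\"older's inequality. For the spatial gradient, splitting with the conjugate pair $(\beta/2, \beta/(\beta - 2))$ gives
\begin{align*}
  \iint_{\Omega_T} |D\g^{m_i}|^2 \d x \d t
  \leq C \bigg( \iint_{\Omega_T} |D\g^{\widetilde m}|^\beta \d x \d t \bigg)^{\!2/\beta}
  \bigg( \iint_{\Omega_T} |g|^{2(m_i - \widetilde m) \beta/(\beta - 2)} \d x \d t \bigg)^{\!(\beta - 2)/\beta}.
\end{align*}
Because $\g^{\widetilde m} \in L^\beta(\Omega_T)$ gives $|g| \in L^{\beta \widetilde m}(\Omega_T)$, the last factor is uniformly controlled as soon as $2(m_i - \widetilde m)\beta/(\beta - 2) \leq \beta \widetilde m$, which rearranges to $\beta \geq 2 m_i/\widetilde m$ and holds for large $i$ by the strict hypothesis $\beta > 2m/\widetilde m$. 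An analogous H\"older split exploiting $\partial_t \g^{\widetilde m} \in L^{\gamma/\widetilde m}$ and $g \in L^\gamma(\Omega_T)$ yields the uniform bound on $\iint |\partial_t \g^{m_i}|^{(m_i+1)/m_i}$; the decisive balance of the conjugate exponents reduces in the end to $m_i + 1 \leq \gamma$, which is guaranteed by $\gamma > 1 + m$. The remaining bounds on $|\g^{m_i}|^2$, $|g|^{1+m_i}$ and $\sup_\tau \int |g|^{1+m_i} \d x$, together with the time continuity $g \in C^0([0,T], L^{1+m_i}(\Omega))$, all follow from the continuous embedding $L^\gamma(\Omega) \hookrightarrow L^{1+m_i}(\Omega)$ on the bounded domain $\Omega$ combined with $g \in C^0([0,T], L^\gamma(\Omega))$.

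Finally, each of the H\"older estimates above admits a small amount of slack because the hypotheses $\beta > 2m/\widetilde m$ and $\gamma > 1 + m$ are strict. This extra room produces uniform integrability at an exponent strictly larger than the target in each stated convergence. Vitali's theorem then upgrades the pointwise a.e.\ convergences of the first step to strong convergence in $L^{2+\varepsilon}(\Omega_T, \R^N)$, $L^{2+\varepsilon}(\Omega_T, \R^{Nn})$, $L^{(1+m)/m + \varepsilon}(\Omega_T, \R^N)$, $L^{1+\varepsilon}(\Omega_T, \R^N)$, and $L^{(1+m)/m + \varepsilon}(\Omega_T, \R^N)$ respectively, for a common $\varepsilon > 0$ obtained by taking the minimum of the individual slacks. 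The main obstacle throughout is precisely this simultaneous exponent book-keeping: one has to match several different H\"older splits against the three hypotheses in \eqref{assumption:g} and verify, for each convergence, that the surplus $\varepsilon$ can be chosen uniformly in $i$.
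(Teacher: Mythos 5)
Your proposal is correct and follows essentially the same route as the paper: you use the identical chain-rule decomposition $\g^{m_i} = |g|^{m_i-\widetilde m}\g^{\widetilde m}$, exploit the strict inequalities $\beta > 2m/\widetilde m$ and $\gamma > 1+m$ to gain an exponent surplus, and upgrade pointwise a.e.\ convergence to strong convergence via uniform integrability. The only cosmetic differences are that the paper uses Young's inequality to get a pointwise majorant independent of $i$ (and works directly at exponent $2+\varepsilon$), whereas you split via H\"older at the integral level first and then note the slack; and you invoke Vitali's theorem by name where the paper simply states that boundedness in $L^{2+\varepsilon}$ plus pointwise convergence gives strong convergence in $L^2$ for a slightly smaller $\varepsilon$.
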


\begin{proof}
We illustrate the proof for the spatial gradients. The other elements require analogous arguments. We start by rewriting
\begin{align}\label{eq:g_grad_rewriting}
D\g^{m_i}
	= D(\g^{\tilde m})^{{m_i}/{\tilde m}}
	= \tfrac{m_i}{\tilde m} \g^{m_i-\tilde m} D\g^{\tilde m}.
\end{align}
Note that since $\beta > 2 \frac{m}{\tilde m}$, there is $\varepsilon>0$ such that $\beta> (2+\varepsilon) \frac{m}{\tilde m}$. We can thus estimate using Young's inequality
\begin{align*}
|D\g^{m_i}|^{2+\varepsilon}
	&\leq c |\g^{m_i-\tilde m}|^{2+\varepsilon}
		|D\g^{\tilde m}|^{2+\varepsilon}
	\leq c |\g^{m_i-\tilde m}|
		^{\frac{({2+\varepsilon})\beta}{\beta-2-\varepsilon}}
		+ c |D\g^{\tilde m}|^\beta.
\end{align*}
One can calculate that $(m_i-\tilde m)(2+\varepsilon)\beta/(\beta-2-\varepsilon) < \tilde m \beta$ using the property $\beta> (2+\varepsilon) \frac{m}{\tilde m}$. It follows that
\begin{align*}
|D\g^{m_i}|^{2+\varepsilon}
	&\leq c (1+|g|)^{\tilde m \beta} + c |D\g^{\tilde m}|^\beta.
\end{align*}
The right hand side is integrable, so we conclude that $D\g^{m_i} \in L^2(\Omega_T,\mathbb R^{Nn})$ for all $i$. Since the right hand side is independent of $i$, the estimate
\begin{align*}
\sup_{i \in \mathbb N} \iint_{\Omega_T} |D\g^{m_i}|^2 \dx \dt < \infty
\end{align*}
follows at once. Further, \eqref{eq:g_grad_rewriting} implies that $D\g^{m_i}$ converges to $D\g^{m}$ pointwise a.e. in $\Omega_T$. Combining this with the fact that $D\g^{m_i}$ forms a bounded sequence in $L^{2+\varepsilon}(\Omega_T, \mathbb R^{Nn})$, the strong convergence follows, for a slightly smaller $\varepsilon>0$. \\
The elements regarding $\g^{1+m_i}$ or the partial time derivative of $\g^{m_i}$ also make use of the properties regarding $\gamma > 1+m$.
\end{proof}

\subsection{Energy estimate}
\hfill \\[1ex]
To prove Theorem \ref{theorem:global}, one could repeat the same procedure as in the local setting, possibly using global higher integrability results in \cite{Moring-Scheven-Schwarzacher-Singer} to improve the convergence, at least in the degenerate case. However, we have chosen to use the local result. We use the weak convergence of $\u_i^{m_i}$ and the boundedness of $\u_i^{1+m_i}$, in the singular case. These will emerge from the following energy estimate.

\begin{lemma}\label{lem:caccioppoli_global}
Let $u$ be a weak solution to Equation %\eqref{por-med-eq}
\eqref{equation:global-PME} with exponent $m$ and with initial and boundary values $g$ satisfying \eqref{assumption:g}, in the sense of Definition \ref{def:global_weak_solution},
%and satisfying \eqref{lateral-boundary} and \eqref{initial-boundary}
where the vector field $\mathbf A$ satisfies the growth conditions~\eqref{growth}. Then we have the estimate
\begin{align*}
\sup_\tau \int_{\Omega \times \lbrace \tau \rbrace} & |u|^{m+1} \dx 
+\iint_{\Omega_T} \big(  |\u^ {m}|^2 + |D\u^{m}|^2  \big) \dx \dt \\
	\leq \;\; 
		&C_m \, \sup_\tau \int_{\Omega \times \lbrace \tau \rbrace}  |g|^{m+1} \dx + C_m \iint_{\Omega_T} \big( 
		%|g|^{1+m} +
		 |\g^m|^2 + |D\g^{m}|^2 + |\partial_t \g^{m}|^{\frac {{m}+1}{m}} \big) \dx \dt
\end{align*}
for a constant $C_m= C_m{(T,\nu,L,\mathrm{diam}(\Omega))} >0$ with
\begin{align*}
\sup_{m \in (m_c,M)} C_m < \infty
\end{align*}
for any given $M>m_c$.
\end{lemma}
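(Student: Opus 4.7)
The strategy is to test the mollified weak formulation \eqref{eq:mollifiedEquation} with a test function that vanishes on the lateral boundary and isolates a time slice. Take $\tau \in (0,T)$, $\sigma \in (0,\tau)$ small, and define a Lipschitz time cutoff $\xi_\sigma(t) = 1$ on $[0,\tau-\sigma]$, $\xi_\sigma(t) = (\tau-t)/\sigma$ on $[\tau-\sigma,\tau]$, $\xi_\sigma(t)=0$ on $[\tau,T]$. Plug $\varphi = \xi_\sigma(\u^m - \g^m)$ into \eqref{eq:mollifiedEquation}. By \eqref{lateral-boundary} we have $\varphi(\cdot,t)\in W^{1,2}_0(\Omega,\R^N)$ for a.e. $t$, and $\varphi \in L^{(1+m)/m}(\Omega_T)$ follows from $u,g \in L^{1+m}(\Omega_T)$, so $\varphi$ is admissible.

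For the parabolic term I would split
\begin{align*}
\partial_t\mollifytime{u}{h}\cdot(\u^m-\g^m)
= \partial_t\mollifytime{u}{h}\cdot\mollifytime{\u}{h}^m
+ \partial_t\mollifytime{u}{h}\cdot(\u^m-\mollifytime{\u}{h}^m)
- \partial_t\mollifytime{u}{h}\cdot\g^m,
\end{align*}
and reuse the same trick as in Lemma~\ref{lem:caccioppoli}: the middle term equals $\tfrac{1}{h}(u-\mollifytime{u}{h})\cdot(\u^m-\mollifytime{\u}{h}^m)\ge 0$ and can be dropped. The first term is the total time derivative $\tfrac{1}{m+1}\partial_t|\mollifytime{u}{h}|^{m+1}$; multiplying by $\xi_\sigma$ and integrating by parts in $t$ produces $\tfrac{1}{(m+1)\sigma}\int_{\tau-\sigma}^\tau\!\!\int_\Omega|\mollifytime{u}{h}|^{m+1}\dx\dt$ plus a contribution at $t=0$ that vanishes because $\mollifytime{u}{h}(\cdot,0)=0$ in the formula used in \eqref{eq:mollifiedEquation}. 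For the third term I would integrate by parts in $t$, transferring the derivative onto $\g^m$ (which is legitimate by \eqref{assumption:g} and Lemma~\ref{lem:g_conditions}); both temporal boundary contributions vanish. Sending $h\downarrow 0$ via Lemma~\ref{lem:mollifier} and then $\sigma\downarrow 0$ by Lebesgue differentiation (using $u\in C^0([0,T];L^{1+m}(\Omega))$) yields the lower bound
\begin{align*}
\tfrac{1}{m+1}\int_\Omega|u(\tau)|^{m+1}\dx - \int_\Omega u(\tau)\cdot\g^m(\tau)\dx + \int_0^\tau\!\int_\Omega u\cdot\partial_t\g^m\dx\dt.
\end{align*}
The right-hand side of \eqref{eq:mollifiedEquation} converges as $h\downarrow 0$ to $\int_\Omega u(0)\cdot\varphi(\cdot,0)\dx$, which equals $0$ since $u(0)=g(0)$ forces $\varphi(\cdot,0)=0$. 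For the divergence part, after $h\downarrow 0$ and $\sigma\downarrow 0$ I apply \eqref{growth} to obtain $\int_0^\tau\!\int\A\cdot(D\u^m-D\g^m)\ge \tfrac{\nu}{2}\int_0^\tau\!\int|D\u^m|^2-\tfrac{L^2}{2\nu}\int_0^\tau\!\int|D\g^m|^2$ via Young's inequality.

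To finish, I absorb the cross terms $\int u(\tau)\cdot\g^m(\tau)\dx$ and $\int_0^\tau\!\!\int u\cdot\partial_t\g^m\dx\dt$ with Young at exponents $(m+1,(m+1)/m)$, noting $|\g^m|^{(m+1)/m}=|g|^{m+1}$. Setting $M(\tau):=\sup_{s\le\tau}\int_\Omega|u(s)|^{m+1}\dx$, the $|u|^{m+1}$ remainder is bounded by $\epsilon(1+T)(m+1)M(\tau)$ plus the data terms, so choosing $\epsilon=\tfrac{1}{4(m+1)(1+T)}$ absorbs it into the left-hand side and produces the estimate for $\sup_\tau\int|u(\tau)|^{m+1}\dx$ and for $\iint|D\u^m|^2\dxt$. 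The $L^2$-bound for $\u^m$ itself is then recovered via Poincaré applied to $\u^m-\g^m\in W^{1,2}_0(\Omega)$, using $\mathrm{diam}(\Omega)$. Tracking the prefactors shows $C_m$ depends only on $\tfrac{1}{m+1}$, $\nu$, $L$, $T$, $\mathrm{diam}(\Omega)$, and the Young constant from exponents $(m+1,(m+1)/m)$, all of which stay bounded for $m\in(m_c,M)$, giving $\sup_{m\in(m_c,M)}C_m<\infty$.

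The main technical obstacle I anticipate is twofold: first, justifying the initial-time boundary contribution rigorously — making sure the mollification convention used in \eqref{eq:mollifiedEquation} matches the computation so that the $t=0$ term indeed evaluates to $\int u(0)\cdot\varphi(0)\dx$ and vanishes by the initial condition $u(0)=g(0)$; second, absorbing the $|u|^{m+1}$ terms uniformly in $m$, which requires choosing $\epsilon$ as a function of $m+1$ and $T$ and keeping all constants bounded as $m$ varies in a compact subinterval of $(m_c,\infty)$.
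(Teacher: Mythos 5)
Your proposal is correct and follows essentially the same route as the paper: test the mollified formulation~\eqref{eq:mollifiedEquation} with $\alpha(\u^m-\g^m)$ for a Lipschitz time cutoff, exploit the monotonicity $(a-b)\cdot(\a^m-\mathbf b^m)\ge 0$ to drop the mixed mollification term, integrate by parts in time, pass to the limits $h\downarrow 0$ then in the cutoff parameter, apply Young to absorb the $g$-cross terms (keeping track that the constants stay bounded for $m$ in compact subsets of $(m_c,\infty)$), and finish with Poincar\'e for the $\|\u^m\|_{L^2}$ term. The only genuine variation is the shape of the time cutoff: the paper's $\alpha_\sigma$ also vanishes on $[0,\sigma]$, which makes the right-hand side of~\eqref{eq:mollifiedEquation} disappear trivially and brings the initial condition~\eqref{initial-boundary} into play when $\sigma\downarrow 0$ (producing an explicit $I(u,g)$ term at $t=0$); you instead take a cutoff equal to $1$ at $t=0$ and argue the right-hand side converges to $\int_\Omega u(0)\cdot\varphi(\cdot,0)\,\dx=0$ via $u(0)=g(0)$, which requires the continuous-in-time representative of $u$ and the time-continuity of $\u^m-\g^m$ into $L^{(m+1)/m}(\Omega)$ -- both available from Remark~\ref{rem:cont_representative_glob} and \eqref{assumption:g}. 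Relatedly, the paper packages the cross terms into $I(u,g)$ using \eqref{eq:boundary_term_ineq}, while you Young directly; this is cosmetic. Your bookkeeping of the absorption constant is slightly loose (the $\epsilon$-Young constant at exponents $(m+1,(m+1)/m)$ scales like $\epsilon^{-1/m}$ rather than $\epsilon^{-1}$), but for $m$ bounded away from $0$ -- which is guaranteed once $m_i\to m>m_c$ -- the uniform bound on $C_m$ still holds, just as in the paper.
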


\begin{remark} \label{rem:caccioppoli_glob}
The proof below is based on the mollified equation~\eqref{eq:mollifiedEquation}. However, when using just the assumptions on $u$ given in Definition~\ref{def:global_weak_solution}, then one should use the original equation~\eqref{def:weak_solution} and test function $\varphi = \alpha (\mollifytime{\u^m}{\bar h} - \mollifytime{\g^m}{\bar h} )$ instead. This will result in the same estimate. Observe that here we need that $\partial_t \mollifytime{\g^m}{\bar h} \to \partial_t \g^m$ in $L^\frac{m+1}{m}(\Omega_T)$ as $h \searrow 0$, for which reason we define the reverse mollifications in this case as
\begin{align*}
\mollifytime{\g^m}{\bar h} (\cdot,t) := e^\frac{t-T}{h}\g^m(\cdot,T) + \frac 1 h \int_t^T e^\frac{t-s}{h} \g^m(\cdot,s)\, \d s
\end{align*}
and
\begin{align*}
\mollifytime{\u^m}{\bar h} (\cdot,t) := e^\frac{t-T}{h}\g^m(\cdot,T) + \frac 1 h \int_t^T e^\frac{t-s}{h} \u^m(\cdot,s)\, \d s.
\end{align*}
%Now the supremum term for $u$ on the left hand side of the lemma should be interpreted as essential supremum in this case.
%
%
% \textcolor{red}{Mark clearly where there is the initial or final time taken in account in the mollifications! Now there is probably term $e^\frac{t-T}{h}\g^m(\cdot,T)$ added to both mollifications}
\end{remark}

\proof For $\sigma >0$ and $\tau \in (0,T]$ define $\alpha=\alpha_\sigma \in W^{1,\infty}([0,T], [0,1])$ with
\begin{align*}
\alpha(t) := \begin{cases}
0 & 0 \leq t < \sigma \\
\tfrac 1 \sigma (t-\sigma) & \sigma \leq t < 2\sigma \\
1 & 2\sigma \leq t < \tau - \sigma \\
\tfrac 1 \sigma (\tau -t) & \tau - \sigma \leq t < \tau \\
0 & \tau \leq t \leq T.
\end{cases}
\end{align*}
Choose $\varphi = \alpha (\u^m-\g^m) \in L^2 \big( 0,T; W_0^{1,2}(\Omega,\mathbb{R}^N)\big)$ as testing function in the mollified equation (\ref{eq:mollifiedEquation}). By using $\partial_t \mollifytime u h = \tfrac 1 h \big( u-\mollifytime u h \big)$ from Lemma \ref{lem:mollifier}, we have
\begin{align*}
\iint_{\Omega_T}  \partial_t & \mollifytime u h \alpha(\g^m-\u^m) \dx \dt\\
	&= \iint_{\Omega_T} \partial_t \mollifytime u h \alpha(\g^m-\mollifytime \u h^m) \dx \dt
		+\iint_{\Omega_T} \partial_t \mollifytime u h \alpha(\mollifytime \u h^m-\u^m)  \dx \dt \\
	&= \iint_{\Omega_T} \partial_t \mollifytime u h \alpha(\g^m-\mollifytime \u h^m) \dx \dt
		- \frac 1 h \iint_{\Omega_T} \alpha (u-\mollifytime u h) (\u^m-\mollifytime \u h^m) \dx \dt.
\end{align*}
Since for all $m>0$ and all $a,b \in \mathbb{R}^N, \; (a-b)(\a^m-\mathbf{b}^m) \geq 0$, the second integral on the right hand side can be estimated by zero. Thus,
\begin{align*}
\iint_{\Omega_T}  \partial_t & \mollifytime u h \alpha(\g^m-\u^m) \dx \dt
	\leq \iint_{\Omega_T} \partial_t \mollifytime u h \alpha(\g^m-\mollifytime \u h^m) \dx \dt.
\end{align*}
One can easily see that $(\partial_t v)\v^m = \frac{1}{m+1} \partial_t|v|^{m+1}$. Using integration by parts with respect to the time variable $t$, we have the identity
\begin{align*}
&\iint_{\Omega_T} (\partial_t \mollifytime u h) \alpha  (\g^m-\mollifytime \u h^m) \dx \dt \\
	&\quad= - \iint_{\Omega_T}\tfrac{1}{m+1}  \alpha  \partial_t |\mollifytime u h|^{m+1} \dx \dt + \iint_{\Omega_T} \alpha  \partial_t \mollifytime u h \g^m \dx \dt \\
	&\quad=\iint_{\Omega_T} \tfrac{1}{m+1}\partial_t \alpha  |\mollifytime u h|^{m+1} \dx \dt 
	- \iint_{\Omega_T} (\partial_t \alpha)  \mollifytime u h \g^m \dx \dt
	- \iint_{\Omega_T} \alpha  \mollifytime u h \partial_t \g^m \dx \dt.
\end{align*}
Since $u \in L^{m+1}({\Omega_T})$, it follows from Lemma \ref{lem:mollifier} that $\mollifytime u h \longrightarrow u$ in $L^{m+1}({\Omega_T})$ as $h \to 0$. Thus, by passing to the limit $h \to 0$ the right hand side equals
\begin{align*}
&\iint_{\Omega_T} \tfrac 1 {m+1} \partial_t  \alpha  |u|^{m+1} \dx \dt
	-\iint_{\Omega_T} \partial_t \alpha u \g^m  \dx \dt
	- \iint_{\Omega_T} \alpha u \partial_t \g^m \dx \dt \\
	= &\iint_{\Omega_T} \partial_t \alpha \Big( \tfrac 1 {m+1}
		\big(
			|u|^{m+1} - |g|^{m+1}
		\big)
		- \g^m(u-g) \Big) \dx \dt \\
	&\quad
		- \iint_{\Omega_T} \alpha u \partial_t \g^m \dx \dt
		- \iint_{\Omega_T} \partial_t \alpha \tfrac m{m+1} |g|^{m+1} \dx \dt \\
	= &\iint_{\Omega_T} \partial_t \alpha \Big(
		\tfrac 1{m+1} \big( |u|^{m+1} - |g|^{m+1} \big) 
		-\g^m(u-g) \Big) \dx \dt
		+\iint_{\Omega_T} \alpha \partial_t \g^m(g-u) \dx \dt,	
\end{align*}
where in the last step $\tfrac m{m+1} \partial_t |g|^{m+1} = (\partial_t \g^m)g$ was used. The first integral on the right hand side contains the boundary term $I(u,g)$. Thus,
\begin{align}\label{eq:EnergyEstimate_LHS}
\begin{aligned}
\lim_{h \downarrow 0} &\iint_{\Omega_T}  \partial_t  \mollifytime u h \alpha(\g^m-\u^m) \d x \d t \\
	& \quad \leq \iint_{\Omega_T} \partial_t \alpha I(u,g) \dx \dt
	+\iint_{\Omega_T} \alpha \partial_t \g^m(g-u) \dx \dt.
\end{aligned}
\end{align}
Since $\alpha(t) = 0$ for $t \in [0,\sigma]$, it follows that the right hand side of~\eqref{eq:mollifiedEquation} vanishes as $h\to 0$. Let us now inspect the divergence term of the mollified equation (\ref{eq:mollifiedEquation}). Since we have $\A(x,t,u,D\u^m) \in L^2(\Omega_T, \mathbb R^{Nn})$, by Lemma \ref{lem:mollifier} it follows that
\begin{align*}
\lim_{h \downarrow 0}
&\iint_{\Omega_T} \alpha \mollifytime  {\A(x,t,u,D\u^m)} h \cdot D  (\u^m-\g^m)  \dx \dt \\
	&= \iint_{\Omega_T} \alpha \A(x,t,u,D\u^m) \cdot D (\u^m-\g^m)\dx \dt.
\end{align*}
For the integral on the right hand side we have
\begin{align*}
 \iint_{\Omega_T}  \alpha &\A(x,t,u,D\u^m) \cdot D (\u^m- \g^m)\dx \dt \\
 	&\geq \nu \iint_{\Omega_T}  \alpha |D\u^m|^2 \dx \dt - \int_{\Omega_T}  \alpha \A(x,t,u,D\u^m) \cdot D\g^m \dx \dt.
\end{align*}
Rearranging the second term of the right hand side, using Cauchy-Schwarz inequality, the structure conditions \eqref{growth} and the estimate \eqref{eq:EnergyEstimate_LHS} yields
\begin{align*}
\nu \iint_{\Omega_T}  \alpha |D\u^m|^2 \dx \dt
	&\leq L \iint_{\Omega_T}  \alpha |D\u^m| |D\g^m| \dx \dt 
		+\iint_{\Omega_T} \alpha \partial_t \g^m(g-u) \dx \dt \\
		& \quad + \iint_{\Omega_T} \partial_t \alpha \, I(u,g) \dx \dt.
\end{align*}
We now pass to the limit $\sigma \downarrow 0$ and write $\Omega_\tau := \Omega \times (0,\tau)$. We recall that $u,g \in C^0 \big([0,T],L^{m+1}(\Omega,\mathbb R^N) \big)$ and use the initial boundary condition \eqref{initial-boundary} to get
\begin{align*}
\nu \iint_{\Omega_\tau} |D\u^m|^2 \dx \dt
	&\leq L \iint_{\Omega_\tau} |D\u^m| |D\g^m| \dx \dt 
		+\iint_{\Omega_\tau} \partial_t \g^m(g-u) \dx \dt \\
		& \quad - \int_{\Omega \times \lbrace \tau \rbrace}  I(u,g) \dx
\end{align*}
for any $\tau \in (0,T]$. For the first two terms on the right hand side we use Young's $\varepsilon$-inequality and obtain
\begin{align*}
\int_{\Omega \times \lbrace \tau \rbrace} & I(u,g) \dx
	+ \nu \iint_{\Omega_\tau} |D\u^m|^2 \dx\dt \\
	&\leq \frac{L \varepsilon_1}{2} \iint_{\Omega_\tau} |D\u^m|^2 \dx \dt
		+ \frac{L}{2 \varepsilon_1} \iint_{\Omega_\tau} |D\g^m|^2 \dx \dt \\
	&\quad+ \frac{m}{(m+1)\varepsilon_2} \iint_{\Omega_\tau} |\partial_t \g^m|^{\frac{m+1}m} \dx \dt
		+ \frac{2^m \varepsilon_2}{m+1} \iint_{\Omega_\tau} \big( |g|^{m+1} + |u|^{m+1} \big) \dx\dt.
\end{align*}
We choose $\varepsilon_1>0$ small so that we can absorb the first term on the right hand side into the left hand side. This way, we get
\begin{align*}
\int_{\Omega \times \lbrace \tau \rbrace} &I(u,g) \dx
	+ \iint_{\Omega_\tau} |D\u^m|^2 \dx \dt \\
	&\leq C \iint_{\Omega_T} |D\g^m|^2 \dx \dt 
		+ C \iint_{\Omega_T} |\partial_t \g^m|^{\frac{m+1}m} \dx \dt \\
	&\quad+ C \varepsilon_2 \iint_{\Omega_T} \big( |g|^{m+1} + |u|^{m+1} \big) \dx\dt,
\end{align*}
where $C=C(m,\nu,L)$. Since the right hand side is now independent of $\tau$, we choose $\tau=T$ in the second term on the left hand side. We also take the supremum over
% integrate over
 $\tau \in [0,T]$ and use the estimates
\begin{align*}
C(m) \big(|u|^{m+1} - |g|^{m+1} \big)
	\leq I(u,g)
	\leq C(m) \big( |u|^{m+1} + |g|^{m+1} \big),
\end{align*}
%\begin{align*}
%\tfrac{1}{2(m+1)}|u|^{m+1} - \tfrac{m}{m+1} |g|^{m+1}
%	\leq I(u,g)
%	\leq |u|^{m+1} + |g|^{m+1},
%\end{align*}
which can be derived from Young's inequality. It follows that
\begin{align*}
\sup_\tau &\int_{\Omega \times \lbrace \tau \rbrace} |u|^{m+1} \dx 
	+ \iint_{\Omega_T} |D\u^m|^2 \dx \dt \\
	&\leq C \iint_{\Omega_T} |D\g^m|^2 \dx \dt 
		+ C \iint_{\Omega_T} |\partial_t \g^m|^{\frac{m+1}m} \dx \dt \\
	&\quad+ C \sup_\tau \int_{\Omega \times \lbrace \tau \rbrace}  |g|^{m+1} \dx 
	+ C \iint_{\Omega_T} |g|^{m+1} \dx \dt
	+ C \varepsilon_2 \sup_\tau \int_{\Omega \times \lbrace \tau \rbrace} |u|^{m+1}  \dx.
\end{align*}
Choosing $\varepsilon_2$ small enough, we can absorb another term into the left hand side and get
\begin{align*}
\sup_\tau &\int_{\Omega \times \lbrace \tau \rbrace} |u|^{m+1} \dx 
	+ \iint_{\Omega_T} |D\u^m|^2 \dx \dt \\
	&\leq C \sup_\tau \int_{\Omega \times \lbrace \tau \rbrace}  |g|^{m+1} \dx 
	+ C \iint_{\Omega_T} \big( |g|^{m+1} + |D\g^m|^2
		+|\partial_t \g^m|^{\frac{m+1}m} \big) \dx \dt.
\end{align*}
We have $C=C(m,T,\nu,L)$ for now. For the term containing $|\u^m|^2$ use Poincar\'e's inequality:
\begin{align*}
\iint_{\Omega_T} |\u^m|^2 \dx \dt
	&\leq 2\iint_{\Omega_T} |\u^m-\g^m|^2 \dx \dt
		+ 2\iint_{\Omega_T} |\g^m|^2 \dx \dt \\
	&\leq  2\frac {\mathrm{diam}\,(\Omega)^2}2 \iint_{\Omega_T} |D\u^m-D\g^m|^2 \dx \dt
		+ 2\iint_{\Omega_T} |\g^m|^2 \dx \dt \\
	&\leq C \iint_{\Omega_T} |D\u^m|^2 \dx \dt
		+ C \iint_{\Omega_T} |D\g^m|^2 \dx \dt
		+ 2 \iint_{\Omega_T} |\g^m|^2 \dx \dt,
\end{align*}
where $C=C(\mathrm{diam}\,\Omega)$. This yields the desired estimate with $C_m=C_{({m},T,\nu,L,\mathrm{diam}(\Omega))} >0$ and
\begin{align*}
\sup_{m \in (m_c,M)} C_m < \infty
\end{align*}
for any given $M>m_c$. \qed

\begin{corollary}
Let $u_i$ be a weak solution to Equation %\eqref{por-med-eq}
\eqref{equation:global-PME} with exponent $m_i$ and with initial and boundary values $g$ satisfying \eqref{assumption:g}, in the sense of Definition \ref{def:global_weak_solution},
%and satisfying \eqref{lateral-boundary} and \eqref{initial-boundary}
where the vector field $\mathbf A$ satisfies the growth conditions~\eqref{growth}.
%Let $u_i$ be a weak solution to Equation~\eqref{por-med-eq} to the exponent $m_i$ in the sense of Definition \ref{def:global_weak_solution} and satisfying \eqref{lateral-boundary} and \eqref{initial-boundary} with the respective exponent $m_i$. Let the vector field $\mathbf A$ satisfy the growth conditions~\eqref{growth}.
Then we can bound
\begin{align*}
\sup_{i \in \mathbb{N}} \bigg(
	\sup_\tau \int_{\Omega \times \lbrace \tau \rbrace}  |u_i|^{m_i+1} \dx 
	+ \iint_{\Omega_T} |\u_i^{m_i}|^2 + |D\u_i^{m_i}|^2 \dx \dt
\bigg)
	< \infty.
\end{align*}
\end{corollary}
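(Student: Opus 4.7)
The plan is to simply assemble the two pieces that have just been established: the energy estimate of Lemma~\ref{lem:caccioppoli_global} applied to each $u_i$, and the uniform bounds on the boundary datum $g$ collected in Lemma~\ref{lem:g_conditions}.

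First, for every fixed $i$, since $u_i$ is a weak solution to \eqref{equation:global-PME} with exponent $m_i$ and boundary datum $g$ (and the hypothesis~\eqref{assumption:g} on $g$ persists with $m$ replaced by $m_i$ thanks to Lemma~\ref{lem:g_conditions}), the energy estimate of Lemma~\ref{lem:caccioppoli_global} applies with $m$ replaced by $m_i$ and yields
\begin{align*}
\sup_\tau \int_{\Omega \times \lbrace \tau \rbrace} |u_i|^{m_i+1} \dx
	&+ \iint_{\Omega_T} \bigl( |\u_i^{m_i}|^2 + |D\u_i^{m_i}|^2 \bigr) \dx \dt \\
	\leq C_{m_i} \bigg(
		\sup_\tau &\int_{\Omega \times \lbrace \tau \rbrace} |g|^{m_i+1}\dx
		+ \iint_{\Omega_T} \bigl( |\g^{m_i}|^2 + |D\g^{m_i}|^2 + |\partial_t \g^{m_i}|^{\frac{m_i+1}{m_i}} \bigr) \dx \dt
	\bigg).
\end{align*}

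Next, since $m_i \to m \in (m_c,\infty)$, the sequence $(m_i)$ is eventually contained in some interval $(m_c, M)$ with $M > m$; after enlarging $M$ if necessary we can arrange this for all $i\in\N$. The last statement of Lemma~\ref{lem:caccioppoli_global} then gives
\begin{align*}
\sup_{i\in\N} C_{m_i} \leq \sup_{m \in (m_c,M)} C_m < \infty.
\end{align*}

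Finally, each of the four quantities built from $g$ that appear on the right-hand side is controlled uniformly in $i$ by Lemma~\ref{lem:g_conditions}, which provides precisely the bounds
\begin{align*}
\sup_{i \in \N} \sup_\tau \int_{\Omega \times \lbrace \tau \rbrace} |g|^{m_i+1} \dx < \infty
\quad\text{and}\quad
\sup_{i \in \N} \iint_{\Omega_T} \bigl( |\g^{m_i}|^2 + |D\g^{m_i}|^2 + |\partial_t \g^{m_i}|^{\frac{m_i+1}{m_i}} \bigr) \dx \dt < \infty.
\end{align*}
Taking the supremum over $i$ in the estimate from the first step and inserting the two uniform bounds above yields the desired conclusion. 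There is no real obstacle here; the only point that demands a moment of care is to verify that Lemma~\ref{lem:caccioppoli_global} can in fact be invoked uniformly, but the preparatory Lemma~\ref{lem:g_conditions} was set up exactly for this.
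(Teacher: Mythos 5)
Your proof is correct and is exactly the argument the paper intends (the corollary is stated without proof, being an immediate consequence of Lemma~\ref{lem:caccioppoli_global} applied with exponent $m_i$, the uniform bound $\sup_{m\in(m_c,M)}C_m<\infty$, and the uniform-in-$i$ control of all boundary terms supplied by Lemma~\ref{lem:g_conditions}). The one point you flag, that $(m_i)$ is eventually contained in some $(m_c,M)$, is indeed needed and handled correctly since $m_i\to m<\infty$.
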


\begin{remark}
In the degenerate case, one can adapt the proof slightly, which in turn admits weaker assumptions for the initial and boundary datum $g$. More specifically, one can use Young's inequality with the exponent $2m$ instead of $1+m$ for the expression $\partial_t \g^m (g-u)$. By further absorbing the $|\u^m|^{2}$ term to the left hand side after the usage of the Poincar\'e-inequality, one has proven the following variation of the previous Lemma \ref{lem:caccioppoli_global}:
\end{remark}

\begin{lemma}
Let $u$ be a weak solution to Equation %\eqref{por-med-eq}
\eqref{equation:global-PME} with exponent $m$ and with initial and boundary values $g$ satisfying \eqref{assumption:g}, in the sense of Definition \ref{def:global_weak_solution},
%and satisfying \eqref{lateral-boundary} and \eqref{initial-boundary}
where the vector field $\mathbf A$ satisfies the growth conditions~\eqref{growth}.
%Let $u$ be a weak solution to Equation~\eqref{por-med-eq} in the sense of Definition \ref{def:global_weak_solution} and satisfying \eqref{lateral-boundary} and \eqref{initial-boundary}, where the vector field $\mathbf A$ satisfies growth conditions~\eqref{growth}.
For $m\geq 1$, we then have the estimate
%\begin{align*}
%\iint_{\Omega_T} &|\u^ {m}|^2 \dx \dt
%	+ \iint_{{\Omega_T}} |D\u^{m}|^2 \dx \dt \\
%	&\leq C \iint_{\Omega_T} \big( |\g^m|^2 + |D\g^{m}|^2 + |\partial_t \g^{m}|^{\frac {2m}{2m-1}} \big) \dx \dt
%\end{align*}
\begin{align*}
\sup_\tau &\int_{\Omega \times \lbrace \tau \rbrace}  |u|^{m+1} \dx 
+\iint_{\Omega_T} \big(  |\u^ {m}|^2 + |D\u^{m}|^2  \big) \dx \dt \\
	&\; \leq 
		C_m \sup_\tau \int_{\Omega \times \lbrace \tau \rbrace}  |g|^{m+1} \dx 
		+C_m \iint_{\Omega_T} \big( |\g^m|^2 + |D\g^{m}|^2 + |\partial_t \g^{m}|^{\frac {2m}{2m-1}} \big) \dx \dt
\end{align*}
for a constant $C_m= C_m{(T,\nu,L,\mathrm{diam}(\Omega))} >0$ with
\begin{align*}
\sup_{m \in (m_c,M)} C_m < \infty
\end{align*}
for any given $M>m_c$.
\end{lemma}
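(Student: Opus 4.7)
The plan is to follow the argument of Lemma \ref{lem:caccioppoli_global} verbatim, deviating only at a single application of Young's inequality. First I would test the mollified equation against $\alpha(\u^m-\g^m)$, pass to the limit $h\to 0$ using the monotonicity $(a-b)\cdot(\a^m-\mathbf{b}^m)\geq 0$, integrate by parts in $t$, and invoke the initial trace \eqref{initial-boundary}. After the $\sigma\to 0$-limit this yields, with $\Omega_\tau:=\Omega\times(0,\tau)$,
\begin{align*}
\int_{\Omega\times\{\tau\}} I(u,g)\,\dx + \nu\iint_{\Omega_\tau}|D\u^m|^2\,\dx\,\dt
&\leq L\iint_{\Omega_\tau}|D\u^m|\,|D\g^m|\,\dx\,\dt \\
&\quad + \iint_{\Omega_\tau}\partial_t\g^m\cdot(g-u)\,\dx\,\dt,
\end{align*}
exactly as in the preceding lemma. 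None of these steps uses the restriction $m\geq 1$, so they transfer unchanged.

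The new ingredient, suggested in the remark preceding the statement, is to estimate the last term via the conjugate pair $(2m,\tfrac{2m}{2m-1})$ instead of $(m+1,\tfrac{m+1}{m})$. This is admissible precisely because $m\geq 1$, and the resulting contribution $|u|^{2m}=|\u^m|^2$ is integrable thanks to $\u^m\in L^2(\Omega_T,\R^N)$ from Definition \ref{def:global_weak_solution}. An $\varepsilon$-Young inequality together with the elementary bound $|g-u|^{2m}\leq c(m)(|\g^m|^2+|\u^m|^2)$ gives
\begin{align*}
\iint_{\Omega_\tau}\partial_t\g^m\cdot(g-u)\,\dx\,\dt
&\leq c\,\varepsilon\iint_{\Omega_\tau}(|\g^m|^2+|\u^m|^2)\,\dx\,\dt \\
&\quad + c_\varepsilon\iint_{\Omega_T}|\partial_t\g^m|^{\frac{2m}{2m-1}}\,\dx\,\dt,
\end{align*}
while a standard Young estimate on $|D\u^m|\,|D\g^m|$ takes care of the mixed term. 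The only remaining $u$-dependent quantity not yet on the left is the new $|\u^m|^2$.

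To absorb it I would use Poincar\'e's inequality applied to $\u^m-\g^m$, which by the lateral boundary condition \eqref{lateral-boundary} lies in $W^{1,2}_0(\Omega,\R^N)$ for a.e.\ $t$:
\begin{align*}
\iint_{\Omega_\tau}|\u^m|^2\,\dx\,\dt
&\leq c\,\diam(\Omega)^2\iint_{\Omega_\tau}|D\u^m|^2\,\dx\,\dt \\
&\quad + c\iint_{\Omega_\tau}(|D\g^m|^2+|\g^m|^2)\,\dx\,\dt.
\end{align*}
After choosing $\varepsilon$ small enough, both $|D\u^m|^2$ and $|\u^m|^2$ can be absorbed into the left-hand side. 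Taking the supremum over $\tau\in(0,T]$ and invoking $c(m)^{-1}(|u|^{m+1}-|g|^{m+1})\leq I(u,g)$ from \eqref{eq:boundary_term_ineq} then recovers the $\sup_\tau\int_\Omega|u|^{m+1}\,\dx$-term with $\sup_\tau\int_\Omega|g|^{m+1}\,\dx$ on the right. Constant tracking is identical to Lemma \ref{lem:caccioppoli_global}: the only $m$-dependence enters through $1/(m+1)$ in the parabolic identity and through the Young constants with exponent $2m/(2m-1)$, both of which stay uniformly bounded for $m\in[1,M)$. The main obstacle is really just identifying the right Young pair; once that choice is made, the Poincar\'e absorption works cleanly because $m\geq 1$ turns $|u|^{2m}$ into $|\u^m|^2$, a quantity already under spatial $L^2$-control.
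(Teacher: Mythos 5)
Your proposal is correct and follows essentially the same route the paper sketches in the Remark preceding this lemma: replace the conjugate pair $(m+1,\tfrac{m+1}{m})$ with $(2m,\tfrac{2m}{2m-1})$ in the Young estimate for $\iint \partial_t\g^m\cdot(g-u)$, rewrite $|u|^{2m}=|\u^m|^2$, and then absorb the resulting $\iint|\u^m|^2$-term via Poincar\'e (using \eqref{lateral-boundary}) into the $\iint|D\u^m|^2$-term on the left-hand side, with the remaining steps identical to Lemma \ref{lem:caccioppoli_global}. One small remark: you could be slightly more explicit that after absorbing the $\varepsilon$-terms and passing to $\sup_\tau$, the $\iint|\u^m|^2$-term claimed on the left must still be reinstated by one final application of Poincar\'e (exactly as at the end of the proof of Lemma \ref{lem:caccioppoli_global}); and the role of $m\geq 1$ is not that the Young pair is admissible (that already holds for $m>\tfrac12$) but that the new exponent $\tfrac{2m}{2m-1}\leq\tfrac{m+1}{m}$ genuinely weakens the requirement on $\partial_t\g^m$ only in that regime.
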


\subsection{Strong convergences}
\hfill \\[1ex]
These energy estimates imply weak convergence and also allow us to apply the local result to achieve local convergence. These can immediately be transferred to get stronger convergence for a lower exponent. Further, by using Gagliardo-Nirenberg inequality, a stronger convergence for the solutions themselves can be attained. We collect this information in the following Lemma.

\begin{lemma}\label{lem:global_convergences_immediate}
In the setting of Theorem \ref{theorem:global}
%Let $u_i$ be a global weak solution to equation~\eqref{por-med-eq} to the exponent $m_i$ in the sense of Definition \ref{def:global_weak_solution} and satisfying \eqref{lateral-boundary} and \eqref{initial-boundary} with the respective exponent $m_i$. Let the vector field $\mathbf A$ satisfy the growth conditions~\eqref{growth}. \\
there is a subsequence, still denoted by $(u_i)$, and a measurable function $u: \Omega_T \rightarrow \mathbb{R}^N$ with $\u^m \in L^2 \big( 0,T; W^{1,2}(\Omega,\mathbb{R}^N ) \big)$ such that
\begin{align*}
\u_i^{m_i} &\rightharpoonup \u^m \quad \text{ weakly in } L^2 \big( 0,T; W^{1,2}(\Omega,\mathbb{R}^N ) \big), \\
\u_i^{m_i} &\rightarrow \u^m \quad \text{ strongly in } L^2_{\loc} \big( 0,T; W_{\loc}^{1,2}(\Omega,\mathbb{R}^N ) \big), \\
\u_i^{m_i} &\rightarrow \u^m \quad \text{ strongly in } L^q \big( 0,T; W^{1,q}(\Omega,\mathbb{R}^N ) \big),
\end{align*}
as $i \longrightarrow \infty$ and for any $q<2$. Further,
\begin{align*}
\u_i^{m_i} &\rightarrow \u^m \quad \text{ strongly in } L^2(\Omega_T,\mathbb R^N), \\
\u_i^{1+m_i} &\rightarrow \u^{1+m} \quad \text{ strongly in } L^{1+\varepsilon}(\Omega_T,\mathbb R^N)
\end{align*}
as $i \longrightarrow \infty$ and for some $\varepsilon >0$.
\end{lemma}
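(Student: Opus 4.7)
The plan is to combine the global energy estimate with Theorem~\ref{theorem} and a measure-theoretic higher-integrability argument to produce all four convergences at once.

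First I would use the corollary following Lemma~\ref{lem:caccioppoli_global} together with Lemma~\ref{lem:g_conditions} to obtain the uniform bounds $\sup_i \norm{\u_i^{m_i}}_{L^2(0,T;W^{1,2}(\Omega,\R^N))} < \infty$ and $\sup_i \norm{u_i}_{L^\infty(0,T;L^{1+m_i}(\Omega,\R^N))} < \infty$. Reflexivity of $L^2(0,T;W^{1,2})$ then delivers a weakly convergent subsequence $\u_i^{m_i} \rightharpoonup V$ for some $V \in L^2(0,T;W^{1,2}(\Omega,\R^N))$. To identify the limit, I would repeat the Simon compactness step from the proof of Lemma~\ref{lem:strong-lp-convergence} on a compact exhaustion of $\Omega_T$---the equicontinuity estimate there is purely local and uses only the mollified equation and the energy bound, both of which are available here---and, via a diagonal extraction, obtain pointwise a.e.\ convergence $\u_i^{m_i} \to V$ on $\Omega_T$. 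Since the map $y \mapsto |y|^{m-1}y$ is a homeomorphism of $\R^N$ depending continuously on $m$, this forces a.e.\ convergence $u_i \to u$ where $u$ is defined by $\u^m := V$, so in particular $V = \u^m$.

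With $u$ in hand the global weak solutions $u_i$ are also local weak solutions satisfying the hypotheses of Theorem~\ref{theorem}: assumption~\eqref{assumption:weak-convergence} is the weak convergence established above restricted to compact subcylinders, and~\eqref{assumption:boundedness} follows from the uniform $L^\infty$--$L^{1+m_i}$ bound. Theorem~\ref{theorem} therefore yields the local $L^2(W^{1,2})$ convergence in the second assertion, and after passing to a further subsequence also $D\u_i^{m_i} \to D\u^m$ pointwise a.e. For the global $L^q(W^{1,q})$ convergence with $q<2$, I would then combine this a.e.\ convergence with the uniform $L^2(\Omega_T)$ bound and apply exactly the $\delta$-level-set splitting used at the end of the proof of Lemma~\ref{lem:strong-lp-convergence}.

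Finally, for the strong convergences $\u_i^{m_i} \to \u^m$ in $L^2(\Omega_T,\R^N)$ and $\u_i^{1+m_i} \to \u^{1+m}$ in $L^{1+\varepsilon}(\Omega_T,\R^N)$, the key step will be to extract uniform higher integrability via Lemma~\ref{lem:global-parabolic-sobolev} applied to $w_i := \u_i^{m_i} - \g^{m_i} \in L^2(0,T;W_0^{1,2}(\Omega,\R^N))$, which is uniformly bounded in $L^\infty(0,T;L^{(1+m_i)/m_i}(\Omega,\R^N))$ by the preceding paragraph and Lemma~\ref{lem:g_conditions}. This produces uniform boundedness of $\u_i^{m_i}$ in $L^{\ell_i}(\Omega_T)$ with $\ell_i = 2(nm_i+1+m_i)/(nm_i)$, and hence of $u_i$ in $L^{m_i\ell_i}(\Omega_T)$. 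The hard part will be verifying that both $\ell_i > 2$ and $m_i\ell_i > 1+m_i$ remain strict and uniform as $i \to \infty$; a short algebraic manipulation reduces the second inequality to $m_i(n+2) > n-2$, i.e. $m_i > m_c$, which is precisely the standing assumption. Once this uniform higher integrability is secured, Vitali's convergence theorem combined with the a.e.\ convergence of $\u_i^{m_i}$ and $u_i$ from the first paragraph yields the remaining two strong convergences.
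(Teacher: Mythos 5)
Your proposal follows the paper's proof in all essentials: weak convergence from the global energy estimate (Lemma~\ref{lem:caccioppoli_global} plus Lemma~\ref{lem:g_conditions}), Theorem~\ref{theorem} for the local strong convergence and a.e.\ limit, the level-set/Vitali argument to globalise in $L^q$ for $q<2$, and the global Gagliardo--Nirenberg inequality of Lemma~\ref{lem:global-parabolic-sobolev} combined with $m_i > m_c$ for the last two assertions. The one place to be more careful is the step where you pass from the Gagliardo--Nirenberg bound on $w_i = \u_i^{m_i} - \g^{m_i}$ in $L^{\ell_i}(\Omega_T)$ to a bound on $\u_i^{m_i}$ itself: $\g^{m_i}$ is not a priori uniformly bounded in $L^{\ell_i}(\Omega_T)$, so one has to cap the exponent at $q_i := \min\{\gamma/m_i,\, \ell_i\}$ and invoke the assumption $\gamma > 1+m$ from~\eqref{assumption:g}, exactly as the paper does; since both $\gamma/m_i$ and $\ell_i$ exceed $(1+m_i)/m_i$ by a uniform margin when $i$ is large, this still gives $u_i$ uniformly bounded in $L^{(1+m_i)(1+\varepsilon)}(\Omega_T)$ for some fixed $\varepsilon>0$, and your Vitali argument then goes through unchanged. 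Incidentally, the extra Simon compactness step you interpose to identify the weak limit is redundant: because $y \mapsto \boldsymbol{y}^m$ is a homeomorphism of $\R^N$, you may simply define $u$ so that $\u^m$ equals the weak limit in $L^2(0,T;W^{1,2}(\Omega,\R^N))$, and Theorem~\ref{theorem} performs the compactness work internally.
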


\begin{proof}
In identical fashion to Corollary \ref{kor:weak-convergence}, by the previous uniform energy estimates in Lemma \ref{lem:caccioppoli_global} and the assumptions for $g$ in \eqref{assumption:g} we find a subsequence $(u_i)$ such that $\u_i^{m_i}$ converges weakly to some $\u^m \in L^2 \big( 0,T; W^{1,2}(\Omega,\mathbb{R}^N ) \big)$. By further applying the local result in Theorem \ref{theorem}, we find that, as $i \rightarrow \infty$, $\u_i^{m_i} \rightarrow \u^m$ in the strong sense in $L^2_{\loc} \big( 0,T; W_{\loc}^{1,2}(\Omega,\mathbb{R}^N ) \big)$.

The extension from the local convergence in $L^2_{\loc} \big( 0,T; W_{\loc}^{1,2}(\Omega,\mathbb{R}^N ) \big)$ to convergence in $L^q \big( 0,T; W^{1,q}(\Omega,\mathbb{R}^N ) \big)$ follows by the application of the measure theoretic argument as in the end of the proof for Lemma \ref{lem:strong-lp-convergence}.

For the functions $\u_i^{m_i}$ we apply the Gagliardo-Nirenberg inequality \ref{lem:global-parabolic-sobolev}, once again using that $m_i > m_c$ as well as $m> m_c$ to see that $\u_i^{m_i} \in L^{2+\varepsilon} (\Omega_T, \mathbb R^N)$, for the whole range $m_c < m_i< \infty$. Yet again, we apply the measure theoretic argument to obtain strong convergence $\u_i^{m_i} \rightarrow \u^m$ in $L^2(\Omega_T,\mathbb R^N)$ as $i \rightarrow \infty$.

We turn our attention to the convergence of $u_i$. Define $G_i := 2(1+\frac{1+m_i}{m_in})$ and $G:=  2(1+\frac{1+m}{mn})$ as the exponents appearing in the Gagliardo-Nirenberg inequality from Lemma \ref{lem:global-parabolic-sobolev}. \\
Since $\gamma >1+m$, $G > (1+m)m^{-1}$ and $m_i \rightarrow m$ as $i \rightarrow \infty$ one can find $\varepsilon>0$ such that $\gamma m_i^{-1} > (1+m_i)m_i^{-1}(1+\varepsilon)$ and $G_i > (1+m_i)m_i^{-1}(1+\varepsilon)$ for large $i$. 
%{\color{gray} longer:} As $\gamma >1+m$, there is $\varepsilon>0$ such that $\gamma > (1+m_i)(1+\varepsilon)$ for large $i$. As $G> \frac{1+m}m$, there is $\varepsilon>0$ such that $G> (1+m)m^{-1}(1 +2\varepsilon)$ and $\gamma m_i^{-1} > (1+m_i)m_i^{-1}(1+\varepsilon)$. It further follows that $G_i > (1+m_i)m_i^{-1}(1+\varepsilon)$ for large $i$. \\
%If $G_i > \gamma m_i^{-1}$, start by estimating
%\begin{align*}
%\iint_{\Omega_T} |u|^{(1+m_i)(1+\varepsilon)} \d x \d t
%	&\leq |\Omega_T| + \iint_{\Omega_T} |u|^\gamma \d x \d t
%	= c+\iint_{\Omega_T} |\u^{m_i}|^{\frac{\gamma}{{m_i}}} \d x \d t \\
%	&\leq c+c \iint_{\Omega_T} |\u^{m_i}-\g^{m_i}|^{\frac{\gamma}{{m_i}}} \d x \d t
%		+ c\iint_{\Omega_T} |g|^{\gamma} \d x \d t \\
%	&\leq c+c \iint_{\Omega_T} |\u^{m_i}-\g^{m_i}|^{G_i} \d x \d t
%		+ c\iint_{\Omega_T} |g|^{\gamma} \d x \d t
%\end{align*}
%If $G_i \leq \gamma m_i^{-1}$,
%\begin{align*}
%\iint_{\Omega_T} |u|^{(1+m_i)(1+\varepsilon)} \d x \d t
%	&= \iint_{\Omega_T} |\u^{m_i}|^{\frac{1+m_i}{m_i}(1+\varepsilon)} \d x \d t
%	\leq  |\Omega_T| + \iint_{\Omega_T} |\u^{m_i}|^{G_i} \d x \d t \\
%	&\leq c+ c \iint_{\Omega_T} |\u^{m_i}-\g^{m_i}|^{G_i} \d x \d t
%		+ c\iint_{\Omega_T} |\g^{m_i}|^{G_i} \d x \d t \\
%	&\leq c+ c \iint_{\Omega_T} |\u^{m_i}-\g^{m_i}|^{G_i} \d x \d t
%		+ c\iint_{\Omega_T} |g|^{\gamma} \d x \d t
%\end{align*}
%(idea: 1 estimate: estimate $\frac{1+m_i}{m_i}(1+\varepsilon)$ by $\min\lbrace \frac \gamma {m_i}, G_i \rbrace$, like: 
Define $q_i:= \min\lbrace \frac \gamma {m_i}, G_i \rbrace$. Then
\begin{align*}
\iint_{\Omega_T} |u_i|^{(1+m_i)(1+\varepsilon)} \d x \d t
	&= \iint_{\Omega_T} |\u_i^{m_i}|^{\frac{1+m_i}{m_i}(1+\varepsilon)} \d x \d t \\
	&\leq  c|\Omega_T| + c\iint_{\Omega_T} |\u_i^{m_i}|^{q_i} \d x \d t \\
	&\leq c+ c \iint_{\Omega_T} |\u_i^{m_i}-\g^{m_i}|^{G_i} \d x \d t
		+ c\iint_{\Omega_T} |g|^{\gamma} \d x \d t
\end{align*}
The second integral is finite by the assumptions for $g$ in \eqref{assumption:g}. For the first term, we use the global version of Gagliardo-Nirenberg to obtain
\begin{align*}
\iint_{\Omega_T} &|\u_i^{m_i}-\g^{m_i}|^{G_i} \d x \d t \\
	&\leq \bigg( \iint_{\Omega_T} |D\u_i^{m_i} - D\g^{m_i}|^2 \d x \d t \bigg)
		\bigg( \sup_{t \in (0,T)} \int_\Omega |\u_i^{m_i}-\g^{m_i}|^{\frac{1+m_i}{m_i}} \d x \bigg)^{\frac 2 n}
\end{align*}
By using the uniform bounds from Lemma \ref{lem:caccioppoli_global} and once again the assumptions for $g$ in \eqref{assumption:g}, the right hand side is uniformly bounded. We can conclude that $\u^{1+m_i}$ forms a bounded sequence in $L^{1+\varepsilon}(\Omega_T, \mathbb R^N)$. By applying the measure theoretic argument (as in the proof for Lemma \ref{lem:strong-lp-convergence}), we can see that $\u_i^{1+m_i}$ converges to $\u^{1+m}$ strongly in the space $L^{1+\varepsilon}(\Omega_T,\mathbb R^N)$ for some $\varepsilon>0$.
\end{proof}

\begin{lemma} \label{lem:global-limitproblem}
The limit function $u$ is a global weak solution to Equation~\eqref{equation:global-PME} attaining the corresponding boundary values $g$ in the sense of Definition~\ref{def:global_weak_solution}. 
\end{lemma}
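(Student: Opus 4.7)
The plan is to verify the three ingredients of Definition~\ref{def:global_weak_solution} for the limit $u$ in sequence: the weak PDE~\eqref{weak-solution}, the lateral condition~\eqref{lateral-boundary}, and the initial condition~\eqref{initial-boundary}. The first two are comparatively quick, while the third will be the main obstacle.

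For the PDE, I will invoke Theorem~\ref{theorem}: the convergences collected in Lemma~\ref{lem:global_convergences_immediate} verify the hypotheses~\eqref{assumption:weak-convergence} and~\eqref{assumption:boundedness} on every subcylinder compactly contained in $\Omega_T$, so $u$ is a weak solution in the sense of Definition~\ref{def:weak_solution}. The lateral condition will follow from weak closedness: each $\u_i^{m_i} - \g^{m_i}$ lies in $L^2(0,T; W^{1,2}_0(\Omega,\R^N))$, and passing to the weak limit via Lemma~\ref{lem:global_convergences_immediate} together with the strong convergence $\g^{m_i} \to \g^m$ from Lemma~\ref{lem:g_conditions} places the limit $\u^m - \g^m$ in that same (weakly closed) space, yielding~\eqref{lateral-boundary}.

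Now to the initial condition. Once the PDE and lateral condition are in place, the global continuity lemma of Section~4.2 applies to $u$ and provides a representative $u \in C^0([0,T]; L^{m+1}(\Omega,\R^N))$; by the remark following Definition~\ref{def:global_weak_solution}, the integral condition~\eqref{initial-boundary} then reduces to showing $u(\cdot,0) = g(\cdot,0)$ a.e.\ in $\Omega$. To identify the trace at $t=0$, I would take a test function $\varphi \in L^2(0,T; W^{1,2}_0(\Omega,\R^N))$ smooth in time with $\varphi(\cdot,T) = 0$ (and with $\partial_t\varphi \in L^{(m+1)/m}(\Omega_T,\R^N)$ in the singular range). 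For each $i$, I plug $\varphi$ into the mollified formulation~\eqref{eq:mollifiedEquation}, integrate by parts in time (the boundary terms vanish because $\mollifytime{u_i}{h}(\cdot,0) = 0$ and $\varphi(\cdot,T) = 0$), let $h\downarrow 0$, and use $u_i(\cdot,0) = g(\cdot,0)$ to obtain
\begin{equation*}
\iint_{\Omega_T}\!\big[-u_i\cdot\partial_t\varphi + \mathbf A(x,t,u_i,D\u_i^{m_i})\cdot D\varphi\big]\dx\dt = \int_\Omega g(\cdot,0)\cdot\varphi(\cdot,0)\dx.
\end{equation*}

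Passing $i\to\infty$ requires strong convergences of both factors. The pointwise a.e.\ convergence $u_i\to u$ (coming from $\u_i^{m_i}\to\u^m$ a.e.) combined with boundedness of $u_i$ in $L^{(1+m)(1+\varepsilon)}(\Omega_T)$ (extracted from the Gagliardo-Nirenberg argument in the proof of Lemma~\ref{lem:global_convergences_immediate}) yields $u_i \to u$ strongly in $L^{1+m}(\Omega_T)$ via Vitali's theorem; this handles the parabolic term. Similarly, a.e.\ convergence $(u_i,D\u_i^{m_i})\to(u,D\u^m)$, continuity of $\mathbf A$ in the last two arguments, the uniform $L^2$-bound on $D\u_i^{m_i}$, and the measure-theoretic argument of Lemma~\ref{lem:strong-lp-convergence} yield $\mathbf A(x,t,u_i,D\u_i^{m_i}) \to \mathbf A(x,t,u,D\u^m)$ strongly in $L^q_{\loc}(\Omega_T,\R^{Nn})$ for every $q<2$; this handles the divergence term. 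Hence the identity transfers to $u$, still with $g(\cdot,0)$ on the right. On the other hand, the continuous $L^{m+1}$-representative of $u$ together with the already established~\eqref{lateral-boundary} allows letting $\tau_1\downarrow 0$ in the local mollified identity~\eqref{eq:mollifiedEquation_loc} for $u$ (in its enlarged test-function class, admissible for functions vanishing only on $\partial\Omega\times[0,T]$), producing the analogous identity but with $u(\cdot,0)$ in place of $g(\cdot,0)$. Subtracting and invoking density of $W^{1,2}_0(\Omega,\R^N)$ in $L^{(m+1)/m}(\Omega,\R^N)$ (valid since $m>m_c$) then forces $u(\cdot,0) = g(\cdot,0)$ a.e.\ in $\Omega$. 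The delicate point is justifying the mollified identity for $u$ with its correct initial value on the right-hand side, which relies on the continuous representative from the global continuity lemma and on the extension of the admissible test-function class permitted by~\eqref{lateral-boundary}.
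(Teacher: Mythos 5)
Your treatment of the two easier ingredients matches the paper: the PDE is inherited from the local result via Lemma~\ref{lem:limit_function_is_sol} and the convergences in Lemma~\ref{lem:global_convergences_immediate}, and the lateral condition~\eqref{lateral-boundary} follows from weak closedness of $L^2(0,T;W^{1,2}_0(\Omega,\R^N))$ (the paper invokes Mazur/Hahn--Banach; you phrase it the same way). The difference, and the problem, is in how you attack the initial condition.

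There is a circularity in your first move. You want to apply the global continuity lemma of Section~4.2 to the limit function $u$ and obtain a representative in $C^0([0,T];L^{m+1}(\Omega,\R^N))$, so that \eqref{initial-boundary} reduces to $u(\cdot,0)=g(\cdot,0)$. But that lemma is stated for a weak solution in the sense of Definition~\ref{def:global_weak_solution}, which \emph{includes} \eqref{initial-boundary} --- precisely what you are trying to prove. Worse, the proof of the continuity lemma explicitly invokes the global Caccioppoli estimate Lemma~\ref{lem:caccioppoli_global}, whose proof passes to $\sigma\downarrow 0$ and then uses \eqref{initial-boundary} to kill the boundary term $\int_{\Omega\times\{0\}}I(u,g)$. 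So you would be using the initial condition twice to establish the initial condition. To salvage the argument you would need to re-prove a version of the continuity-in-time and Caccioppoli estimates for functions that are only known to be local weak solutions with the lateral boundary condition, which is a nontrivial detour the proposal does not supply. In addition, your step letting $\tau_1\downarrow 0$ in the \emph{local} mollified identity~\eqref{eq:mollifiedEquation_loc} requires extending the admissible test-function class from compactly supported in $\Omega_T$ to those vanishing only on $\partial\Omega\times[0,T]$; this needs~\eqref{lateral-boundary} plus a density argument and is not automatic.

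The paper avoids all of this by never invoking continuity of the limit $u$. It tests the mollified equation for each $u_i$ with $\varphi=\alpha(\u_i^{m_i}-\g^{m_i})$, where $\alpha$ is a trapezoidal cutoff rising from $0$ to $1$ on $(\delta,2\delta)$ and dropping back to $0$ on $(\tau-h,\tau)$. Mimicking the Caccioppoli computation and absorbing $|D\u_i^{m_i}|^2$ produces the bound
\begin{align*}
\frac{1}{h}\int_{\tau-h}^\tau\!\!\int_\Omega I_i(u_i,g)\,\dx\dt
-\frac{1}{\delta}\int_\delta^{2\delta}\!\!\int_\Omega I_i(u_i,g)\,\dx\dt
\le c\int_0^\tau\!\!\int_\Omega\Big(|u_i|^{m_i+1}+|g|^p+|D\g^{\tilde m}|^\beta+|\partial_t\g^{\tilde m}|^{\gamma/\tilde m}+1\Big)\dx\dt.
\end{align*}
One lets $\delta\to 0$ (the second term vanishes because each $u_i$ does satisfy \eqref{initial-boundary}), then $i\to\infty$ using the convergences from Lemma~\ref{lem:global_convergences_immediate}, then $h\to 0$ (Lebesgue points), then $\tau\to 0$. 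This produces $\int_\Omega I(u(\cdot,\tau),g(\cdot,\tau))\,\dx\to 0$ directly, which yields \eqref{initial-boundary} without ever needing to first promote $u$ to a $C^0$-in-time representative. If you want to retain your structure, the cleanest fix is to adopt the paper's trapezoidal cutoff estimate for the initial condition rather than routing through the continuity lemma.
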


\begin{proof}
The fact that $u$ satisfies the integral equality follows in the same way as in the local case in Lemma \ref{lem:limit_function_is_sol} by using strong convergences for $u_i$ and $D\u_i^{m_i}$.
%%, with one exception. \\
%%In the local case and while considering $m<1$ we used the Gagliardo-Nirenberg  inequality to obtain strong convergence $u_i \longrightarrow u$ in $L^1(\Omega_T,\mathbb R^N)$. Since we stated the global energy estimate in Lemma \ref{lem:caccioppoli_global} without an estimate for $\sup_t \int_\Omega |u|^{1+m} \d x$, this approach will not work. Instead, we can do the following:
%%For large $i$, we have
%%$1+\frac m 2 < 1+ m_i$ and thus
%%\begin{align*}
%%\iint_{\Omega_T} |u_i|^{1+ \frac m 2} \d x \d t
%%	\leq  \iint_{\Omega_T} \big( 1+ |u_i| )^{1+m_i} \d x \d t
%%	\leq C + C\iint_{\Omega_T}|u_i|^{1+m_i} \d x \d t.
%%\end{align*}
%%By the mentioned energy estimate, the right hand side is bounded uniformly in $i$. Hence, $u_i$ is bounded in $L^{1+\frac m 2}(\Omega_T, \mathbb R^N)$. We repeat the measure theoretic argument once again to get strong convergence $u_i \longrightarrow u$ in $L^1(\Omega_T,\mathbb R^N)$. Then the same arguments as in Lemma \ref{lem:limit_function_is_sol} can be applied.

Here we need to show that the limit function attains the boundary values as well. By assumption, for every $i$ we have that $\u_i^{m_i} - \g^{m_i} \in L^2(0,T;W^{1,2}_0(\Omega,\R^N))$, which is a closed and convex subset of $L^2(0,T;W^{1,2}(\Omega,\R^N))$. Further, $\u_i^{m_i} - \g^{m_i} \rightharpoonup \u^m - \g^m$ weakly in $L^2(0,T;W^{1,2}(\Omega, \mathbb R^N))$. By the Hahn-Banach theorem it then follows that $\u^m - \g^m \in L^2(0,T;W^{1,2}_0(\Omega,\R^N))$. This implies that the limit function attains the boundary values on the lateral boundary as in~\eqref{lateral-boundary}. \\
For the initial boundary we use the equation. Let us define a cut-off function in time as 
\begin{equation*}
  \alpha (t):=
  \begin{cases}
    0, &t< \delta,\\
    \frac{1}{\delta}(t - \delta), &t\in [\delta, 2\delta], \\
	1, &t\in (2\delta,\tau - h), \\
	\frac{1}{h}(\tau - t), &t\in [\tau- h, \tau], \\
	0, &t > \tau ,
  \end{cases}
\end{equation*}
where $2 \delta < \tau-h$, $\tau < T$ and $\delta, h > 0$. We will test the mollified equation by $\varphi = \alpha (\u_i^{m_i} - \g^{m_i})$. For the divergence part we obtain
\begin{align*}
\iint_{\Omega_T} \alpha \llbracket &\mathbf A(x,t,u_i, D \u_i^{m_i}) \rrbracket_\lambda \cdot D (\u_i^{m_i} - \g^{m_i})   \, \d x \d t \\
&\xrightarrow{\lambda \to 0} \iint_{\Omega_T} \alpha \mathbf A(x,t,u_i, D \u_i^{m_i})  \cdot D (\u_i^{m_i} - \g^{m_i})  \, \d x \d t.
\end{align*}
As in the proof for the Caccioppoli type estimate in Lemma \ref{lem:caccioppoli_global}, leading up to equation \eqref{eq:EnergyEstimate_LHS}, we can estimate the parabolic part as
\begin{align*}
\liminf_{\lambda\to 0} \iint_{\Omega_T} &\alpha \partial_t \llbracket u_i \rrbracket_\lambda (\u_i^{m_i} - \g^{m_i}) \, \d x\d t \\
	&\ge \iint_{\Omega_T} \alpha \partial_t \g^{m_i} (u_i-g)\, \d x \d t 
	- \iint_{\Omega_T} \partial_t \alpha I_i(u_i,g)  \, \d x \d t.
\end{align*}
By combining these estimates, again similarly to Lemma \ref{lem:caccioppoli_global}, we have
%We combine these estimates similarly to  Lemma \ref{lem:caccioppoli_global}. We thus have
\begin{align*}
- \iint_{\Omega_T} \partial_t \alpha &I_i(u_i,g)  \, \d x \d t \\
	&\leq -\iint_{\Omega_T} \alpha \partial_t \g^{m_i} (u_i-g)\, \d x \d t  \\
	&\qquad-\iint_{\Omega_T} \alpha \mathbf A(x,t,u_i, D \u_i^{m_i})  \cdot D (\u_i^{m_i} - \g^{m_i})  \, \d x \d t \\
	&\leq \iint_{\Omega_T} \alpha |\partial_t \g^{m_i} (u_i-g)|\, \d x \d t \\
	&\qquad -\nu \iint_{\Omega_T} \alpha |D\u_i^{m_i}|^2 \d x \d t
		+L \iint_{\Omega_T} \alpha |D\u_i^{m_i}| |D\g^{m_i}| \d x \d t.
\end{align*}
We rearrange terms, apply Young's inequality and absorb the resulting $|D\u_i^{m_i}|^2$ term to the left hand side. This yields
\begin{align*}
- \iint_{\Omega_T} \partial_t &\alpha I_i(u_i,g)  \, \d x \d t + \frac{\nu}{2} \iint_{\Omega_T} \alpha |D\u_i^{m_i}|^2  \, \d x \d t \\
&\le c\iint_{\Omega_T} \alpha |D\g^{m_i}|^2  \, \d x \d t + c\iint_{\Omega_T} \alpha |\partial_t \g^{m_i} (g - u_i)| \, \d x \d t,
\end{align*}
from which we can conclude
%
%\begin{align*}
%\frac{1}{h} \int_{\tau-h}^{\tau} &\int_{\Omega} I_i(u_i,g) \, \d x\d t - \frac{1}{\delta} \int_{\delta}^{2\delta} \int_{\Omega} I_i(u_i,g) \, \d x\d t \\ 
%&\le c \int_0^\tau \int_{\Omega} |D\g^{m_i}|^2  \, \d x \d t + c \int_0^\tau \int_{\Omega} |\partial_t \g^{m_i} (g - u_i)| \, \d x \d t.
%\end{align*}
%
\begin{align*}
\frac{1}{h} &\int_{\tau-h}^{\tau} \int_{\Omega} I_i(u_i,g) \, \d x\d t - \frac{1}{\delta} \int_{\delta}^{2\delta} \int_{\Omega} I_i(u_i,g) \, \d x\d t \\ 
&\le c \int_0^\tau \int_{\Omega} \left( |u_i|^{m_i +1} + |g|^{m_i+1} 
%+ |\g^{m_i}|^2
 + |D\g^{m_i}|^2 +  |\partial_t \g^{m_i}|^\frac{m_i+1}{m_i} \right)  \, \d x \d t \\
&\le  c \int_0^\tau \int_{\Omega} \left( |u_i|^{m_i +1} + |g|^p 
 + |D\g^{\tilde m}|^\beta +  |\partial_t \g^{\tilde m}|^\frac{\gamma}{\tilde m} + 1 \right)  \, \d x \d t,
\end{align*}
where $p = \max \{\gamma, \beta \tilde m\}$ by using Lemma~\ref{lem:g_conditions}. First we let $\delta \to 0$, so that the second term on left hand side vanishes by the initial condition \eqref{initial-boundary}. Then by first letting $i \to \infty$, on the left hand side we have
$$
\frac{1}{h} \int_{\tau-h}^{\tau} \int_{\Omega} I_i(u_i,g) \, \d x\d t \longrightarrow
\frac{1}{h} \int_{\tau-h}^{\tau} \int_{\Omega} I(u,g) \, \d x\d t.
$$
This follows from the boundedness of $u$ in $L^{1+m}(\Omega_T,\mathbb R^N)$, as seen in Lemma \ref{lem:caccioppoli_global}, the convergence of $u_i$ in $L^{1+m}(\Omega_T,\mathbb R^N)$ by Lemma \ref{lem:global_convergences_immediate} and from the convergence and boundedness properties of $g$, see \eqref{assumption:g} and Lemma \ref{lem:g_conditions}. \\
Now letting $h \to 0$ results in
$$
\frac{1}{h} \int_{\tau-h}^{\tau} \int_{\Omega} I(u,g) \, \d x\d t
\longrightarrow
\int_\Omega I (u (\cdot,\tau), g (\cdot,\tau))\, \d x.
$$
Eventually by taking the limit as $\tau \to 0$, we have that
$$
\int_\Omega I (u (\cdot,\tau), g (\cdot,\tau))\, \d x \longrightarrow 0.
$$
This implies that $u$ attains also the initial values in sense of~\eqref{initial-boundary}.
\end{proof}

\begin{lemma}
In the setting of Theorem \ref{theorem:global} we have
\begin{align*}
D\u_i^{m_i} &\rightarrow D\u^m \quad \text{ strongly in } L^2 (\Omega_T, \mathbb R^{Nn}),
%\u_i^{m_i} &\rightarrow \u^m \quad \text{ strongly in } L^2 \big( 0,T; W^{1,2}(\Omega,\mathbb{R}^N ) \big),
\end{align*}
as $i \longrightarrow \infty$.
\end{lemma}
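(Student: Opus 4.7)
The plan is to mirror the local proof in Lemma~\ref{lem:strong-w1p-convergence}, trading the spatial cutoff $\eta$ for the zero-trace condition \eqref{lateral-boundary}. Starting from the pointwise monotonicity inequality \eqref{monotone},
\[
\mu \iint_{\Omega_T} |D\u_i^{m_i} - D\u^m|^2 \d x \d t
	\leq \iint_{\Omega_T}\bigl(\mathbf A(x,t,u_i,D\u_i^{m_i}) - \mathbf A(x,t,u,D\u^m)\bigr) \cdot \bigl(D\u_i^{m_i} - D\u^m\bigr) \d x \d t =: \mathrm I_i - \mathrm{II}_i.
\]
The term $\mathrm{II}_i \to 0$ follows from the weak convergence $D\u_i^{m_i} \rightharpoonup D\u^m$ in $L^2(\Omega_T, \R^{Nn})$ established in Lemma~\ref{lem:global_convergences_immediate}, combined with $\mathbf A(\cdot,u,D\u^m) \in L^2(\Omega_T,\R^{Nn})$ by \eqref{growth}.

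For $\mathrm I_i$, I insert a time mollification $\mathbf W_h := \mollifytime{\u^m}{h}$ with initial value $\g^m(\cdot,0)$ as in Remark~\ref{rem:caccioppoli_glob}, splitting
\[
\mathrm I_i = \iint_{\Omega_T} \mathbf A(x,t,u_i,D\u_i^{m_i}) \cdot D(\u_i^{m_i} - \mathbf W_h) \d x \d t + \iint_{\Omega_T} \mathbf A(x,t,u_i,D\u_i^{m_i}) \cdot D(\mathbf W_h - \u^m) \d x \d t =: \mathrm I_i^{(1)} + \mathrm I_i^{(2)}.
\]
Using \eqref{growth} and the uniform $L^2$-bound on $D\u_i^{m_i}$ from Lemma~\ref{lem:caccioppoli_global}, the term $\mathrm I_i^{(2)}$ is controlled by $c\|D\mathbf W_h - D\u^m\|_{L^2(\Omega_T)}$, which tends to zero as $h \to 0$ uniformly in $i$ by Lemma~\ref{lem:mollifier}. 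For $\mathrm I_i^{(1)}$, I decompose
\[
\u_i^{m_i} - \mathbf W_h = \bigl[\u_i^{m_i} - \g^{m_i} - (\mollifytime{\u^m}{h} - \mollifytime{\g^m}{h})\bigr] + \bigl[\g^{m_i} - \mollifytime{\g^m}{h}\bigr].
\]
The first bracket lies in $L^2(0,T;W_0^{1,2}(\Omega,\R^N))$ since both $\u_i^{m_i} - \g^{m_i}$ and $\u^m - \g^m$ do (Lemma~\ref{lem:mollifier}(iii)), and so serves as a valid test function in \eqref{eq:mollifiedEquation} (for $m$ close to $m_c$ the additional integrability required in the singular case is furnished by the Gagliardo--Nirenberg inequality of Lemma~\ref{lem:global-parabolic-sobolev}, as in the local proof). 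The second bracket contributes at most $c\|D\g^{m_i} - D\mollifytime{\g^m}{h}\|_{L^2(\Omega_T)}$, which vanishes on first letting $i \to \infty$ via Lemma~\ref{lem:g_conditions} and then $h \to 0$ by Lemma~\ref{lem:mollifier}.

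Inserting this test function in \eqref{eq:mollifiedEquation} and treating the parabolic part as in the proof of Lemma~\ref{lem:caccioppoli_global} (employing the nonnegativity $(a-b)\cdot(\mathbf a^{m_i}-\mathbf b^{m_i}) \geq 0$, the identity $\partial_t\mollifytime{u_i}{\lambda} = \tfrac 1 \lambda(u_i - \mollifytime{u_i}{\lambda})$ and integration by parts in time) yields an upper bound on $\mathrm I_i^{(1)}$ in terms of quantities involving $\partial_t \g^m$, $\g^m$, $u_i$ and $u$. Passing first $i \to \infty$ by the strong convergences in Lemma~\ref{lem:global_convergences_immediate} and Lemma~\ref{lem:g_conditions}, and then $h \to 0$ via Lemma~\ref{lem:mollifier}, the right-hand side collapses to zero. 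The main obstacle is the bookkeeping in the parabolic contribution: absent a time cutoff $\xi$ with shrinking support as in the local proof, one must exploit the attained initial data $u_i(\cdot,0) = g(\cdot,0)$ (equivalent to \eqref{initial-boundary} through $u_i \in C^0([0,T];L^{1+m}(\Omega,\R^N))$) together with analogous control at $t = T$, so that the time-boundary contributions do not spoil the limit.
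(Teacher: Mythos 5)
Your overall structure (monotonicity $\Rightarrow$ split into $\mathrm{I}_i$, $\mathrm{II}_i$; kill $\mathrm{II}_i$ by weak convergence; mollify and use the equation on $\mathrm{I}_i$; replace the spatial cutoff $\eta$ by the zero-trace property \eqref{lateral-boundary}) matches the paper. But there is a genuine gap in how you drop the \emph{temporal} cutoff. The paper does not work with the raw monotonicity integral over $\Omega_T$: it first inserts a cutoff $\xi_\sigma\in W^{1,\infty}([0,T])$ equal to $1$ on $[0,T-\sigma]$ and decreasing linearly to $0$ on $[T-\sigma,T]$, and only afterward lets $\sigma\to0$. This cutoff is the device that removes all time-slice contributions at $t=T$ after the integration by parts in the parabolic term: in the paper's computation of $\mathrm{I}+\mathrm{II}$, every boundary term at $t=T$ is killed by $\xi_\sigma(T)=0$, and what remains are integrals against $\partial_t\xi_\sigma$ that cancel exactly in the limit $\lambda\to0$, $i\to\infty$, $h\to0$.

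You explicitly acknowledge that "absent a time cutoff $\xi$ \dots one must exploit \dots analogous control at $t=T$," but no such control exists in this problem. The initial condition \eqref{initial-boundary} together with the continuous representative gives $u_i(\cdot,0)=g(\cdot,0)$, which is why the boundary term at $t=0$ can be killed, but the final-time traces $u_i(\cdot,T)$ are arbitrary. The available convergences in Lemma~\ref{lem:global_convergences_immediate} are in $L^2(\Omega_T)$ or $L^{1+\varepsilon}(\Omega_T)$, none of which yields convergence of a fixed time slice, so after integrating by parts without a cutoff you are left with terms like $\int_\Omega \mollifytime{u_i}{\lambda}(\cdot,T)\cdot \mollifytime{\u^m}{h}(\cdot,T)\,\d x$ and $\frac{1}{1+m_i}\int_\Omega|\mollifytime{u_i}{\lambda}(\cdot,T)|^{1+m_i}\d x$ whose limits in $i$ you cannot identify (nor even show exist). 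Moreover, even formally passing to the limit produces a spurious term involving $\int_\Omega|g(\cdot,0)|^{1+m}\d x$ coming from the initial value built into the time mollification, which does not vanish. The fix is precisely the paper's: keep a cutoff $\xi_\sigma$ that equals $1$ at $t=0$ (so the mollification structure and \eqref{initial-boundary} handle that end) and vanishes on a short interval near $T$, establish $\iint_{\Omega\times(0,T-\sigma)}|D\u_i^{m_i}-D\u^m|^2\to 0$ for each $\sigma>0$, and conclude by monotone convergence.

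A minor additional slip: you attribute the forward mollification with initial value $\g^m(\cdot,0)$ to Remark~\ref{rem:caccioppoli_glob}, but that remark defines the \emph{reverse} mollification with value $\g^m(\cdot,T)$ at $t=T$; the forward version you want is the one introduced in the proof of continuity in the global case.
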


\begin{proof}
Let $0< \sigma < T$. Define $\xi=\xi_\sigma(t) \in W^{1,\infty}([0,T],[0,1])$ with $\xi_\sigma(t) = 1$ for $t \in [0,T-\sigma]$ and $\xi_\sigma(t) = (T-t)/\sigma$ for $t \in [T-\sigma,T]$. We can apply the monotonicity condition \eqref{monotone} to obtain
\begin{align*}
\iint_{\Omega \times [0,T-\sigma]} &|D\u_i^{m_i} - D\u^m |^2 \dx \dt \\
	 &\leq \iint_{\Omega_T} \xi |D\u_i^{m_i} - D\u^m |^2 \dx \dt \\
	&\leq c \iint_{\Omega_T} \xi \big(
		\A(x,t,u_i,D\u_i^{m_i}) - \A(x,t,u,D\u^m)
		\big)
		\cdot ( D\u_i^{m_i} - D\u^m ) \dx \dt \\
	&=c \iint_{\Omega_T} \xi \A(x,t,u_i,D\u_i^{m_i}) 
		\cdot ( D\u_i^{m_i} - D\mollifytime{\u^m}h ) \dx \dt \\
		&\quad +c \iint_{\Omega_T} \xi \A(x,t,u_i,D\u_i^{m_i}) 
		\cdot ( D\mollifytime{\u^m}h - D\u^m ) \dx \dt \\
		&\quad - c \iint_{\Omega_T} \xi \A(x,t,u,D\u^m)
		\cdot ( D\u_i^{m_i} - D\u^m ) \dx \dt
	=: \mathrm I + \mathrm{II}- \mathrm {III} .
\end{align*}
Regarding the second term, by Cauchy-Schwartz and the uniform bound from Lemma \ref{lem:caccioppoli_global} it follows that
\begin{align*}
\mathrm{II}
	&\leq c \bigg( \iint_{\Omega_T} |D\u_i^{m_i}|^2 \dx \dt \bigg)^2
	\bigg( \iint_{\Omega_T} |D\mollifytime{\u^m}h - D\u^m|^2 \dx \dt \bigg)^2 \\
	&\leq c \bigg( \iint_{\Omega_T} |D\mollifytime{\u^m}h - D\u^m|^2 \dx \dt \bigg)^2.
\end{align*}
This vanishes as $h \downarrow 0$ by the properties of the time mollification. \\
For the third term on the right hand side, notice that the growth condition \ref{growth} implies $\A(x,t,u,D\u^m) \in L^2(\Omega_T,\mathbb R^{Nn})$. Thus the weak convergence $D\u_i^{m_i} \rightharpoonup D\u^m$ in $L^2(\Omega_T,\mathbb R^{Nn})$ implies that $\mathrm {III}$ vanishes as $i \rightarrow \infty$. \\
For the first term, we must use the equation. We begin by adding and subtracting both $\xi \A(x,t,u_i,D\u_i^{m_i}) D\g^{m_i}$ and $\xi \A(x,t,u_i,D\u_i^{m_i}) D\mollifytime{\g^m}h$. The term containing $D\g^{m_i}-D\mollifytime{\g^m}h$ will vanish due to the uniform bound from Lemma \ref{lem:caccioppoli_global} and the conditions for $g$, see \eqref{assumption:g} and Lemma \ref{lem:g_conditions}. \\
For the other terms, we test the mollified equation \eqref{eq:mollifiedEquation} for $u_i$ against the test function $\varphi = \xi \big( \u_i^{m_i} - \g^{m_i} + \mollifytime{\g^m}h - \mollifytime{\u^m}h \big)$.
The following calculations are similar to the ones in Lemma \ref{lem:strong-w1p-convergence}: The divergence part of the mollified equation \eqref{eq:mollifiedEquation} equals
\begin{align*}
\iint_{\Omega_T} &\xi \mollifytime {\A(x,t,u_i,D\u_i^{m_i})} \lambda
	D \big( \u_i^{m_i} - \g^{m_i} 
		+ \mollifytime{\g^m}h - \mollifytime{\u^m}h \big) \dx\dt \\
	&\quad \longrightarrow 
	\iint_{\Omega_T} \xi \A(x,t,u_i,D\u_i^{m_i})
	D \big( \u_i^{m_i} - \g^{m_i} 
		+ \mollifytime{\g^m}h - \mollifytime{\u^m}h \big) \dx\dt
\end{align*}
as $\lambda \downarrow 0$. The right hand side of the mollified equation \eqref{eq:mollifiedEquation} vanishes, since $\varphi(0)=0$ due to the initial conditions \eqref{initial-boundary} being fulfilled for all $u_i$ by assumption and since the time mollification is zero at $t=0$ by definition.  \\
The remaining parabolic part, after moving it to the other side of the equation, is given by
\begin{align*}
-\iint_{\Omega_T} \xi \partial_t \mollifytime {u_i} \lambda
		\big( \u_i^{m_i} - \g^{m_i}
		+\mollifytime{\g^m}h - \mollifytime{\u^m}h  \big) \dx\dt.
\end{align*}
The term containing the difference $\g^{m_i} - \mollifytime{\g^m}h$ will vanish, again due to the uniform bound from Lemma \ref{lem:caccioppoli_global} and the conditions for $g$. We thus concentrate on the remaining term
\begin{align*}
\iint_{\Omega_T} \xi &\partial_t \mollifytime {u_i} \lambda
		\big( -\u_i^{m_i} + \mollifytime{\u^m}h  \big) \dx\dt
		\leq \iint_{\Omega_T} \xi \partial_t \mollifytime {u_i} \lambda
		\big( -\mollifytime {\u_i}\lambda ^{m_i}+ \mollifytime {\u^m}h  \big) \dx\dt 
		= \mathrm{I}+\mathrm{II}.
\end{align*}
For $\mathrm{I}$ we perform integration by parts. The appearing boundary terms vanish, since the mollification of $u_i$ vanishes at time $t=0$ by definition, while $\xi$ vanishes at $t=T$. This implies
\begin{align}\label{eq:grad_L2_conv_eq}
\begin{aligned}
\mathrm{I}
	&= - \frac{1}{1+m_i} \iint_{\Omega_T}  \xi \partial_t |\mollifytime {u_i} \lambda|^{1+m_i} \dx \dt\\
	&= \frac{1}{1+m_i} \iint_{\Omega_T}  \partial_t\xi |\mollifytime {u_i} \lambda|^{1+m_i} \dx \dt \\
	%& \stackrel{\lambda \downarrow 0}\longrightarrow
	%\frac{1}{1+m_i} \iint_{\Omega_T} \partial_t\xi |u_i|^{1+m_i}  \dx \dt\\
	&%\stackrel{i \rightarrow \infty}
	\longrightarrow
	\frac{1}{1+m} \iint_{\Omega_T}  \partial_t\xi |u|^{1+m} \dx \dt,
\end{aligned}
\end{align}
by first letting $\lambda \downarrow 0$, then $i \rightarrow \infty$.
%The term $\mathrm{III}+\mathrm{IV}$ vanishes as $h \downarrow 0$ due to the properties of the mollification, combined with the facts that $u_i \in L^{1+m_i}(\Omega_T,\mathbb R^N)$, while $\u^m \in L^{(1+m_i)/m_i + \varepsilon}(\Omega_T,\mathbb R^N)$. 
For the remaining term we perform integration by parts:
\begin{align*}
\mathrm{II}
	&=\iint_{\Omega_T}\xi \partial_t \mollifytime {u_i} \lambda\mollifytime{\u^{m}}h \\
	&= -\iint_{\Omega_T} \partial_t \xi \mollifytime {u_i} \lambda\mollifytime{\u^{m}}h
	+ \xi \mollifytime {u_i} \lambda \partial_t \mollifytime{\u^{m}}h
	\dx \dt \\
	&\stackrel{\lambda \downarrow 0}\longrightarrow
	-\iint_{\Omega_T} \partial_t \xi u_i \mollifytime{\u^{m}}h
	+ \xi u_i \partial_t \mollifytime{\u^{m}}h
	\dx \dt \\	
	&\stackrel{i \rightarrow \infty}\longrightarrow
	-\iint_{\Omega_T} \partial_t \xi u \mollifytime{\u^{m}}h
	+ \xi u \partial_t \mollifytime{\u^{m}}h
	\dx \dt  \\
	&\leq 	-\iint_{\Omega_T} \partial_t \xi u \mollifytime{\u^{m}}h
	+ \xi \mollifytime {\u^m} h^{1/m} \partial_t \mollifytime{\u^{m}}h
	\dx \dt  \\
	&= -\iint_{\Omega_T} \partial_t \xi u \mollifytime{\u^{m}}h
	+ \frac{m}{1+m}\xi \partial_t |\mollifytime{\u^{m}}h|^{\frac 1 m + 1}
	\dx \dt  \\
	&=-\iint_{\Omega_T} \partial_t \xi u \mollifytime{\u^{m}}h 
	- \frac{m}{1+m} \partial_t \xi |\mollifytime{\u^{m}}h|^{\frac 1 m + 1}
	\dx \dt  \\
	&\stackrel{h \downarrow 0}\longrightarrow
	\iint_{\Omega_T} -\partial_t \xi |u|^{1+m}
	+ \frac{m}{1+m} \partial_t \xi |u|^{1+m}
	\dx \dt.
\end{align*}
These terms cancel out together with the term from \eqref{eq:grad_L2_conv_eq}. Collecting these results yields the claim.
\end{proof}

This completes the proof of Theorem \ref{theorem:global}. Again, we proved convergence for a subsequence. In contrast to the local setting, in this case we are not given any weak limit of the original sequence by assumption. Thus we cannot determine the convergence of the original sequence in this generality. However, as mentioned in the Remark~\ref{rem:uniqueness}, in the model case $\A(x,t,u,D\u^m) = D\u^m$ one can show uniqueness of weak solutions as in  V\'azquez \cite[Theorem 5.3]{Vazquez} by using suitable lower bounds for $(\u^m-\v^m)\cdot (u-v)$. In this way one can conclude uniqueness of the limit and thus convergence for the whole sequence.

\end{document}